\newtheorem{thm}{Theorem}
\newtheorem{lem}{Lemma}
\newtheorem{clm}{Claim}
\newcommand\RR{{\mathbb R}}
\newcommand\myitemsep{\setlength{\itemsep}{-2pt}}
\newenvironment{proof}%
{\noindent{\emph{Proof.}\ }}%
{\hfill$\square$\par\bigskip}%
\begin{document}
\setlength{\unitlength}{1mm}

\title{Minimal obstructions for 1-immersions and hardness of 1-planarity testing}

\author{ \makebox[10cm]{}
\\ Vladimir P. Korzhik\thanks{This paper was done while the author
visited Simon Fraser University.}\\
National University of Chernivtsi, Chernivtsi\\
and\\
Institute of Applied Problems of Mechanics and Mathematics\\
of National Academy of Science of Ukraine, Lviv\\
Ukraine
 \and \makebox[10cm]{}
\\ Bojan Mohar\thanks{Supported in part by the
  Research Grant P1--0297 of ARRS (Slovenia), by an NSERC Discovery Grant (Canada)
  and by the Canada Research Chair program.}~\thanks{On leave from:
  IMFM \& FMF, Department of Mathematics, University of Ljubljana, Ljubljana,
  Slovenia.} \\
Department of Mathematics \\
Simon Fraser University \\
Burnaby, B.C. V5A 1S6 \\ Canada}

\date{}

\maketitle

\newpage
\begin{abstract}
A graph is \emph{$1$-planar} if it can be drawn on the
plane so that each edge is crossed by no more than
one other edge (and any pair of crossing edges cross only once). A non-1-planar graph $G$ is
\emph{minimal} if the graph $G-e$ is 1-planar for
every edge $e$ of $G$. We construct two infinite
families of minimal non-1-planar graphs and show that
for every integer $n\geq 63$, there are at least
$2^{(n-54)/4}$ nonisomorphic minimal
non-1-planar graphs of order $n$.
It is also proved that testing 1-planarity is NP-complete.
\end{abstract}

\vspace{1cm}

\noindent\textbf{Running head:}\\
Obstructions for 1-immersions\\

\vspace{1cm}

\noindent\textbf{Corresponding author:} Bojan Mohar\\

\vspace{1cm}

\noindent\textbf{AMS classifications:}\\
05C10, 05B07.

\vspace{1cm}

\noindent\textbf{Keywords:}\\
Topological graph, crossing edges, 1-planar graph, 1-immersion.

\newpage

\section{Introduction}\label{Intr}

A graph drawn in the plane is \emph{$1$-immersed\/} in the plane
if any edge is crossed by at most one other edge (and any pair of crossing edges cross only once). A graph is
\emph{$1$-planar} if it can be 1-immersed into the plane.
It is easy to see that if a graph has 1-immersion in which
two edges $e,f$ with a common endvertex cross, then the
drawing of $e$ and $f$ can be changed so that these two
edges no longer cross. Consequently, we may assume that
adjacent edges are never crossing each other and that
no edge is crossing itself. We take this assumption as
a part of the definition of 1-immersions since this limits
the number of possible cases when discussing 1-immersions.

The notion of 1-immersion
of a graph was introduced by Ringel \cite{R} when
trying to color the vertices and faces of a plane
graph so that adjacent or incident elements receive
distinct colors. In the last two decades this class of
graphs received additional attention because of its
relationship to the family of \emph{map graphs}, see
\cite{CGP,CGP2} for further details.

Little is known about 1-planar graphs. Borodin
\cite{B1,B2} proved that every 1-planar graph is
6-colorable. Some properties of maximal 1-planar
graphs are considered in \cite{S}. It was shown in
\cite{BKRS} that every 1-planar graph is acyclically
20-colorable. The existence of subgraphs of bounded
vertex degrees in 1-planar graphs is investigated in
\cite{FM}. It is known (see \cite{BSW,Ch1,Ch2}) that a
1-planar graph with $n$ vertices has at most $4n-8$
edges and that this upper bound is tight. In the
paper \cite{CK} it was observed that the class of
1-planar graphs is not closed under the operation of
edge-contraction.

Much less is known about non-1-planar graphs.
The basic question is how to recognize 1-planar graphs.
This problem is clearly in NP, but it is not clear at all
if there is a polynomial time recognition algorithm.
We shall answer this question by proving that 1-planarity
testing problem is NP-complete.

The recognition problem is closely related to the study of
minimal obstructions for 1-planarity.
A graph $G$ is said to be a \emph{minimal} non-1-planar graph
(\emph{{\rm MN}-graph}, for short) if $G$ is not 1-planar, but
$G-e$ is 1-planar for every edge $e$ of $G$.
An obvious question is:

\begin{quote}
How many MN-graphs are there? Is their number finite?
If not, can they be characterized?
\end{quote}

\noindent
The answer to the first question is not hard: there are
infinitely many.
This was first proved in \cite{K}.
Here we present two additional simple arguments implying the
same conclusion.

\medskip

{\bf Example 1.}
Let $G$ be a graph such that
$t = \lceil{\rm cr}(G)/|E(G)|\rceil - 1\geq 1$,
where ${\rm cr}(G)$ denotes the crossing
number of $G$. Let $G_t$ be the graph obtained from
$G$ by replacing each edge of $G$ by a path of length
$t$. Then $|E(G_t)| = t|E(G)| < {\rm cr}(G) = {\rm
cr}(G_t)$. This implies that $G_t$ is not 1-planar.
However, $G_t$ contains an MN-subgraph $H$. Clearly,
$H$ contains at least one subdivided edge of $G$ in
its entirety, so $|V(H)| > t$. Since $t$ can be
arbitrarily large (see, for example, the well-known lower bound on ${\rm cr}(K_n)$), this shows that there are
infinitely many MN-graphs.

\medskip

Before giving the next example, it is worth noticing that 3-cycles must be embedded in a planar way in every 1-immersion of a graph in the plane.

\medskip

{\bf Example 2.}
Let $K\in\{ K_5, K_{3,3} \}$ be one of the Kuratowski graphs.
For each edge $xy\in E(K)$, let $L_{xy}$ be a 5-connected triangulation
of the plane and $u,v$ be adjacent vertices of $L_{xy}$ whose degree
is at least 6. Let $L'_{xy} = L_{xy}-uv$. Now replace each edge $xy$
of $K$ with $L'_{xy}$ by identifying $x$ with $u$ and $y$ with $v$.
It is not hard to see that the resulting graph $G$ is not 1-planar
(since two of graphs $L'_{xy}$ must ``cross each other'', but that is
not possible since they come from 5-connected triangulations).
Again, one can argue that they contain large MN-graphs.

\medskip

The paper \cite{K} and the above examples prove the existence of infinitely
many MN-graphs but do not give any concrete examples.
They provide no information on properties of MN-graphs.
Even the most basic question if there are infinitely many MN-graphs
whose minimum degree is at least three cannot be answered by considering
these constructions. In \cite{K},
two specific MN-graphs of order 7 and 8, respectively, are given.
One of them, the graph $K_7-E(K_3)$, is the unique
7-vertex MN-graph and since all 6-vertex graphs are
1-planar, the graph $K_7-E(K_3)$ is the MN-graph with
the minimum number of vertices. Surprisingly enough, the
two MN-graphs in \cite{K} are the only explicit MN-graphs known
in the literature.

The main problem when trying to construct 1-planar
graphs is that we have no characterization of
1-planar graphs. The set of 1-planar graphs is not
closed under taking minors, so 1-planarity can not be
characterized by forbidding some minors.

In the present paper we construct two explicit infinite
families of MN-graphs whose minimum degree is at least three
and, correspondingly, we give
two different approaches how to prove that a graph
has no plane 1-immersion.

In Sect.~\ref{Sec:2} we construct MN-graphs based
on the Kuratowski graph $K_{3,3}$. To obtain them,
we replace six edges of $K_{3,3}$ by some
special subgraphs. The minimality of these examples is easy
to verify, but their non-1-planarity needs long and delicate
arguments. Using these MN-graphs, we show that for
every integer $n\geq 63$, there are at least $2^{(n-54)/4}$
nonisomorphic minimal non-1-planar graphs of order $n$.
In Sect.~\ref{Sec:3} we describe a class of
3-connected planar graphs that have no plane
1-immersions with at least one crossing point
(\emph{{\rm PN}-graphs}, for short). Every
PN-graph has a unique plane 1-immersion, namely, its
unique plane embedding. Hence, if a
1-planar graph $G$ contains a PN-graph $H$ as a subgraph,
then in every plane 1-immersion of $G$ the subgraph
$H$ is 1-immersed in the plane in the same way.

Having constructions of PN-graphs, we can construct
1-planar and non-1-planar graphs with some desired
properties: 1-planar graphs that have exactly $k>0$
different plane 1-immersions; MN-graphs, etc.

In Sect.~\ref{Sec:4} we construct MN-graphs based
on PN-graphs. Each of these MN-graphs $G$ has as a
subgraph a PN-graph $H$ and the unique plane
1-immersion of $H$ prevents 1-immersion of the remaining part
of $G$ in the plane.

Despite the fact that minimal obstructions for 1-planarity
(i.e., the MN-graphs) have diverse structure, and despite
the fact that discovering 1-immer\-sions of specific graphs
can be very tricky, it turned out to be a hard problem
to establish hardness of 1-planarity testing.
A solution is given in Sect.~\ref{sect:NPC}, where
we show that 1-planarity testing is NP-complete,
see . The proof is geometric in the sense
that the reduction is from 3-colorability of planar graphs
(or similarly, from planar 3-satisfiability).

In Sect.~\ref{Sec:6} we show how the proof of Theorem~\ref{thm:NPC1} can be modified to obtain a proof that $k$-planarity testing for multigraphs is NP-complete.

An extended abstract of this paper was published in Graph Drawing 2008 \cite{GD08}.

\section{Chain graphs based on $K_{3,3}$}
\label{Sec:2}

Two cycles of a graph are \emph{adjacent} if they share
a common edge. If a graph $G$ is drawn in the plane, then we say that
a vertex $x$ lies \emph{inside} (resp.~\emph{outside}) an
embedded (that is, non-self-intersecting) cycle $C$, if $x$
lies in the interior (resp.\ exterior) of $C$, and
does not lie on $C$. Having two embedded adjacent
cycles $C$ and $C'$, we say that $C$ lies inside
(resp. outside) $C'$ if every point of $C$ either
lies inside (resp. outside) $C'$ or lies on $C'$.
From this point on, by a 1-immersion of a graph we mean a plane
1-immersion. We assume that in 1-immersions,
adjacent edges do not cross each other and no edge
crosses itself. Thus, every 3-cycle of a 1-immersed graph is
embedded in the plane. Hence, given a 3-cycle of a
1-immersed graph, we can speak about its interior and
exterior. We say that an
embedded cycle \emph{separates} two vertices $x$ and
$y$ on the plane, if one of the vertices lies inside
and the other one lies outside the cycle. Two
edges $e$ and $e'$ of a graph $G$ \emph{separate}
vertices $x$ and $y$ of the graph if $x$ and $y$
belong to different connected components of the graph
$G-e-e'$.

Throughout the paper we will deal with 1-immersed graphs.
When an immersion of a graph $G$ is clear from the context,
we shall identify vertices, edges, cycles and subgraphs of
$G$ with their image in $\RR^2$ under the 1-immersion.
Then by a face of a 1-immersion of $G$ we mean any connected component
of $\RR^2\setminus G$.

By using M\"obius transformations combined with homeomorphisms
of the plane it is always possible to exchange the interior and
exterior of any embedded cycle and it is possible to change any
face of a given 1-immersion into the outer face of a 1-immersion.
Formally, we have the following
observation (which we will use without referring to it every time):

\medskip

 (A) Let $C$ be a cycle of a graph $G$. If $G$ has a 1-immersion $\varphi$
in which $C$ is embedded, then $G$ has a 1-immersion $\varphi'$ with the same
number of crossings as $\varphi$, in which
$C$ is embedded and all vertices of $G$, which lie inside $C$ in $\varphi$,
lie outside $C$ in $\varphi'$ and vice versa.

\medskip
Now we begin describing a family of MN-graphs based on the
graph $K_{3,3}$.

By a \emph{link} $L(x,y)$ connecting two vertices $x$
and $y$ we mean any of the graphs shown in
Fig.~\ref{Fig:2.1} where
$\{z,\overline{z}\}=\{x,y\}$. We say that the
vertices $x$ and $y$ are incident with the link.
The links in Figs.~\ref{Fig:2.1}(A) and (B) are called
\emph{A-link} and \emph{B-link}, respectively, and the one
in Fig.~\ref{Fig:2.1}(C) is called a \emph{base link}.
Every link has a \emph{free cycle}: both 3-cycles in
an A-link are its free cycles, while every B-link or base
link has exactly one free cycle (the cycle indicated by thick
lines in Fig.~\ref{Fig:2.1}).

\begin{figure}
\centering
\includegraphics[width=0.5\textwidth]{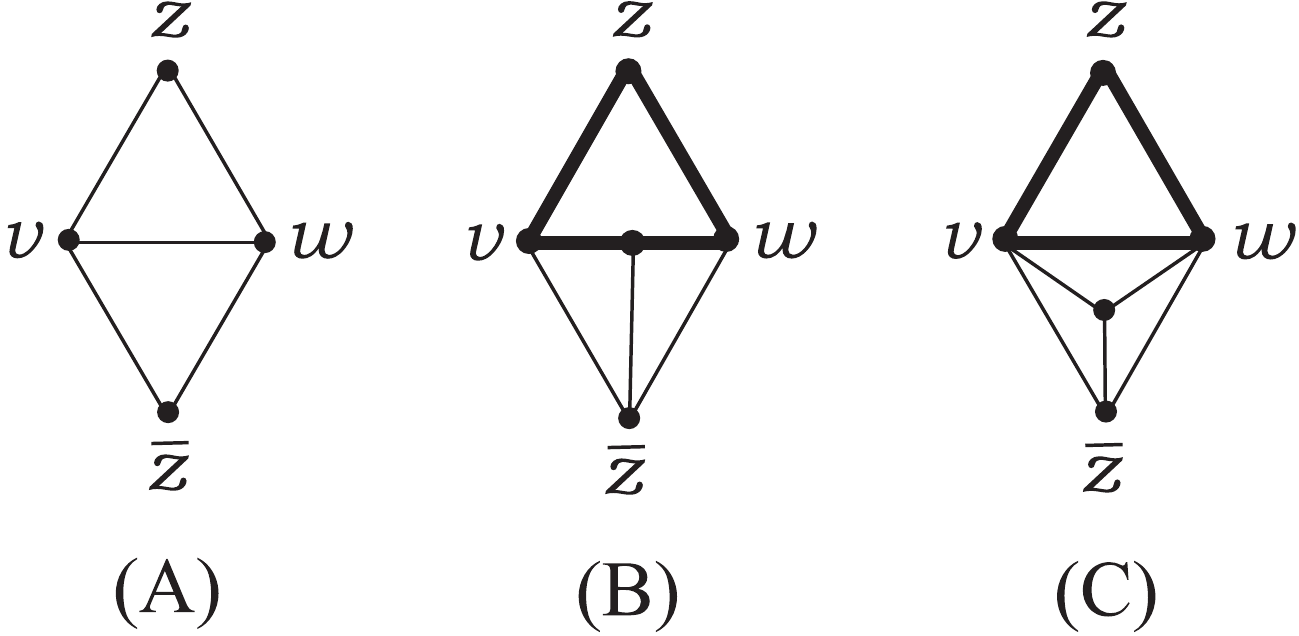}
\caption{Links.}
 \label{Fig:2.1}
  \end{figure}

By an A-\emph{chain} of length $n\geq 2$ we mean the
graph shown in Fig.~\ref{Fig:2.2}(a). By a
B-\emph{chain} of length $n\geq 2$ we mean the graph
shown in Fig.~\ref{Fig:2.2}(c) and, for $n\geq 3$, every graph
obtained from that graph in the following way: for
some integers $h_1,h_2,\ldots,h_t$, where $t\geq 1$
and $1\leq h_1<h_2<\cdots<h_t\leq n-2$, we replace the link
at the left of Fig.~\ref{Fig:2.2}(e) by the link shown
at the right, for $i=1,2,\ldots,t$. Note that, by definition,
A- and B-chains have length at least 2. We say that the
chains in Figs.~\ref{Fig:2.2}(a) and (c) connect the
vertices $v(0)$ and $v(n)$ which are called the
\emph{end vertices} of the chain. Two chains are
\emph{adjacent} if they share a common end vertex.
A-chains and B-chains will be represented as shown
in Figs.~\ref{Fig:2.2}(b) and (d), respectively,
where the arrow points to the end vertex incident
with the base link. The vertices
$v(0),v(1),v(2),\ldots,v(n)$ are the \emph{core
vertices} of the chains. Every free cycle of a link
contains exactly one core vertex. The two edges of a
free cycle $C$ incident to the core vertex
are the \emph{core-adjacent} edges of $C$.
It is easy to see that two edges $e$ and $e'$ of a
chain separate the end vertices of the chain if and
only if the edges are the core-adjacent edges of a
free cycle of a link of the chain.

\begin{figure}
\centering
\includegraphics[width=0.9\textwidth]
{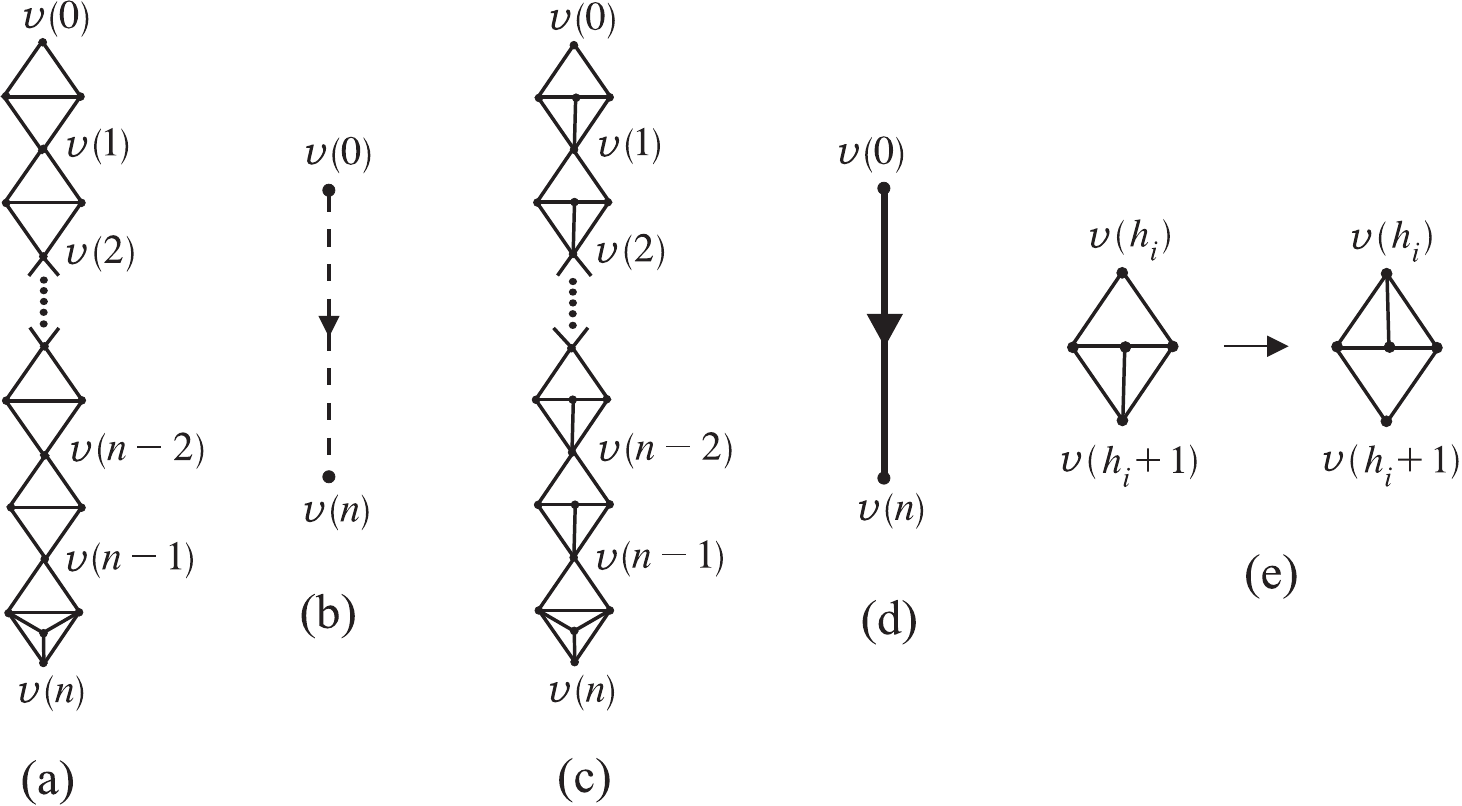} \caption{A- and B-chains.}
 \label{Fig:2.2}
  \end{figure}

By a \emph{subchain} of a chain shown in
Figs.~\ref{Fig:2.2}(a) and (c) we mean a subgraph of
the chain consisting of links incident with $v(i)$
and $v(i+1)$ for all $i=m,m+1,\ldots,m'-1$ for some
$0\leq m<m'\leq n$. We say that the subchain \emph{connects}
the vertices $v(m)$ and $v(m')$.

A \emph{chain graph} is any graph obtained from
$K_{3,3}$ by replacing three of its edges incident with
the same vertex by A-chains and three edges incident
with another vertex in the same chromatic class by B-chains, where the chains can have
arbitrary lengths $\geq 2$. These changes are to be
made as shown in Fig.~\ref{Fig:2.3}(a). The vertices
$\Omega(1)$, $\Omega (2)$, and $\Omega (3)$ are the
\emph{base vertices} of the chain graph. The edges
joining the vertex $\Omega$ to the base vertices are
called the $\Omega$-\emph{edges}.

\begin{figure}[t!]
\centering
\includegraphics[width=0.7\textwidth]
{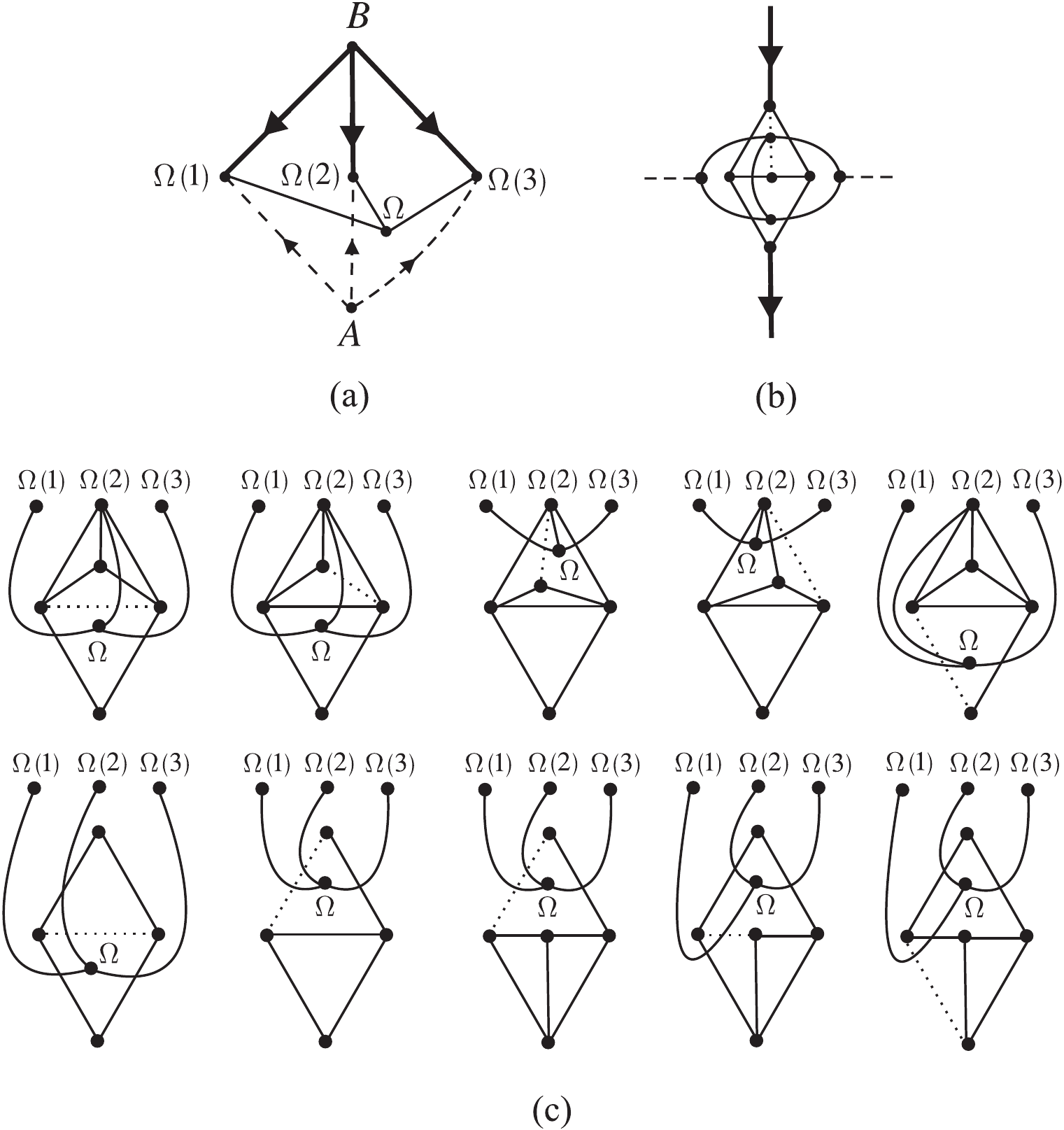} \caption{A chain graph $G$ and 1-planarity of the graph $G-e$.}
 \label{Fig:2.3}
  \end{figure}

We will show that every chain graph is an MN-graph.

\begin{lem}
\label{lem:1.1}
Let\/ $G$ be a chain graph and $e\in E(G)$.
Then $G-e$ is\/ $1$-planar.
\end{lem}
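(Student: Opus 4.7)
\medskip
\noindent\emph{Proof plan.}
The plan is to exhibit, for every edge $e \in E(G)$, an explicit 1-immersion of $G-e$. First I would reduce the number of cases by relabeling: since the indices $1,2,3$ of the base vertices $\Omega(1),\Omega(2),\Omega(3)$ play symmetric roles, one may assume that either $e$ is the $\Omega$-edge $\Omega\Omega(1)$, or $e$ lies inside the A-chain or the B-chain incident with $\Omega(1)$.

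In the $\Omega$-edge case, $K_{3,3}-\Omega\Omega(1)$ is planar, so I would fix a planar embedding of this $K_{3,3}$-skeleton and replace each of the remaining six chain-edges by the corresponding chain, drawn inside a thin tubular neighborhood of the edge it replaces. Each link in Fig.~\ref{Fig:2.1} admits an obvious 1-immersion inside a small disk, and concatenating these along a chain yields a 1-immersion of the whole chain. The two remaining $\Omega$-edges are drawn as simple arcs. No two edges from different chains cross, and within each chain crossings occur only inside individual links, so every edge is crossed at most once.

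In the chain-edge case, $e$ lies inside a chain $C$ running from $\Omega(1)$ to a fixed vertex $w$ of the opposite chromatic class. Here I would exploit the gap left by the missing edge: starting from a 1-immersion of $K_{3,3}$ whose unique crossing involves the $K_{3,3}$-edge $w\Omega(1)$ that $C$ replaces, replace $w\Omega(1)$ by the broken chain $C-e$ in such a way that the crossing point of the underlying $K_{3,3}$ drawing falls inside the gap produced by the removal of $e$; the remaining five chains and the three $\Omega$-edges are drawn in tubular neighborhoods as in the previous case. Depending on whether $e$ is a core-adjacent edge of a free cycle, a non-core-adjacent edge of a free cycle, the edge shared between two adjacent links, or a further edge inside a link, one obtains a small finite list of subcases, each handled by producing an explicit picture in the spirit of Fig.~\ref{Fig:2.3}(b).

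The main obstacle will be checking, in each chain-edge subcase, that the local drawing of $C-e$ around the gap can be arranged so that the two strands emerging from the gap thread through the underlying $K_{3,3}$-crossing without forcing any edge to be crossed more than once. This is a local planarity verification that reduces to a finite case analysis on the position of $e$ inside its link, and is largely encapsulated by the accompanying figure.
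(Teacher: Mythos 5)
Your reduction by symmetry and your treatment of the $\Omega$-edge case are fine (the paper simply observes that $G-e$ is planar there). The problem is with the framework you impose on the chain-edge case: you insist on a drawing whose underlying $K_{3,3}$ has a \emph{unique} crossing, located on the edge $w\Omega(1)$ that the broken chain $C$ replaces. The other edge of such a crossing is a single $K_{3,3}$-edge not adjacent to $w\Omega(1)$, hence either an $\Omega$-edge or another chain. Now recall the paper's observation that the only two-edge cuts of a chain separating its end vertices are the pairs of core-adjacent edges of free cycles. Consequently, if the deleted edge $e$ is \emph{not} a core-adjacent edge of a free cycle, then $C-e$ still has no cut edge separating its ends, so any arc passing from one side of $C-e$ to the other must cross at least two of its edges; a single $\Omega$-edge can be involved in at most one crossing and therefore cannot realize your $K_{3,3}$-crossing. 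You are then forced to take the second crossing edge to be a whole chain threaded through the gap, which is exactly the delicate verification you defer to ``the accompanying figure'' --- and it is far from automatic: the paper carries out a chain-through-chain crossing only in the single case where $e$ is the middle edge of a B-link (Fig.~\ref{Fig:2.3}(b)).

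For every other non-core-adjacent position of $e$, the paper's construction is one your framework excludes: the vertex $\Omega$ is placed \emph{inside} a face of the broken link $L$, and all three $\Omega$-edges escape by each crossing one edge of $L$ (Fig.~\ref{Fig:2.3}(c)). At the level of the underlying $K_{3,3}$ this is a drawing in which the edge $w\Omega(1)$ is crossed up to three times by the three edges at $\Omega$, not a one-crossing drawing. So the plan as stated either fails outright (single-edge crossing partner) or leaves the bulk of the case analysis --- precisely the content of the lemma --- unverified and likely unachievable within your chosen template. To repair it you should drop the ``unique crossing of $K_{3,3}$'' normalization and instead argue locally: place $\Omega$ in the face of $L-e$ created by the deletion and check, link type by link type and edge position by edge position, that three boundary edges of that face can be crossed so as to reach all three base vertices.
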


\begin{proof}
If $e$ is an $\Omega$-edge, then $G-e$ is planar and hence 1-planar.
Suppose now that $e$ is not an $\Omega$-edge.
By symmetry, we may assume that $e$ belongs
to an A- or B-chain incident to $\Omega(2)$. If $e$ is the ``middle''
edge of a B-link, then Fig.~\ref{Fig:2.3}(b) shows that the corresponding
B-chain can be crossed by an A-chain, and it is easy to see that this can be
made into a 1-immersion of $G-e$.
In all other cases, 1-immersions are
made by crossing the link $L$ whose edge $e$ is deleted with the edges incident
with the vertex $\Omega$. The upper row in Fig.~\ref{Fig:2.3}(c)
shows the cases when $L$ is a base link. The lower row covers the cases
when $L$ is an A-link or a B-link.
The edge $e$ is shown in all cases as the dotted edge.
\end{proof}

Our next goal is to show that chain graphs are not 1-planar.
In what follows we let $G$ be a chain graph and $\varphi$
a (hypothetical) 1-immersion of $G$.

\begin{lem}
\label{lem:2.1}
Let $\varphi$ be a $1$-immersion of a chain graph\/ $G$ such that
the number of crossings in $\varphi$ is minimal among all\/ $1$-immersions of $G$.
If\/ $L$ is a link in an A- or B-chain of\/ $G$, then no two edges of\/ $L$
cross each other in $\varphi$.
\end{lem}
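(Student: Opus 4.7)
The plan is to argue by contradiction. Suppose $\varphi$ achieves the minimum number of crossings over all 1-immersions of $G$, and that some link $L$ contains two edges $e,e'$ which cross in $\varphi$. Since adjacent edges never cross in a 1-immersion, $e$ and $e'$ must be vertex-disjoint in $L$; and since each edge is involved in at most one crossing, neither $e$ nor $e'$ is crossed by any further edge of $G$. The goal is to exhibit a 1-immersion of $G$ with strictly fewer crossings, contradicting minimality.

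I would proceed by case analysis over the three link types in Fig.~\ref{Fig:2.1} (A-link, B-link, base link). Each link has only a handful of edges, so only a small number of vertex-disjoint pairs $\{e,e'\}$ need to be checked. Throughout the analysis I rely on two structural features of $\varphi$. First, every 3-cycle of $G$ is embedded in $\varphi$ — in particular, the free cycle(s) of $L$ are Jordan curves bounding open disks. Second, since $e$ and $e'$ are crossed only by each other, there is a small open disk $D$ containing their crossing point whose boundary meets $G$ only in the interior of edges of $L$ other than $e,e'$, and in which the unique crossing of $\varphi$ is the one between $e$ and $e'$.

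For each admissible pair $\{e,e'\}$, the embedded triangles of $L$ (together with any additional edges of $L$ whose images lie near the crossing) partition the neighbourhood of the crossing into a few regions whose arrangement is fixed up to homeomorphism. I would then either (i) argue directly that the assumed crossing forces one of the triangles of $L$ not to be embedded, which is an immediate contradiction; or (ii) re-route $e$ within $D$ along a short alternative path lying on the other side of the relevant embedded triangle, which removes the crossing with $e'$. Since nothing else in $D$ crosses $e$ or $e'$, the re-routed drawing remains a 1-immersion, it agrees with $\varphi$ outside $D$, and it has one fewer crossing, contradicting the minimality of $\varphi$.

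The main obstacle will be carrying out the geometric re-routing argument uniformly across the link types, in particular making sure in case (ii) that the modified edge can be confined to the disk $D$ without meeting any edge of $G\setminus L$ that happens to pass through the region around $L$. This is where I will use that the free-cycle triangles of $L$ are embedded: they carve out a \textquotedblleft safe\textquotedblright{} region adjacent to the crossing through which $e$ can be pulled, so no interaction with edges outside $L$ is created.
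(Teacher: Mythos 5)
Your overall framework (minimal counterexample, restriction to disjoint pairs within a link, isolation of the crossing in a disk $D$, case analysis over link types) matches the paper's, but the mechanism you propose for eliminating the crossing does not work, and this is the heart of the lemma. If $e=ab$ and $e'=cd$ cross, then the disk $D$ you describe has the four endpoints on its boundary in the interleaved cyclic order $a,c,b,d$; the chord $e'$ then separates $a$ from $b$ inside $D$, so \emph{every} arc from $a$ to $b$ confined to $D$ must cross $e'$. Hence your option (ii), ``re-route $e$ within $D$'', is topologically impossible: no redrawing of a single edge inside $D$ can remove the crossing. Your option (i) also never fires as stated, because a triangle of $L$ fails to be embedded only if two of its own (necessarily adjacent) edges cross, which is already excluded by the convention on 1-immersions; a crossing of two disjoint edges does not by itself destroy the embeddedness of any 3-cycle.

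What the paper does instead is relocate \emph{vertices}, not re-route edges, and it first prunes the case list with a reduction you are missing: if the crossing pair consists of an edge at $z$ and an edge at $\bar z$ (the two vertices joined by the link), then the entire link $L(z,\bar z)$ can be redrawn inside $D$ with no crossings at all, since $z$ and $\bar z$ both lie on $\partial D$ and $L$ is planar with these two vertices on a common face. This leaves only five configurations. In three of them the link has a 3-valent vertex $u$ all of whose neighbours lie on $\partial D$, and the crossing is killed by moving $u$ into $D$ across $e$ (redrawing its three incident edges); in the remaining two, the vertices $u$ and $v$ on $\partial D$ each have their single outside neighbour equal to $\bar z$, so one can swap the positions of $u$ and $v$ and redraw everything inside $D$, again reducing the crossing count. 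You would need some such vertex-moving device (and an argument that the moved vertices' outside connections survive) to complete the proof; as written, your plan stalls exactly at the step where the crossing is supposed to disappear.
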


\begin{proof}
The first thing to observe is that whenever edges $ab$ and $cd$ cross,
there is a disk $D$ having $a,c,b,d$ on its boundary, and $D$ contains these
two edges but no other points of $G$. In 1-immersions with minimum number of
crossings this implies that no other edges between the vertices $a,c,b,d$
are crossed. Similarly, if $L=L(z,\bar z)$ is a link in a chain, and an edge
incident with $z$ crosses an edge incident with $\bar z$, the whole link $L$
can be drawn in $D$ without making any crossings. This shows that the only
possible cases for a crossing of two edges $e,f$ in $L$ are the following ones,
where we take the notation from Figure~\ref{Fig:2.1} and we let $u$ be
the vertex of $L$ that is not labeled in the figure:

(a) $L$ is a B-link and $e=zv$, $f=uw$.

(b) $L$ is a B-link and $e=\bar zv$, $f=uw$.

(c) $L$ is a base link and $e=zv$, $f=uw$.

(d) $L$ is a base link and $e=vw$, $f=\bar zu$.

(e) $L$ is a base link and $e=\bar zw$, $f=vu$.

\smallskip
\noindent
Let $D$ be a disk as discussed above corresponding to the crossing of $e$ and $f$.
In cases (b), (d) and (e), the 3-valent vertex $u$ has all neigbors on the boundary of $D$,
so the crossing between $e$ and $f$ can be eliminated by moving $u$ inside $D$
onto the other side of the edge $e$ (see Fig.~\ref{Fig:2.3E}(a)).

\begin{figure}[t!]
\centering
\includegraphics[width=0.5\textwidth]
{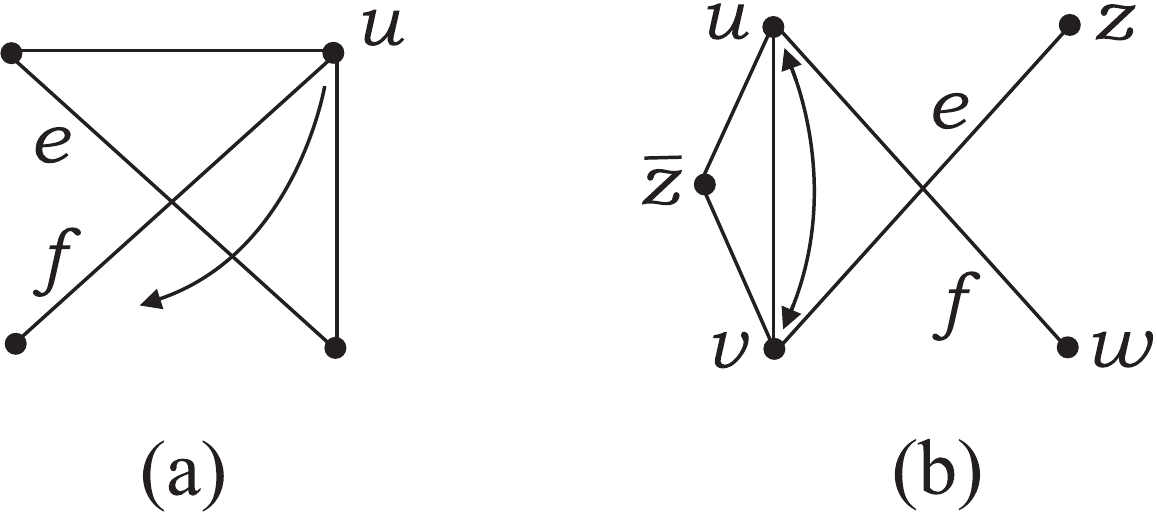} \caption{Eliminating the crossing between the edges $e$ and $f$.}
 \label{Fig:2.3E}
  \end{figure}

It remains to consider cases (a) and (c). Observe that the boundary of $D$
contains vertices $z,u,v,w$ in this order and that $u$ and $v$ both have
precisely one additional neighbor $\bar z$ outside of $D$. Therefore, we
can turn $\varphi$ into another 1-immersion of $G$ by swapping $u$ and $v$
and only redraw the edges inside $D$ (see Fig.~\ref{Fig:2.3E}(b)). However, this eliminates the crossing in $D$ and yields a 1-immersion with fewer crossings, a contradiction.
\end{proof}

\begin{lem}
\label{lem:2.2}
Let $\varphi$ be a $1$-immersion of a chain graph\/ $G$ such that
the number of crossings in $\varphi$ is minimal among all\/ $1$-immersions of $G$.
If\/ $\Pi$ and\/ $\Pi'$ are nonadjacent A- and B-chains, respectively, then in
$\varphi$ the following holds for every 3-cycle $C$ of\/ $\Pi$:
\begin{itemize}
    \item [\rm (i)] The core vertices of\/ $\Pi'$ either
    all lie inside or all lie outside $C$.
    \item [\rm (ii)] If all core vertices of\/ $\Pi'$ lie
    inside $($resp. outside$)$ $C$, then at most one
    vertex of\/ $\Pi'$ lies outside $($resp.\ inside$)$ $C$.
\end{itemize}
\end{lem}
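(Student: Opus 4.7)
The plan is to use that every 3-cycle in a 1-immersion is embedded (as noted at the start of Section~\ref{Sec:2}), so the 3-cycle $C$ is a Jordan curve splitting the plane into two regions, and that each edge of $G$ is crossed at most once in $\varphi$, so at most three edges of $G$ can cross $C$. Since $\Pi$ and $\Pi'$ are nonadjacent, they share no vertex; hence $\Pi'$ is vertex-disjoint from $C$, and each vertex of $\Pi'$ lies strictly inside or strictly outside $C$.

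For (i), I would assume for contradiction that the core vertices of $\Pi'$ do not all lie on the same side of $C$. Since distinct links of $\Pi'$ meet only at core vertices, the core vertices are cut vertices of $\Pi'$, and so along $v(0),v(1),\ldots,v(n)$ one finds consecutive core vertices $v(k),v(k+1)$ on opposite sides of $C$. All edges of $\Pi'$ that cross $C$ must then belong to the link $L$ of $\Pi'$ between $v(k)$ and $v(k+1)$, and by Lemma~\ref{lem:2.1} these crossings use distinct edges of $C$. A case analysis on the three types of link of Fig.~\ref{Fig:2.1} -- using both the edge structure of $L$ and the geometric planarity constraints around $C$ (namely that the crossings appear in the cyclic order around $C$ consistent with the cyclic order of the crossing edges at their link endpoints) -- is used to rule out any partition of $V(L)$ separating $v(k)$ from $v(k+1)$ that could be realized with at most three crossings of $C$, yielding the desired contradiction.

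For (ii), assume all core vertices of $\Pi'$ lie inside $C$, and let $U$ be the set of non-core vertices of $\Pi'$ lying outside $C$. Every edge of $\Pi'$ joining a vertex of $U$ to a vertex inside $C$ crosses $C$. Each non-core vertex of a link has degree at least two in its link, and a case analysis on the three link types of Fig.~\ref{Fig:2.1} shows that any link $L$ of $\Pi'$ containing at least one vertex of $U$ contributes at least two such crossings, and if $L$ contains two vertices of $U$ (or if there are vertices of $U$ in two different links of $\Pi'$) then at least four crossings of $C$ are forced. Since $C$ accepts at most three crossings, we conclude $|U|\le 1$.

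The main obstacle in both parts is the link-by-link case analysis of how the fixed small graphs of Fig.~\ref{Fig:2.1} can be partitioned by a Jordan curve in a plane 1-immersion. Naively the minimum cuts separating $v(k)$ from $v(k+1)$ in $L$ can be small, so the crossing-count bound alone is not immediately decisive; one must carefully combine the additional geometric constraints of a 1-immersion -- distinct edges of $C$ for distinct crossings, the cyclic order of crossings around $C$, Lemma~\ref{lem:2.1} forbidding crossings within a single link, and the rule that adjacent edges do not cross -- to rule out the potentially small-cut configurations. Once these local facts are settled, the two statements of the lemma follow from the clean global bound that at most three edges of $G$ cross the embedded 3-cycle $C$.
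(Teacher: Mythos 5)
Your part (ii) is essentially the paper's argument: assume all core vertices of $\Pi'$ are inside $C$, and count the edges that must cross the three edges of $C$ from the set $U$ of outside noncore vertices (the paper uses that any set $W$ of at least two noncore vertices of a single link sends at least four edges to $V(L)\setminus W$, and that a single noncore vertex has valence at least $3$, so two outside vertices in distinct links force at least six crossings). That part of your plan is sound.

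Part (i), however, has a genuine gap. Your plan is to reduce to a single link $L=L(v(k),v(k+1))$ with core vertices on opposite sides of $C$ and then rule out every separating configuration by a \emph{local} case analysis of $L$ together with the geometric constraints of a 1-immersion. This cannot work, because the minimum edge cut in $L$ separating its two core vertices has size $2$ (the two core-adjacent edges of a free cycle), and the configuration in which the embedded $3$-cycle $C$ crosses exactly those two edges is perfectly consistent with every local constraint you list: it uses two distinct edges of $C$, involves no crossing internal to $L$ (so Lemma~\ref{lem:2.1} is silent), no adjacent edges cross, and the cyclic orders match. Indeed, exactly this configuration occurs in the genuine 1-immersions of $G-e$ built in Lemma~\ref{lem:1.1} (Fig.~\ref{Fig:2.3}), so no local argument can exclude it. The paper's proof of (i) is essentially global: when $C$ does not contain $A$, any two core vertices of $\Pi'$ are joined by four edge-disjoint paths avoiding $V(C)$, so the three edges of $C$ cannot separate them; when $C$ contains $A$, the connectivity of $z$ and $\bar z$ to $\Omega(1),\Omega(3)$ and to $\Omega(2)$ through the \emph{other} chains forces $\Omega(1),\Omega(3),\Omega$ inside $C$ and $\Omega(2)$ outside, so the third crossing of $C$ is consumed by the $\Omega$-edge to $\Omega(2)$ and $C$ crosses precisely the two core-adjacent edges of the free cycle of $L(z,\bar z)$; the contradiction is then extracted from further edge-disjoint path arguments involving the fourth vertex $x$ of the link of $\Pi$ containing $C$ and the vertices $A$, $B$, $\Omega(2)$, $\Omega(3)$. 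Without importing this global connectivity of the chain graph, your case analysis will terminate in a configuration you cannot refute.
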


\begin{proof}
First we show (i). If $C$ does not contain the vertex $A$, then every
two core vertices of $\Pi'$ are connected by four
edge-disjoint paths not passing through the vertices of $C$,
hence (i) holds for $C$.

Suppose now that $C$ contains the vertex $A$ and that
core vertices of $\Pi'$ lie inside and outside $C$.
Then there is a link $L(z,\overline{z})$ of $\Pi'$
such that the vertex $z$ lies inside and the vertex
$\overline{z}$ lies outside $C$. We may assume
without loss of generality that $\Pi$ and $\Pi'$ are
incident to the base vertices $\Omega(1)$ and
$\Omega(2)$, respectively, and (taking (A) into account) that the vertex $z$
(if $z\neq B$) separates the vertices $B$ and
$\overline{z}$ in $\Pi'$ (see Fig.~\ref{Fig:2.4}(a), where in
$L(z,\overline{z})$ the dotted line indicates that
the link has either edge $\varepsilon z$ or
$\varepsilon \overline{z}$; also if $\bar z = \Omega(2)$,
then the link indicated in Fig.~\ref{Fig:2.4}(a) is a base link).

The 3-cycle $C$ crosses at least two edges of
$L(z,\overline{z})$. The vertex $z$ (resp.
$\overline{z}$) is connected to each of the vertices
$\Omega(1)$ and $\Omega(3)$ (resp. to the vertex
$\Omega(2)$) by two edge-disjoint paths not passing through
$V(C)$ or through the noncore vertices of
$L(z,\overline{z})$. Hence, $\Omega(1)$
and $\Omega(3)$ lie inside $C$ (resp. $\Omega(2)$
lies outside $C$). It follows that the vertex
$\Omega$ lies inside $C$ and the edge $(\Omega,\Omega(2))$
is the third edge that crosses $C$. We conclude that $C$
crosses exactly two edges of $L(z,\overline{z})$ and
the two edges separate $z$ from $\overline{z}$ in
$L(z,\overline{z})$. Thus, the two edges are the
core-adjacent edges of the free cycle of
$L(z,\overline{z})$.
Hence, in $\varphi$, the link
$L(z,\overline{z})$ is 1-immersed as shown
in Fig.~\ref{Fig:2.4}(b), where the dotted edges indicate
alternative possibilities for the position of $z$ (at top) or $\overline{z}$ (at bottom).

\begin{figure}[t!]
\centering
\includegraphics[width=0.7\textwidth]
{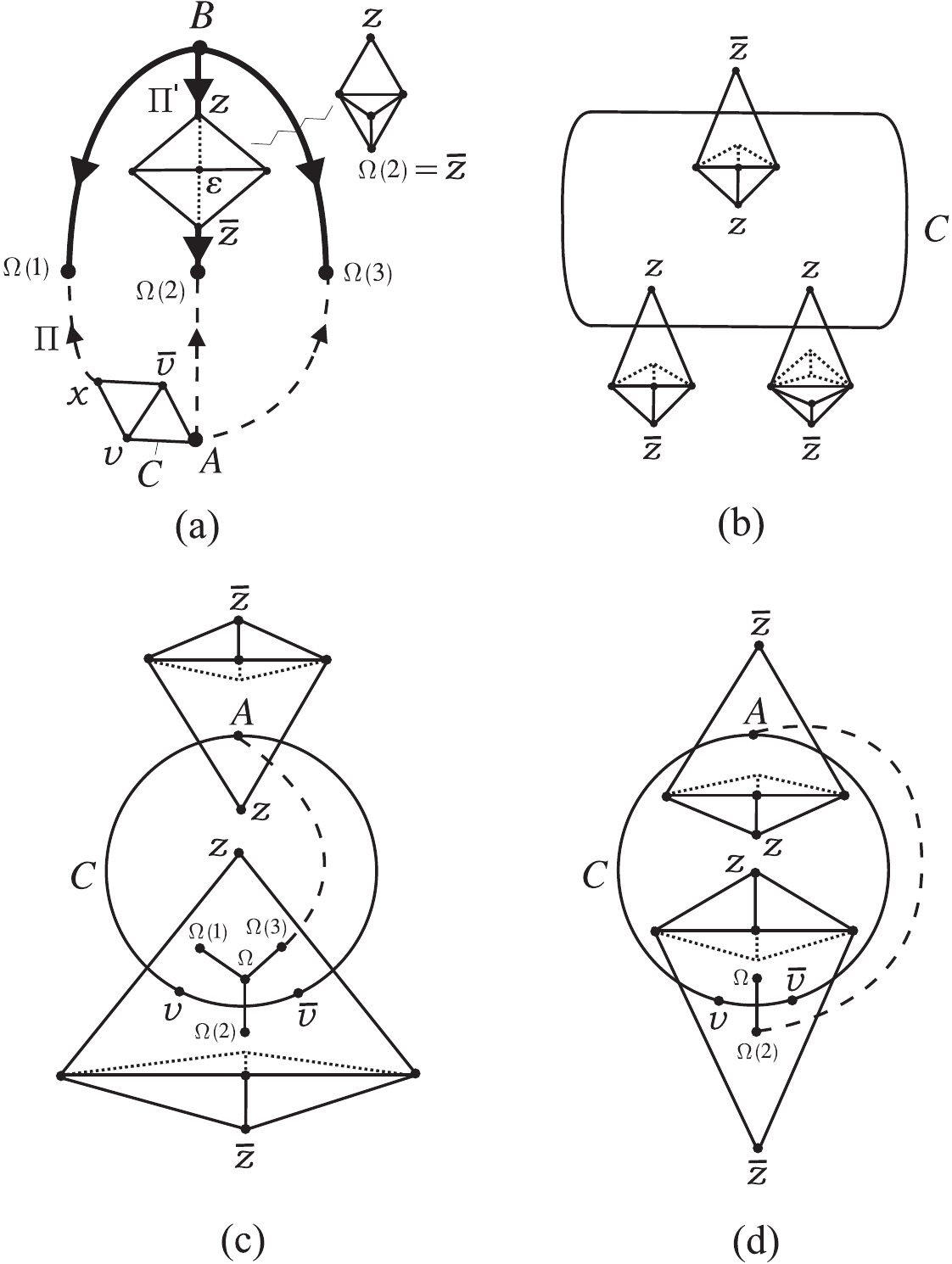} \caption{Cases in the proof of Lemma \ref{lem:2.2}.}
 \label{Fig:2.4}
  \end{figure}

Let $v,\bar v$ be the vertices of $C$ different from $A$
and let $x$ be the fourth vertex of the link containing $C$.
The vertex $x$ is connected to $\Omega(1)$ by two
edge-disjoint paths not passing through the vertices of $C$,
hence $x$ lies inside $C$. At most two vertices of
$C$ lie inside the free cycle of $L(z,\overline{z})$. Suppose exactly one of $v$ and $\overline{v}$ is inside the free cycle. If we are in the case of the bottom of Fig.~\ref{Fig:2.4}(b), then the path $vx\overline{v}$ can not lie inside $C$, a contradiction. In the case of the top of Fig.~\ref{Fig:2.4}(b), if the path $vx\overline{v}$ lies inside $C$, then $x$ must lie
inside a 3-cycle $Q$ of $L(z,\overline{z})$ incident
to $z$, whereas $A$ lies outside $Q$, a
contradiction, since $Q$ is not incident to B and in
$G$ there are two edge-disjoint paths connecting $x$
to $A$ and not passing through the vertices $v$,
$\overline{v}$, and the vertices of $Q$.

If either both $v$ and $\overline{v}$ or none of them
lie inside the free 4-cycle, then in the case of
Fig.~\ref{Fig:2.4}(c) (resp. (d)), where we depict
the two possible placements of the nonbase link
$L(z,\overline{z})$, there are two edge-disjoint
paths of $G$ connecting $A$ and $\Omega(3)$ (resp. $A$ and
$\Omega(2)$) and not passing through $z$ (resp.
$\overline{z}$), a contradiction. Reasoning exactly
in the same way, we also obtain a contradiction
when $L(z,\overline{z})$ is a base link.

Now we prove (ii). By (A), we may assume that all core
vertices of $\Pi'$ lie inside $C$. By inspecting Fig.~\ref{Fig:2.1},
it is easy to check
that for every link $L$, for every set $W$ of noncore
vertices of $L$ such that $|W|\geq 2$, there are at least four edges
joining $W$ with of $V(L)\setminus W$. Hence, if at least two noncore
vertices belonging to the same link of $\Pi'$ lie outside $C$,
then at least four edges join them with the vertices of $\Pi'$
lying inside $C$, a contradiction. Every noncore
vertex of $\Pi'$ has valence at least 3. Hence if exactly $n$ ($n\geq 2$)
noncore vertices of $\Pi'$ lie outside $C$ and if they all belong to
different links, then at least $3n\geq 6$ edges join them with
the vertices of $\Pi'$ lying inside $C$, a contradiction.
\end{proof}

Two chains \emph{cross} if an edge of one crosses an edge of the other.

\begin{lem}
\label{lem:2.3}
Let $\varphi$ be a $1$-immersion of a chain graph\/ $G$ such that
the number of crossings in $\varphi$ is minimal among all\/ $1$-immersions of $G$.
If\/ $\Pi$ and\/ $\Pi'$ are nonadjacent A- and B-chains,
respectively, then $\Pi$ does not cross $\Pi'$ in
$\varphi$.
\end{lem}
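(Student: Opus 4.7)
The plan is a proof by contradiction. Assume some edge $e\in E(\Pi)$ crosses some edge $e'\in E(\Pi')$ in $\varphi$. Let $L$ be the A-link of $\Pi$ containing $e$, with core vertices $z,\bar z$ and noncore vertices $u,v$, so that $L$ contains the two 3-cycles $C_1=zuv$ and $C_2=\bar z uv$ sharing the edge $uv$. By Lemma~\ref{lem:2.1}, no two edges of $L$ cross, hence $L$ is drawn as an embedded planar graph, and the complement $\RR^2\setminus L$ has exactly three faces, which I denote by $\mathrm{in}(C_1)$, $\mathrm{in}(C_2)$, and $\mathrm{out}(L)$. After possibly swapping $u$ and $v$, I may assume $e$ is an edge of $C_1$.

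The key step is to apply Lemma~\ref{lem:2.2}(i) to both $C_1$ and $C_2$. Since $\mathrm{in}(C_1)\cap\mathrm{in}(C_2)=\emptyset$, all core vertices of $\Pi'$ lie in a single one of the three faces, giving three main cases. In each, Lemma~\ref{lem:2.2}(ii) bounds the number of noncore ``exceptional'' vertices of $\Pi'$ outside the main region, and the fact that $e'$ crosses $C_1$ forces at least one endpoint of $e'$ to be such an exceptional vertex.

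In the two cases where the core vertices of $\Pi'$ lie inside $C_1$ or inside $C_2$, I expect a quick crossing-count to yield the contradiction. The exceptional vertex $x$ satisfies $\deg_{L'}(x)\geq 3$ for its link $L'\subseteq\Pi'$ (a valence bound established in the proof of Lemma~\ref{lem:2.2}), and its $\geq 3$ neighbors in $L'$ all lie in the main region. But the only edges of $L$ separating two distinct faces are $zu,zv$ (between $\mathrm{out}(L)$ and $\mathrm{in}(C_1)$), $\bar zu,\bar zv$ (between $\mathrm{out}(L)$ and $\mathrm{in}(C_2)$), and $uv$ (between $\mathrm{in}(C_1)$ and $\mathrm{in}(C_2)$). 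Combined with the 1-immersion property that each edge is crossed at most once, at most $2$ edges (if $x\in\mathrm{out}(L)$) or at most $1$ edge (if $x$ lies in the ``opposite'' interior) from $x$ can reach the main region, contradicting $\deg_{L'}(x)\geq 3$.

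The hardest case is when all core vertices of $\Pi'$ lie in $\mathrm{out}(L)$. Here Lemma~\ref{lem:2.2}(ii) permits one noncore vertex $x_1\in\mathrm{in}(C_1)$ and one $x_2\in\mathrm{in}(C_2)$. The same counting yields at most $1+2=3$ edges incident to each of $x_1,x_2$ in its link of $\Pi'$, which forces equality and compels $x_1,x_2$ to be adjacent noncore vertices of a common link $L'\subseteq\Pi'$, each of degree exactly~$3$, with their remaining neighbors in $\mathrm{out}(L)$ reached via the prescribed crossings on $zu,zv,\bar zu,\bar zv$. The final step, which I anticipate being the main obstacle, is to rule out this rigid configuration by exploiting the specific structure of B-links and base links shown in Figure~\ref{Fig:2.1}, together with a 3-cycle of $L'$ incident to $x_1$ or $x_2$; such a 3-cycle traverses all three faces of $L$ in a topologically constrained way, and a planarity/crossing-count argument analogous to those in Lemmas~\ref{lem:2.1} and~\ref{lem:2.2} should yield the final contradiction.
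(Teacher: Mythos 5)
Your reduction to the rigid configuration is sound and is essentially the route the paper takes: after normalizing so that the two triangles of $L$ bound disjoint interiors (the paper does this explicitly with a M\"obius transformation; you assert $\mathrm{in}(C_1)\cap\mathrm{in}(C_2)=\emptyset$ without justification, but the normalization is available), Lemma~\ref{lem:2.2} together with the crossing count on the five edges of $L$ forces exactly one trivalent noncore vertex of $\Pi'$ into each triangle, and the single available crossing on $uv$ forces these two vertices to be adjacent, hence to lie in a common link $L'$ of $\Pi'$. Your disposal of the two ``easy'' cases by counting the edges of $L$ separating the relevant faces is also correct.

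The genuine gap is that the step you defer --- ruling out the rigid configuration --- is where almost all of the work in the paper's proof actually lies, and your sketch of it would not go through as stated. First, one must observe that a base link contains no two adjacent trivalent noncore vertices, so $L'$ is a B-link; you do not make this reduction. Second, the contradiction is not obtained by a crossing-count ``analogous to Lemmas~\ref{lem:2.1} and~\ref{lem:2.2}'': for each of the two essentially different placements of the B-link $L'$ (Fig.~\ref{Fig:2.5}(b) and (c)) the paper identifies a specific $3$-cycle of $L'$ (such as $\overline{z}w\varepsilon$ or $\overline{z}\overline{w}\varepsilon$) all of whose edges are already crossed, and then exhibits vertices of $G$ that this $3$-cycle separates even though they are joined by enough edge-disjoint paths avoiding it --- a global separation argument that uses the structure of the entire chain graph, not just of $L$ and $L'$. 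Third, your setup silently assumes the crossed link $L$ of $\Pi$ is an A-link; it may be the base link of the A-chain, which has five vertices and a different face structure, and the paper must treat this subcase separately (the last paragraph of its case (c), where $x$ and $\overline{x}$ are not both core vertices). Until these three points are carried out, the proof is incomplete precisely at its crux.
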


\begin{proof}
Suppose, for a contradiction, that $\Pi$ crosses $\Pi'$.
Then an edge  of a link $L(z,\overline{z})$ of $\Pi'$ crosses
a 3-cycle $C=xv\bar v$ of a link $L$ of $\Pi$.
Let $\overline{C}=\bar x v \bar v$ be a 3-cycle that is adjacent
to $C$ in $L$. (If $L$ is not a base link, then $L=C\cup\overline{C}$
and $x,\bar x$ are the core vertices of $L$.)
By Lemma~\ref{lem:2.2}, we may assume that all core vertices
of $\Pi'$ lie outside $C$ and that exactly one vertex
$u$ of $\Pi'$ lies inside $C$. The vertex $u$ is
3-valent and is not a core vertex. The three edges incident with $u$ cross all three edges of $C$, hence $\overline{C}$ does not cross $C$. If $\overline{C}$ lies inside $C$, then one of the three edges incident with $u$ crosses $C$ and $\overline{C}$, a contradiction. If $C$ lies inside $\overline{C}$, then we consider the plane as the complex plane and
apply the M\"obius transformation $f(z)=1/(z-a)$ with the point $a$ taken inside $\overline{C}$ but outside $C$. This yields a 1-immersion of $G$ such that
\vspace{1mm}

(a) $C$ lies outside $\overline{C}$, $\overline{C}$ lies outside $C$, and exactly one vertex $u$ of $\Pi'$ lies inside $C$.

\vspace{1mm}

\noindent Therefore, we may assume that in $\varphi$ we have (a). Since the three edges incident with $u$ cross all three edges of $C$, at least two vertices of $\Pi'$ lie outside $\overline{C}$, hence, by Lemma~\ref{lem:2.2}, all core vertices of $\Pi'$ lie outside $C$ and $\overline{C}$. Since the edge $v\bar v$ in $C\cap\overline{C}$ is crossed by
an edge of $L(z,\bar z)$, also $\overline C$ contains precisely one
vertex $u'$ of $L(z,\bar z)$ and $u'$ has degree 3 and is not a core
vertex.

\begin{figure}
\centering
\includegraphics[width=0.7\textwidth]
{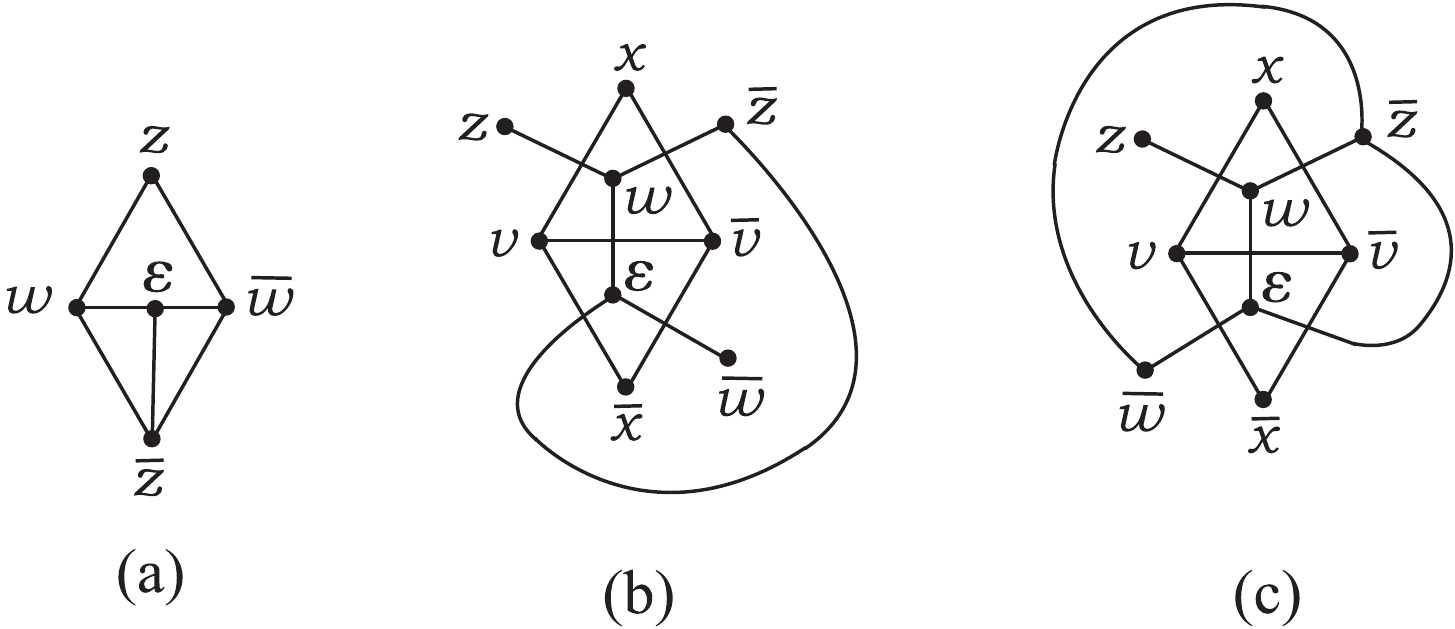} \caption{Cases in the proof of Lemma \ref{lem:2.3}.}
 \label{Fig:2.5}
  \end{figure}

Adjacent trivalent vertices $u,u'$ cannot be contained in a base
link. Therefore, $L(z,\overline{z})$ is not a base link.
Let $L(z,\overline{z})$ be depicted as shown in Fig.~\ref{Fig:2.5}(a).
Because of symmetry, we may assume that $u=w$ and
$u'=\varepsilon$, and that the crossings are as shown in
Fig.~\ref{Fig:2.5}(b) and (c).

 In the case of Fig.~\ref{Fig:2.5}(b) the adjacent vertices $z$ and $\overline{w}$
 of $L(z,\overline{z})$ are separated by the 3-cycle $\overline{z} w \varepsilon$,
 whose edges are crossed by three edges different from the edge
 $z\overline{w}$, a contradiction.

 Consider the case in Fig.~\ref{Fig:2.5}(c). If $x$ and $\overline{x}$
 are core vertices of $\Pi$, then they are separated by the 3-cycle
 $\overline{z} \overline{w} \varepsilon$ of $\Pi'$, a contradiction, since there are 4 edge-disjoint paths between $x$ and $\overline{x}$ that avoid this 3-cycle (the 3-cycle does not contain the vertex $B$).

 Suppose that $x$ and $\overline{x}$ are not two core vertices. This is possible only when $L$ is a base link. The 3-cycle $\overline{z} w \varepsilon$ of $L(z,\overline{z})$ crosses the three edges joining the vertex $\overline{v}$ of $L$ with three vertices $x$, $v$, and $\overline{x}$ of $L$. The fifth vertex of $L$ is adjacent to at least one vertex from $x$, $v$, and $\overline{x}$, hence it lies outside $\overline{z} w \varepsilon$ and is not adjacent to $\overline{v}$. Thus, $\overline{v}$ has valence 3 in $L$. If $\overline{v}$ is a core vertex, then, since the 3-cycle $\overline{z} w \varepsilon$ does not contain the vertex $B$, $\overline{v}$ is connected to one of the vertices $x$, $v$, and $\overline{x}$ by a path passing through $B$ and not passing through the vertices of $\overline{z} w \varepsilon$, a contradiction. Hence $\overline{v}$ is not a core vertex. The link $L$ has
 exactly one noncore vertex $\overline{v}$ of valence 3 and the vertices $x$
 and $\overline{x}$ are adjacent. Hence the 3-cycle $x \overline{v} \overline{x}$
 separates two core vertices $z$ and $\overline{z}$ of $\Pi'$, a contradiction, since $z$ and $\overline{z}$ are connected by a path not passing through the vertices of the 3-cycle $x \overline{v} \overline{x}$.
\end{proof}

\begin{thm}
\label{thm:2.1}
Every chain graph is an MN-graph.
\end{thm}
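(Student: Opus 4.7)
My plan is to combine Lemma~\ref{lem:1.1} (which yields minimality) with a contradiction argument for non-1-planarity that leans on Lemmas~\ref{lem:2.1}--\ref{lem:2.3}. Assume for contradiction that a chain graph $G$ admits a 1-immersion $\varphi$ with the minimum number of crossings. The goal is to reduce the situation to the non-planarity of $K_{3,3}$, viewing $G$ as a subdivision of $K_{3,3}$ whose parts are $\{A,B,\Omega\}$ and $\{\Omega(1),\Omega(2),\Omega(3)\}$, where A-chains and B-chains play the role of the six edges incident to $A$ and $B$, and the $\Omega$-edges complete the picture.

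The first step is to contract each A-chain and each B-chain to a single super-edge, so that $\varphi$ induces a drawing of $K_{3,3}$. Lemma~\ref{lem:2.3} immediately forbids crossings between any nonadjacent (A-chain, B-chain) pair. What remains is to rule out crossings between a chain and a nonadjacent $\Omega$-edge. I would handle this by an argument in the spirit of Lemmas~\ref{lem:2.2} and~\ref{lem:2.3}: if an $\Omega$-edge crossed a nonadjacent chain, then a 3-cycle of a link inside the chain would have to separate vertices in a way that contradicts the edge-disjoint path structure available through $\Omega$ and the remaining chains. Because each $\Omega$-edge is a single edge crossed at most once, the required casework should be more constrained than in Lemma~\ref{lem:2.3}.

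Once no nonadjacent super-edges of the contracted $K_{3,3}$ cross, I would invoke the introductory uncrossing principle — any crossing of two edges sharing an endpoint can be eliminated — together with the minimum-crossings assumption on $\varphi$ to remove any surviving super-crossings between adjacent chains or $\Omega$-edges. This yields a plane embedding of $K_{3,3}$, contradicting its non-planarity and completing the proof.

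The main obstacle will be the second step, ruling out a crossing between a chain and a nonadjacent $\Omega$-edge; I expect it to mimic the proof of Lemma~\ref{lem:2.3} but with careful topological bookkeeping to handle the asymmetry between a single edge and a chain of links. A secondary concern is verifying that the final super-uncrossing of adjacent pairs does not disturb the non-crossing conditions already secured for nonadjacent pairs, which should follow because the uncrossing moves are local to a small disk around the shared endpoint.
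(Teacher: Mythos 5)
Your skeleton for the reduction is sound and matches the paper's opening move: Lemma~\ref{lem:1.1} gives minimality, Lemma~\ref{lem:2.3} kills crossings between nonadjacent chains, and the paper's Claim~\ref{clm:2.0} does exactly what your first and third steps describe --- if every chain retains an end-to-end path uncrossed by $\Omega$-edges, these six paths plus the $\Omega$-edges form a subdivided $K_{3,3}$ whose only crossings are between adjacent branches, and those are easily removed, contradicting non-planarity of $K_{3,3}$. So far, so good.

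The genuine gap is your second step, and it is not a bookkeeping issue: it is the entire content of the theorem. You propose to show that no $\Omega$-edge crosses a nonadjacent chain, by an argument ``in the spirit of'' Lemmas~\ref{lem:2.2} and~\ref{lem:2.3} in which ``a 3-cycle of a link would have to separate vertices.'' That mechanism does not apply here: a single $\Omega$-edge crossing a single edge of a link separates nothing, because every link has several edge-disjoint paths between its incident core vertices, and the separation arguments of Lemmas~\ref{lem:2.2}--\ref{lem:2.3} are driven by a whole 3-cycle of one chain being crossed by edges of another --- a configuration a lone edge cannot produce. Indeed, the paper never proves (and does not need) your stronger statement; it proves only that no link can have \emph{all} of its $(x,y)$-paths crossed by $\Omega$-edges, and even that weaker statement consumes the bulk of the proof: one must first pin down the only possible blocking configuration (two $\Omega$-edges crossing the two core-adjacent edges of a free cycle $C$, with $\Omega$ immersed inside $C$ and $\Omega(1)$ identified as the third base vertex), and then refute six separate cases (Fig.~\ref{Fig:2.6}(a)--(f)) by tracking how edge-disjoint paths from $\Omega(1)$ to $A$ or $B$ in the \emph{other} chains are forced to cross specific edges, repeatedly exploiting that two crossed edges plus $\Omega(1)$ must separate $G$ and hence must be core-adjacent edges of a free cycle. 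Your assessment that this step should be ``more constrained'' than Lemma~\ref{lem:2.3} because an $\Omega$-edge is crossed at most once has it backwards: the interaction of the $\Omega$-edges with the chains is precisely where the difficulty lives, and the proposal as written defers all of it to a step whose stated method would fail.
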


\noindent
\emph{Proof}. Let $G$ be a chain graph. By Lemma \ref{lem:1.1}, it suffices
to prove that $G$ is not 1-planar. Consider, for a contradiction, a
1-immersion $\varphi$ of $G$ and suppose that $\varphi$ has minimum
number of crossings among all 1-immersions of $G$.

We know by Lemma~\ref{lem:2.3} that non-adjacent chains
do not cross each other. In the sequel we will consider
possible ways that the $\Omega$-edges cross with one of the
chains. Let us first show that such a crossing is inevitable.

\begin{clm}
\label{clm:2.0}
At least one of the chains contains a link $L=L(x,y)$ such that
every $(x,y)$-path in $L$ is crossed by an $\Omega$-edge.
\end{clm}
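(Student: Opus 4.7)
The plan is a proof by contradiction. I would assume the claim fails, so that for every chain $\Pi$ of $G$ and every link $L = L(x,y)$ of $\Pi$ there is an $(x,y)$-path in $L$ none of whose edges is crossed by any $\Omega$-edge in $\varphi$. For each chain $\Pi$ with consecutive core vertices $v(0), v(1), \ldots, v(n)$ and intermediate links $L_i = L(v(i-1), v(i))$, I would pick such an ``$\Omega$-safe'' path inside every $L_i$ and concatenate them to produce a path $P_\Pi$ from $v(0)$ to $v(n)$ all of whose edges avoid being crossed by any $\Omega$-edge.

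Next, I would form the subgraph $R := P_{\Pi_1} \cup \cdots \cup P_{\Pi_6} \cup \{\omega_1, \omega_2, \omega_3\}$, where $\omega_i$ denotes the $\Omega$-edge $\Omega\Omega(i)$. Since each $P_\Pi$ runs between one of $A,B$ and one of $\Omega(1), \Omega(2), \Omega(3)$ and the $\omega_i$ form a star at $\Omega$, the graph $R$ is a topological subdivision of $K_{3,3}$ with bipartition $\{A, B, \Omega\}$ and $\{\Omega(1), \Omega(2), \Omega(3)\}$. In the drawing $\varphi|_R$, by the choice of the safe paths no $\omega_i$ crosses any chain-path edge, and no two $\omega_i$ cross each other since they are adjacent at $\Omega$. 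Moreover, by Lemma~\ref{lem:2.3}, chain-paths carried by non-adjacent A- and B-chains do not cross one another either.

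The remaining task is to rule out crossings in $\varphi|_R$ between chain-paths coming from the same chain or from adjacent chains (two A-chains sharing $A$, two B-chains sharing $B$, or an adjacent A- and B-chain sharing some $\Omega(i)$). Once this is accomplished, $\varphi|_R$ is a planar drawing of a subdivision of $K_{3,3}$, contradicting the non-planarity of $K_{3,3}$. This last step is the main obstacle. The intended route is to exploit the minimality of the number of crossings in $\varphi$ together with the freedom in picking the safe path within each link: any potential crossing between an edge of $P_\Pi$ and an edge of $P_{\Pi'}$ for adjacent $\Pi, \Pi'$ should be removable by a local swap at the crossing point together with a redrawing inside the two incident links (possibly by switching to a different safe path within one of them). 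The resulting 1-immersion of $G$ would then have strictly fewer crossings than $\varphi$, contradicting minimality. A careful case analysis guided by Lemmas~\ref{lem:2.1} and~\ref{lem:2.3} and the explicit structure of A-, B-, and base links shown in Fig.~\ref{Fig:2.1} will be the technically delicate ingredient.
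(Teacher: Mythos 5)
Your skeleton is exactly the paper's: assume every link has an $\Omega$-safe $(x,y)$-path, concatenate these to get one $\Omega$-safe path per chain, and observe that these six paths together with the three $\Omega$-edges form a subdivision of $K_{3,3}$ in which, by Lemma~\ref{lem:2.3} and the choice of paths, the only crossings are between branches sharing an endvertex.

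The one place you go astray is the step you flag as ``the main obstacle.'' You propose to remove the remaining crossings by modifying $\varphi$ itself into a $1$-immersion of all of $G$ with fewer crossings, invoking minimality. That is both harder than necessary and likely to fail: a local redrawing inside two links must avoid all the other edges of $G$ in that region, and nothing guarantees you can do the swap without creating new crossings elsewhere, so no contradiction with minimality is forthcoming. The paper's point is that you do not need a valid $1$-immersion of $G$ at all at this stage: you only need a planar drawing of the abstract $K_{3,3}$-subdivision $R$. In the drawing $\varphi|_R$, every crossing is between two branches with a common endvertex, and such crossings can always be eliminated by the standard local surgery already recorded in the introduction (exchange the two arcs beyond the crossing point closest to the shared endvertex), performed purely within $R$ and with complete disregard for the rest of $G$. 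Iterating this yields an embedding of a subdivision of $K_{3,3}$ in the plane, contradicting Kuratowski. With that substitution your argument closes; as written, the final step is a gap.
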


\begin{proof}
Suppose that for every link $L=L(x,y)$, an $(x,y)$-path in $L$ is
not crossed by any $\Omega$-edge. Then every chain contains a path
joining its end vertices that is not crossed by the $\Omega$-edges.
All six such paths plus the $\Omega$-edges form a subgraph of $G$
that is homeomorphic to $K_{3,3}$. By Lemma~\ref{lem:2.3},
the only crossings between subdivided edges of this $K_{3,3}$-subgraph
are among adjacent paths. However, it is easy to eliminate crossings
between adjacent paths and obtain an embedding of $K_{3,3}$ in
the plane. This contradiction completes the proof of the claim.
\end{proof}

Let $L=L(x,y)$ be a link in an A- or B-chain $\Pi$ whose $(x,y)$-paths are
all crossed by the $\Omega$-edges. We may assume that $L$ is contained in
a chain connecting the vertex $\Omega(1)$ with $A$ or $B$ and that $x$ separates $y$ and $\Omega(1)$ in $\Pi$.
By Lemma \ref{lem:2.1}, the induced 1-immersion of $L$ is an embedding. The vertex $\Omega$ lies inside a face of $L$ and all $\Omega$-edges that cross $L$ cross the edges of the boundary of the face. Considering the possible embeddings of $L$, it is easy to see that all $(x,y)$-paths are crossed by $\Omega$-edges only in the case when $\Omega$ lies inside a face of $L$ whose boundary contains two core-adjacent edges of a free cycle $C$ of $L$, and two $\Omega$-edges cross the two core-adjacent edges. By (A), we may assume that $\Omega$ lies inside $C$.

If $C$ is a $k$-cycle, $k\in\{3,4\}$, then $L$ has another cycle $C'$ that shares with $C$ exactly $k-2$ edges and contains a core vertex not belonging to $C$. If $C$ lies inside $C'$, then we consider the plane as the complex plane and
apply the M\"obius transformation $f(z)=1/(z-a)$ with the point $a$ taken inside $C'$ but outside $C$. This yields a 1-immersion of $G$ such that $C$ does not lie inside $C'$ and $\Omega$ lies inside $C$. Hence, we may assume that $C$ does not lie inside $C'$, that $\Omega$ lies inside $C$ and two $\Omega$-edges $h$ and $h'$ cross two core-adjacent edges of $C$. Note that any two among the vertices $A,B,\Omega(2),\Omega(3)$ are joined
by four edge-disjoint paths not using any edges in the chain $\Pi$ containing $L$. Therefore, these four vertices of $G$ are all immersed in the same face of $L$.

Let the $\Omega$-edges $h$ and $h'$ join the vertex $\Omega$ with basic vertices $\Omega(i)$ and $\Omega(j)$, respectively. If the third basic vertex $\Omega(\ell)$ is $\Omega(2)$ or $\Omega(3)$, then $\Omega(2)$ and $\Omega(3)$ lie inside different faces of $L$, a contradiction, hence $\Omega(\ell)=\Omega(1)$.

The vertex $\Omega(1)$ is connected to one of the vertices $A$ and $B$ by two edge-disjoint paths, not passing the vertices of $C$. Hence, if $C$ is a 3-cycle, then $\Omega(1)$ is not inside $C$.

Now the embeddings of possible links $L$ (so that we can join the vertices $\Omega$ and $\Omega(1)$ by an edge not violating the 1-planarity) are shown in Figure \ref{Fig:2.6}.

\begin{figure}
\centering
\includegraphics[width=0.8\textwidth]%
{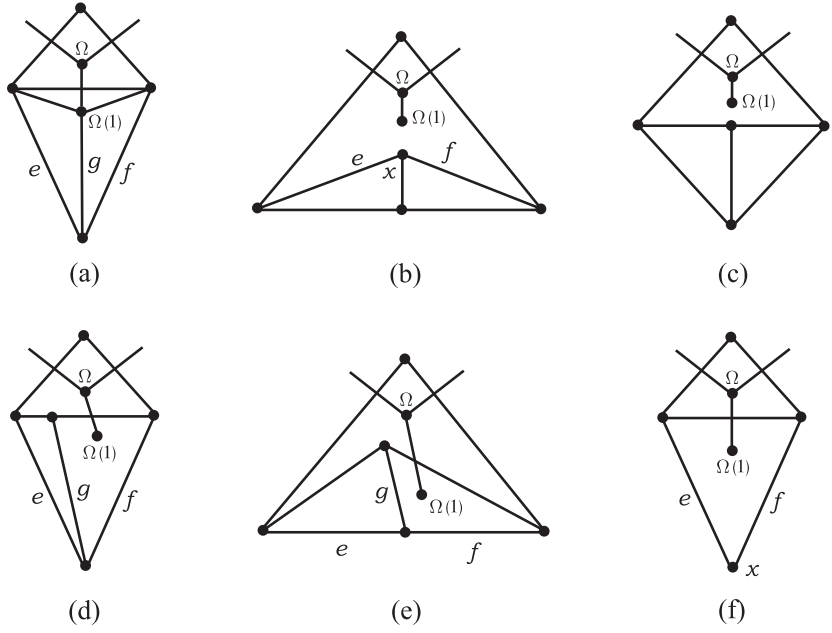}
 \caption{The $\Omega$-edges crossing a link}
 \label{Fig:2.6}
\end{figure}

Let us now consider particular cases (a)--(f) of Figure \ref{Fig:2.6}.

(a): In this case, $L$ is a base link and $x=\Omega(1)$. Consider two edge-disjoint paths in a chain $\overline{\Pi}$ joining $\Omega(1)$ with the vertex $A$ or $B$ which is not incident with $\Pi$. These paths must cross the edges $e$ and $f$ indicated in the figure. Let $a$ be the edge crossing $e$ and $b$ be the edge crossing $f$. It is easy to see that $a$ and $b$ cannot be both incident with $\Omega(1)$ since $\Omega(1)$ is incident with three edges of the base link in the chain $\overline{\Pi}$. The 1-planarity implies that the edges $a,b$ and the vertex $\Omega(1)$
separate the graph $G$. Therefore, $a$ and $b$ are core-adjacent edges of a link in the chain $\overline{\Pi}$. If the edge $g$ (shown in the figure) is crossed by an edge $c$ of $\overline{\Pi}$, then also $a,c$ and $\Omega(1)$ separate the graph. Thus $a,c$ would be core-adjacent edges in a link in $\overline{\Pi}$ as well, a contradiction. But if $g$ is not crossed and $a$ is not incident with $\Omega(1)$, then the edge $a$ and the vertex $\Omega(1)$ separate $G$, a contradiction. If $a$ is incident with $\Omega(1)$, then $b$ is not (as proved above). Now we get a contradiction by considering the separation of $G$ by the edge
$b$ and the vertex $\Omega(1)$.

(b): In this case, $L$ is a B-link, the cycle $C'$ lies inside $C$ and $\Omega(1)$ is inside $C$ but outside $C'$ (see Fig.~\ref{Fig:2.6} (b)). In the figure, the vertex labeled $x$ is actually $x$ and not $y$ because the vertex is not $B$ and is connected to $\Omega(1)$ by two edge-disjoint paths not passing through the vertices of $C$, and $y$ is connected to the vertex $B$ (lying outside $C$) by two edge-disjoint paths not passing through the vertices of $C$. Again, consider two edge-disjoint paths in the A-chain $\overline{\Pi}$ joining $\Omega(1)$ with the vertex $A$. Their edges $a$ and $b$ (say) must cross the edges $e$ and $f$, respectively. As in the proof of case (a), we see that the edges $a$, $b$ and the vertex $\Omega(1)$ separate $G$, thus $a$ and $b$ are core-adjacent edges of a free cycle of a link of $\overline{\Pi}$. This free cycle has length 3 and separates $x$ from $\Omega(1)$, a contradiction, since there is a B-subchain from $x$ to $\Omega(1)$ disjoint from the free 3-cycle and not containing the edges of $L$ incident with $x$.

(c): In this case, $L$ is a B-link, the cycle $C'$ lies outside $C$ and $\Omega(1)$ is inside $C$. The vertex $\Omega(1)$ is connected by 4 edge-disjoint paths with vertices $x$ and $A$ such that the paths do not pass through noncore vertices of $L$, a contradiction.

(d) and (e): In these cases, $L$ is a B-link and $\Omega(1)$ lies inside a 3-cycle of $L$. Two edge-disjoint paths from $\Omega(1)$ to $A$ cross the edges $e$ and $f$ of $L$. One of the paths also crosses the edge $g$. Then the edges crossing $e$ and $g$ separate $G$, a contradiction, since $G$ is 3-edge-connected.

(f): In this case, $L$ is an A-link and the chain $\Pi$ is joining $\Omega(1)$
with the vertex $A$. Again, consider two edge-disjoint paths in the B-chain $\overline{\Pi}$ joining $\Omega(1)$ with $B$. Their edges $a$ and $b$ (say) must cross
the edges $e$ and $f$, respectively.
As in the proof of case (a), we see that the edges $a,b$ and the vertex
$\Omega(1)$ separate $G$, thus $a,b$ are core-adjacent edges
in a link $L'=L(z,z')$ in $\overline{\Pi}$. Let $z$ be the core vertex of $L'$ incident
with the edges $a=zp$ and $b=zq$. Note that in $L'$, vertices $p,q$ have
two common neighbors, a vertex $r$ of degree 3 in $G$ and the core vertex $z'$,
and that $z'$ is adjacent to $r$. Inside $L$ we have the subchain $\Pi'$ of the A-chain $\Pi$ connecting $x$ with $\Omega(1)$. Hence, the free cycle of $L'$ containing the edges $a$ and $b$ must have length at least 4 (that is, $L'$ is not a base link) and  $z$ is immersed outside $L$,
while $p,q,r,z'$ are inside. The subchain $\Pi'$ has two
edge-disjoint paths that are crossed by the paths $prq$ and $pz'q$.
Each of the paths $prq$ and $pz'q$ crosses core-adjacent edges in $\Pi$ since the crossed edges
and $\Omega(1)$ separate $G$. Thus, they enter faces of these links
that are bounded by free cycles of the links. However, the edge $rz'$
would need to cross one edge of each of these two free cycles, hence the two free cycles are adjacent, that is, they are the two free cycles of an A-link of $\Pi$. Then the vertex $z'$ lies inside one of the two free cycles and the free cycle separates $z'$ from $\Omega(1)$, a contradiction, since $z'$ is connected with $\Omega(1)$ by a B-subchain. \hfill$\square$

\bigskip

The following theorem shows how chain graphs can be used to
construct exponentially many nonisomorphic MN-graphs of order $n$.

\begin{thm}\label{thm:2}
For every integer $n\geq 63$, there are at least
$2^{(n-54)/4}$ nonisomorphic MN-graphs of order $n$.
\end{thm}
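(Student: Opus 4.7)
The plan is to construct at least $2^{(n-54)/4}$ pairwise nonisomorphic chain graphs of order exactly $n$, each of which is an MN-graph by Theorem~\ref{thm:2.1}. The key flexibility lies in the B-chains: a B-chain of length $b \geq 3$ admits $2^{b-2}$ distinct structures, indexed by the subset $S \subseteq \{1,\ldots,b-2\}$ of positions at which the basic link is replaced as in Fig.~\ref{Fig:2.2}(e). Because subsets of different sizes produce chain graphs of different orders, the strategy is to fix all but one chain and then trade off the length of the one variable chain against the size of its modification subset.

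First, I would read off from Fig.~\ref{Fig:2.1} and Fig.~\ref{Fig:2.2} an explicit linear formula for the order of a chain graph in terms of the A-chain lengths $a_1,a_2,a_3$, the B-chain lengths $b_1,b_2,b_3$, and the sizes $t_i = |S_i|$ of the modification subsets. Next, fix the three A-chains and two of the three B-chains at the minimum length $2$ with empty modification subsets, leaving a single variable B-chain of length $b$ with subset $S$ of size $t$. The constraint that the chain graph has order exactly $n$ then reduces to one linear equation in $b$ and $t$. Solving it with $t\approx (b-2)/2$ and applying Stirling's estimate $\binom{b-2}{\lfloor (b-2)/2\rfloor}\geq 2^{b-2}/(b-1)$ yields at least $2^{(n-54)/4}$ admissible subsets $S$ of size $t$, provided $n\geq 63$. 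The threshold $63$ is the smallest order for which the equation has a solution with $b\geq 3$ and $0\leq t\leq b-2$, and the denominator $4$ in the exponent reflects the vertex cost of a single modification of a B-link in the chosen family.

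Third, one must show that different subsets $S$ of size $t$ yield pairwise nonisomorphic chain graphs. Since the other two B-chains have length $2$ and the variable B-chain has length $b\gg 2$, the variable B-chain is distinguished in any isomorphism of chain graphs by an invariant such as the length of the $B$-to-$\Omega(i)$ path of a given type. Each chain moreover has a distinguished end, namely the one incident with its base link indicated by the arrow in Fig.~\ref{Fig:2.2}(b),(d), so the ordered sequence of link types along the variable B-chain is recoverable from the abstract graph structure up to identity rather than up to reversal. Consequently distinct subsets give distinct oriented link-type sequences, hence nonisomorphic chain graphs, and by Theorem~\ref{thm:2.1} nonisomorphic MN-graphs.

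The main obstacle will be the isomorphism discrimination in step three. While the scaffold's asymmetry (produced by keeping the two other B-chains short) makes it plausible that an automorphism must fix the variable B-chain setwise, one must rigorously rule out every way in which a permutation of the three base vertices $\Omega(1),\Omega(2),\Omega(3)$ with its induced reindexing of the A- and B-chains could identify graphs coming from different subsets. The asymmetric choice of chain lengths is what forces any such permutation to act trivially on the variable B-chain and to preserve its distinguished base-link endpoint, leaving its oriented sequence of link types as a complete invariant and completing the count.
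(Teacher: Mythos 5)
Your overall strategy matches the paper's: build chain graphs of order exactly $n$, use Theorem~\ref{thm:2.1} for non-1-planarity and minimality, and get the exponential count from the $2^{b-2}$ ways of modifying a B-chain of length $b$ as in Fig.~\ref{Fig:2.2}(e). But your count rests on a false premise, and the premise is not incidental -- it forces you into an argument that cannot deliver the stated bound. You assert that ``subsets of different sizes produce chain graphs of different orders,'' i.e.\ that each application of the Fig.~\ref{Fig:2.2}(e) modification costs vertices. It does not: the modification replaces one link by another link on the same vertex set (in effect it flips the orientation of a B-link, recall that every link $L(x,y)$ comes in two versions according to which of $x,y$ plays the role of $z$), so all $2^{b-2}$ modified B-chains of length $b$ have the same number of vertices, namely $4b+1$. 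The paper's count depends on exactly this fact: taking A-chains of lengths $2,2,\ell$ and B-chains of lengths $2,3,t$, \emph{every} choice of modification subset gives a graph of the same order $35+3\ell+4t$, so one gets $2^{t-1}$ nonisomorphic graphs of that one order outright, with no need to select subsets of a fixed size.

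Quantitatively, your route fails even on its own terms. Since a B-chain of length $b$ already costs $4$ vertices per unit of length, you have $b-2\le (n-c)/4$ for a constant $c$; restricting to subsets of a single size $t\approx(b-2)/2$ then yields at most $\binom{b-2}{\lfloor(b-2)/2\rfloor}\approx 2^{b-2}/\sqrt{b-2}$ graphs, and the polynomial loss $1/\sqrt{b-2}$ grows with $n$ and cannot be absorbed into the additive constant $54$ in the exponent. If, as you posit, each modification additionally cost a positive number of vertices, the situation is worse: the constraint $4b+(\mathrm{cost})\cdot t=n-c$ pushes the achievable exponent strictly below $(n-c)/4$ asymptotically. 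So Stirling cannot rescue the count; the only way to reach $2^{(n-54)/4}$ is to use \emph{all} subsets, which requires the modification to be order-preserving. A second, smaller omission: by freezing all chains except one B-chain you can only realize orders in a single residue class modulo $4$; the paper covers all residues by letting one A-chain have length $\ell\in\{2,3,4,5\}$ (each unit of A-chain length costs $3$ vertices), which is also where the threshold $n\ge 63$ comes from. Your step three (nonisomorphism via the asymmetric choice of chain lengths, which pins each B-chain and its base-link end) is sound in spirit and is in fact more detailed than what the paper records.
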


\begin{proof}
The A-chain of length $t$ has $3t+2$
vertices and a B-chain of length $t$ has $4t+1$
vertices. Consider a chain graph whose three A-chains
have length 2, 2, and $\ell\geq 2$, respectively, and
whose B-chains have length 2, 3, and $t\geq 4$,
respectively. The graph has $35+3\ell+4t$ vertices.
Applying the modification shown in
Fig.~\ref{Fig:2.2}(e) to links of the two B-chains of the
graph which have length at least 3, we obtain $2^{t-1}$ nonisomorphic chain graphs
of order $35+3\ell+4t$, where $\ell\geq 2$ and $t\geq
4$. We claim that for every integer $n\geq 63$, there
are integers $2\leq \ell\leq 5$ and $t\geq 4$ such
that $n=35+3\ell+4t$. Indeed, if $n\equiv
0,1,2,3\pmod{4}$, put $\ell=3,2,5,4$, respectively.
If $n=35+3\ell+4t$, where $2\leq \ell\leq 5$, then
$t\geq n/4-50/4$. Hence, there are at
least $2^{\frac{n}{4}-\frac{54}{4}}$ nonisomorphic
chain graphs of order $n\geq 63$. Since every chain
graph is a MN-graph, the theorem follows.
\end{proof}

\section{PN-graphs} \label{Sec:3}

By a \emph{proper} 1-immersion of a graph we mean a
1-immersion with at least one crossing point.
Let us recall that a PN-\emph{graph} is a planar graph
that does not have proper 1-immersions.
In this section we describe a class of PN-graphs and
construct some graphs of the class. They will
be used in Section~\ref{Sec:4} to construct MN-graphs.

For
every cycle $C$ of $G$, denote by $N(C)$ the set
of all vertices of the graph not belonging to $C$ but
adjacent to $C$. Two disjoint edges $vw$ and $v'w'$ of a graph $G$
are \emph{paired} if the four vertices $v,w,v',w'$
are all four vertices of two adjacent 3-cycles (two cycles are \emph{adjacent} if they share an edge).

Following Tutte, we call a cycle $C$ of a graph $G$ \emph{peripheral}
if it is an induced cycle in $G$ and $G-V(C)$ is connected.
If $G$ is 3-connected and planar, then the face boundaries in its
(combinatorially unique) embedding in the plane are
precisely its peripheral cycles.

\begin{thm}\label{thm:3.1}
Suppose that a 3-connected planar graph $G$ satisfies
the following conditions:
\begin{itemize}
\myitemsep
    \item [{\rm (C1)}] Every vertex has degree at least\/ $4$
    and at most\/ $6$.
    \item [{\rm (C2)}] Every edge belongs to at least one
    $3$-cycle.
    \item [{\rm (C3)}] Every $3$-cycle is peripheral $($in other words, there are no separating 3-cycles$)$.
    \item [{\rm (C4)}] Every $3$-cycle is adjacent to at most
    one other $3$-cycle.
    \item [{\rm (C5)}] No vertex belongs to three mutually
    edge-disjoint $3$-cycles.
    \item [{\rm (C6)}] Every $4$-cycle is either peripheral or is
    the boundary of two adjacent triangular faces $($this means that there are no separating 4-cycles$)$.
    \item [{\rm (C7)}] For every $3$-cycle $C$, any two
    vertices of $V(G)\setminus (V(C)\cup N(C))$ are
    connected by four edge-disjoint paths not passing through
    the vertices of $C$.
    \item [{\rm (C8)}] If an edge $vw$ of a
    nontriangular peripheral cycle $C$ is paired with an
    edge $v'w'$ of a nontriangular peripheral cycle
    $C'$, then:
    \begin{enumerate}\myitemsep
       \item [{\rm (i)\,}]
        $C$ and $C'$ have no vertices in common;
        \item [{\rm (ii)}]
        any two vertices $a$ and $a'$ of
        $C$ and $C'$, respectively, such that
        $\{a,a'\}\not\subseteq\{v,w,v',w'\}$ are
        non-adjacent and are not connected by a path
        $aba'$ of length\/ $2$, where $b$ does not
        belong to $C$ and $C'$.
    \end{enumerate}
    \item [{\rm (C9)}] $G$ does not contain the subgraphs
    shown in Fig.~\ref{Fig:3.1} $($in this figure, $4$-valent
    $($resp.\ $5$-valent$)$ vertices of $G$ are encircled
    $($resp.\ encircled twice$)$ and the two starred vertices can be the same vertex$)$.
\end{itemize}
Then $G$ has no proper $1$-immersion.
\end{thm}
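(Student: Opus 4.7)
I would proceed by contradiction. Suppose $G$ admits a proper 1-immersion and, among all such, choose $\varphi$ with the minimum number of crossings. Fix a crossing of edges $e = vw$ and $f = v'w'$. Since $G$ is 3-connected and planar its planar embedding is unique, and by (C3) its 3-cycles are exactly the triangular faces of that embedding. By (C2), pick 3-cycles $T_e = vwa$ containing $e$ and $T_f = v'w'a'$ containing $f$. In a 1-immersion adjacent edges do not cross, so $T_e$ and $T_f$ are embedded as Jordan curves; since $f$ can cross only $e$, the vertices $v'$ and $w'$ lie on opposite sides of $T_e$, and symmetrically $v,w$ lie on opposite sides of $T_f$.

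The first main step is to show that the quadruple $\{v,w,v',w'\}$ must span two adjacent 3-cycles, i.e.\ that $e$ and $f$ are paired. To this end, I would examine the cyclic order of the neighbors of $v$, $w$, and $a$ in $\varphi$ and identify which face of $T_e$ must contain $v'$ (and likewise $w'$). Conditions (C4) (each 3-cycle is adjacent to at most one other 3-cycle), (C5) (no vertex is in three mutually edge-disjoint 3-cycles), and (C7) (four edge-disjoint paths around any 3-cycle) together force the side of $T_e$ containing $v'$ to be a triangular face sharing the edge $e$ with $T_e$, and the same for $w'$. This leaves no room except $\{v,w,v',w'\}$ being the vertex set of two adjacent 3-cycles, so $e$ and $f$ are paired.

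With $e$ and $f$ paired, the local picture around the crossing is determined by the degrees of $v,w,v',w'$ (all in $\{4,5,6\}$ by (C1)) and by which further 3-cycles and 4-cycles meet at these four vertices. Any 4-cycle formed by two paths of length two through $v,v'$ or $w,w'$ would have to be peripheral or bound two adjacent triangular faces by (C6), which severely limits the configurations. Condition (C8) then controls the global interaction: whenever $e$ or $f$ lies on a nontriangular peripheral cycle, the other endpoints $v',w'$ (respectively $v,w$) are kept far from that cycle, which forbids certain shortcuts that would otherwise yield a planar redrawing with fewer crossings, contradicting minimality of $\varphi$.

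The main obstacle, and where I expect the bulk of the work to go, is the final case analysis: after (C1)--(C8) have been applied, the local subgraphs of $G$ still compatible with the crossing $e\times f$ reduce to a short catalogue, and the two configurations that cannot be excluded by the previous conditions are exactly the subgraphs depicted in Fig.~\ref{Fig:3.1}. Since (C9) forbids both of them, we obtain the required contradiction. The delicate point is the bookkeeping in this final enumeration: the degrees of $v,w,v',w'$, the identification possibilities $a=a'$ or $a\in\{v',w'\}$, and the location of the remaining low-degree neighbors must all be tracked carefully to make sure that every surviving configuration is covered by one of the two pictures in Fig.~\ref{Fig:3.1}.
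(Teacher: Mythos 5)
Your first main step---that any two crossing edges $e=vw$ and $f=v'w'$ must be paired---is the crux, and the justification you give for it does not work; in fact the step points the argument in the wrong direction. Conditions (C4), (C5), (C7) do not force the side of $T_e$ containing $v'$ to be a triangular face on the edge $e$. What (C7) actually yields (this is Lemma~\ref{lem:3.3} in the paper) is only that one side of a crossed $3$-cycle contains a \emph{single} vertex, and that this vertex lies in $N(C)$; it says nothing about that vertex being adjacent to both endpoints of $e$, which is what pairing would require. The configurations that are genuinely hard to exclude are precisely the crossings between two \emph{disjoint} $3$-cycles $B$ and $C$ (a $3$-cycle with one vertex inside and some outside), where the four endpoints of the crossing edges are spread over two vertex-disjoint triangles and are certainly not paired. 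The paper's proof shows such a crossing must exist (Lemma~\ref{lem:3.1}: if the two $3$-cycles containing the crossing edges are adjacent --- essentially your paired case --- then by (C4) each crossing edge also lies on a nontriangular peripheral cycle, and (C8) forces a \emph{second} crossing between nonadjacent $3$-cycles), and then spends the bulk of the argument on that disjoint case, building up chains of triangles and quadrilaterals from the single interior vertex until the subgraphs of Fig.~\ref{Fig:3.1} appear and (C9) is violated. So the forbidden subgraphs of (C9) arise from the non-paired configuration you claim to have excluded, not from a residual analysis of paired crossings.

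Concretely: if your step 1 were provable from (C1)--(C8) alone, then (C9) would be needed only for the paired case, but the paper's Subcases 1.1 and 1.2 (Figs.~\ref{Fig:3.5}(c) and \ref{Fig:3.6}(g)) invoke (C9) to kill configurations in which a $3$-cycle $B=vuw$ crosses a disjoint $3$-cycle $C=xyz$. Those configurations satisfy (C1)--(C8) locally, so no amount of bookkeeping with (C4), (C5), (C7) will eliminate them at your step 1. To repair the proof you would need to replace the pairing claim with the paper's actual reduction: first produce a $3$-cycle with a vertex inside and a vertex outside (using (C8) to escape the adjacent-triangles case), then localize via the analogue of Lemma~\ref{lem:3.3}, and then carry out the degree-and-adjacency case analysis that terminates in (C8)(ii) and (C9). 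Your steps 2 and 3, as written, gloss over exactly this analysis.
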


\begin{figure}
\centering
\includegraphics[width=0.7\textwidth]
{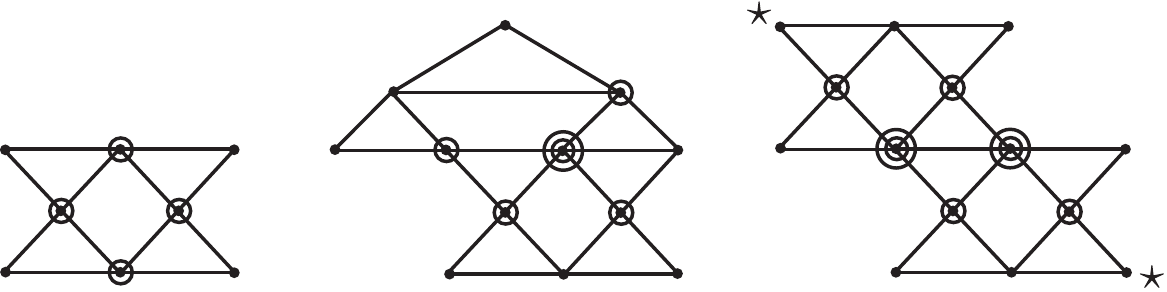} \caption{Forbidden subgraphs.}
 \label{Fig:3.1}
  \end{figure}

\begin{proof}
Denote by $f$ the unique plane
embedding of $G$. Suppose, for a contradiction, that there is a
proper 1-immersion $\varphi$ of $G$. Below we consider the 1-immersion
and show that then $G$ has a subgraph which is excluded by (C8) and (C9),
thereby obtaining a contradiction. In the figures below, the encircled
letter $f$ (resp. $\varphi$) at the top left of a figure means that the
figure shows a fragment of the plane embedding $f$
(resp.\ 1-immersion $\varphi$).

\begin{lem}\label{lem:3.1}
In $\varphi$, there is a $3$-cycle such that there is a vertex inside
and a vertex outside the cycle.
\end{lem}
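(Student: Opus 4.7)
My plan is to exploit the fact that $\varphi$ is proper, hence contains a crossing, and to promote this crossing into an embedded $3$-cycle separating two vertices. Pick two edges $e_1=ab$ and $e_2=cd$ that cross at a point $p$. By (C2), $e_1$ lies in a $3$-cycle $C=abx$; since every $3$-cycle of a $1$-immersion is embedded, $C$ is a Jordan curve. Because $e_2$ takes part in only the crossing at $p$, it meets $C$ transversally in this single point, so its endpoints $c$ and $d$ lie on opposite sides of $C$, provided neither of them is a vertex of $C$. Since adjacent edges do not cross, $\{c,d\}\cap\{a,b\}=\emptyset$, so the only obstruction is $c=x$ or $d=x$. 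Generically, $C$ is already the desired separating $3$-cycle.

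If this fails --- say $c=x$ --- I repeat the argument with $e_2$. By (C2), $e_2$ lies in a $3$-cycle $C'=cdy$, and the symmetric argument shows that $C'$ separates $a$ from $b$ unless, after relabelling, $a=y$. This reduces the problem to a single restrictive configuration: $abc$ and $acd$ are two $3$-cycles of $G$ sharing the edge $ac$, while in $\varphi$ the edge $ab$ crosses $cd$ at $p$. By (C4), no further $3$-cycle of $G$ is adjacent to $abc$ or $acd$, so each of the edges $ab,bc,cd,ad$ lies in a unique $3$-cycle of $G$.

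To dispatch this final case, I would use observation (A) to arrange that $d$ lies outside $abc$ and argue that the interior of $abc$ cannot be empty of vertices. By (C1), vertex $b$ has a neighbour $u\notin\{a,c\}$; by (C2), the edge $bu$ lies in a $3$-cycle $buw$, and (C4) forces $\{u,w\}\cap\{a,c\}=\emptyset$ (otherwise $buw$ would share an edge with $abc$). A local analysis of the direction in which $bu$ leaves the vertex $b$ on the embedded curve $abc$, combined with (C5) (no three edge-disjoint $3$-cycles at a single vertex) and (C7) (four edge-disjoint paths outside any $3$-cycle), forces at least one such neighbour into the interior of $abc$, giving the required separating $3$-cycle. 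The main obstacle is precisely this last step: the routine topological argument reduces the problem to a tight local configuration at the crossing, and converting it into a separating $3$-cycle requires combining the local crossing geometry with the global connectivity hypotheses of Theorem~\ref{thm:3.1}.
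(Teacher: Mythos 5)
Your handling of the generic case is correct and in fact slightly cleaner than the paper's: once $e_2=cd$ is known to meet the embedded $3$-cycle $C=abx$ in exactly the one transversal point $p$, its endpoints are separated by $C$, which replaces the paper's parity argument for two crossing nonadjacent $3$-cycles. Your reduction of the residual case to two adjacent $3$-cycles $abc$ and $acd$ sharing the edge $ac$, with $ab$ crossing $cd$, is also exactly the configuration of Fig.~\ref{Fig:3.2} that the paper must treat separately.

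The gap is in that final case, and you have correctly identified where it is but not closed it. You propose to show that the Jordan curve $abc$ itself traps a vertex in its interior, but no argument is given, and the hypotheses you invoke ((C1), (C2), (C4), (C5), (C7)) do not force this. Since every edge is crossed at most once, an edge can have a point in the open interior of $abc$ only by having an endpoint there, or by emanating from one of $a,b,c$ into the interior and leaving through a single crossing with $ab$, $bc$ or $ca$; so a priori the interior of $abc$ may contain nothing but the arc of $cd$ from $c$ to $p$ (plus arcs of at most two further edges), with every vertex of $G-\{a,b,c\}$ outside, and nothing in your sketch rules out that all the remaining edges at $a$, $b$, $c$ leave into the exterior sector. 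The separating $3$-cycle in this configuration must in general be found elsewhere in the graph, and the tool for locating it is condition (C8) --- the one hypothesis tailored to paired edges of nontriangular peripheral cycles, i.e.\ precisely this situation --- which your proof never uses. The paper's route is: by (C3) and (C4) the edges $ab$ and $cd$ each lie on a nontriangular peripheral cycle, say $C$ and $C'$; these two closed curves cross at $p$ and hence must meet again at some point $\delta$; by (C8)(i) they share no vertex, so $\delta$ is a crossing of two edges, which by (C2) lie on $3$-cycles $D$ and $D'$; and (C8)(ii) guarantees that $D$ and $D'$ are nonadjacent, so the generic separation argument applies to them instead of to $abc$. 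Without this step, or some substitute for it, the proof does not close.
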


\begin{proof} The 1-immersion $\varphi$ has crossing
edges $e$ and $e'$. By (C2), the crossing edges
belong to different 3-cycles. If the 3-cycles are nonadjacent, then we
apply the following obvious observation:

(a) If two nonadjacent 3-cycles $D$ and $D'$ cross each other,
    then there is a vertex of $D$ inside and outside $D'$.

If $e=xy$ and $e'=x'y'$ belong to adjacent 3-cycles $xyy'$
and $x'yy'$, respectively (see Fig.~\ref{Fig:3.2}), then, by (C4),
there are nontriangular peripheral cycles $C$ and $C'$ containing $e$
and $e'$, respectively. The cycles $C$ and $C'$ intersect  at some point
$\delta$ different
from the intersection point of edges $e$ and $e'$. By
(C8)(i), the two cycles do not have a common vertex,
hence $\delta$ is the intersection point of two edges. By (C2),
these two edges belong to some 3-cycles, $D$ and $D'$.
Property (C8)(ii) implies that $D$ and $D'$ are nonadjacent 3-cycles.
By (a), the proof is complete.
\end{proof}

\begin{figure}
\centering
\includegraphics[width=0.2\textwidth]
{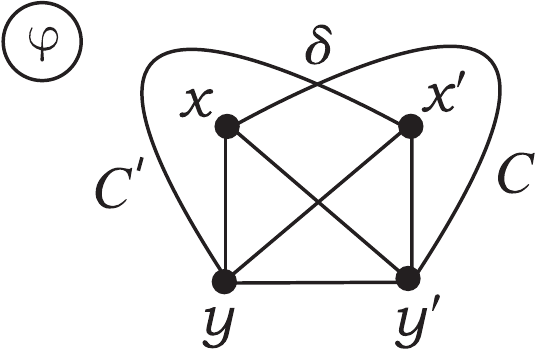} \caption{Crossing two adjacent 3-cycles.}
 \label{Fig:3.2}
  \end{figure}

\begin{lem}\label{lem:3.3}
If\/ $C=u_1u_2u_3$ is a $3$-cycle such that there is a vertex
inside and a vertex outside $C$, then there is
only one vertex inside $C$ or only one vertex outside
$C$, and this vertex belongs to $N(C)$.
\end{lem}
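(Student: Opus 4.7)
The plan is to work in the $1$-immersion $\varphi$ and let $I, O$ denote the non-empty sets of vertices lying strictly inside and outside of $C$, respectively. The starting point is the following simple bound: since each of the three edges of $C$ is crossed at most once in a $1$-immersion, at most three edges of $G$ cross $C$, and each such edge has exactly one endpoint in $I$ and one in $O$. Writing $E(I,O)$ for the set of edges with one endpoint in each of $I$ and $O$, we therefore have $|E(I,O)|\le 3$.

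First I would show that one of $I, O$ is contained in $N(C)$. Assume for contradiction that there exist $x\in I\setminus N(C)$ and $y\in O\setminus N(C)$. Both lie in $V(G)\setminus(V(C)\cup N(C))$, so condition (C7) yields four edge-disjoint $xy$-paths in $G$ that avoid $V(C)$. Since the endpoints lie on opposite sides of $C$ and no path passes through $V(C)$, each such path must contain at least one edge of $E(I,O)$, and by edge-disjointness these four edges are pairwise distinct, contradicting $|E(I,O)|\le 3$. Using observation~(A) to swap inside and outside of $C$ if necessary, we may henceforth assume $O\subseteq N(C)$.

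Next I would establish that $|I|=1$ by a degree-sum argument supplemented by (C4) and (C5). From (C4), no vertex of $V(G)\setminus V(C)$ has three neighbors in $V(C)$, and at most one such vertex has as many as two (in either case one finds two distinct triangles adjacent to $C$, namely $u_iu_jv$ and $u_ju_kv$, or $u_iu_jv$ and $u_iu_jv'$). Combining this with (C1) ($\deg\ge 4$) and the bound $|E(I,O)|\le 3$, summing degrees over $O$ yields $(|O|-1)(|O|-3)\ge 0$, hence $|O|\ge 3$. In the borderline case $|O|=3$ every inequality is forced to equality: $O$ spans a triangle $y_1y_2y_3$, each $y_j$ has exactly one neighbor in $V(C)$, $|I|=2$ with its two vertices adjacent, and $|E(I,O)|=3$. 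Checking the three possible distributions of the three $O$-to-$V(C)$ edges among $u_1, u_2, u_3$, one can always locate three mutually edge-disjoint triangles through some $u_i$ (namely $u_1u_2u_3$, one of the form $u_iy_jy_k$, and one of the form $u_ix_\ell y_m$), contradicting (C5). For $|O|\ge 4$ the count forces many edges inside $O$, and a similar but easier analysis yields a contradiction via either (C4) (through a triangle adjacent to $C$ and to another triangle) or (C5). Hence $|I|=1$; a symmetric argument covers the case in which $I\subseteq N(C)$ instead.

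Finally, with $I=\{x\}$, the unique inside vertex has $\deg(x)\ge 4$ by (C1), no neighbors in $I$, and at most three neighbors in $O$ since $|E(I,O)|\le 3$. Consequently, $x$ has at least one neighbor in $V(C)$, i.e., $x\in N(C)$. The main obstacle is the middle step: the plain degree count is not quite strong enough, and the subtle point is to extract, in each tight configuration, three mutually edge-disjoint triangles through a vertex of $C$ (to invoke (C5)) or a triangle adjacent both to $C$ and to another triangle (to invoke (C4)); this requires a short but fiddly case analysis over the possible distributions of the edges between $O$ and $V(C)$.
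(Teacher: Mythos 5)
Your opening step is exactly the paper's: (C7) plus the observation that at most three edges can cross the $3$-cycle $C$ forces all of $V(G)\setminus(V(C)\cup N(C))$ onto one side, so the minority side is contained in $N(C)$; and the final observation (the lone vertex is in $N(C)$) is fine. The gap is in the middle step. First, a bookkeeping problem: after normalizing so that $O\subseteq N(C)$, the quantity you must show equals $1$ is $|O|$, not $|I|$, and your tight-case description (``$|I|=2$ with its two vertices adjacent'') does not follow from anything. More seriously, the degree-sum inequality is miscomputed: since you yourself allow one vertex of $N(C)$ to have two neighbors on $C$, the edges from $O$ to $V(C)$ are bounded by $|O|+1$, not $|O|$, and the count becomes $4|O|\le |O|(|O|-1)+(|O|+1)+3$, i.e.\ $(|O|-2)^2\ge 0$ --- vacuously true. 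In particular $|O|=2$ is a tight configuration that your case analysis (which starts at $|O|=3$) never addresses. Finally, the proposed contradiction in the tight case via (C5) does not materialize in general: when the three edges from $\{y_1,y_2,y_3\}$ to $V(C)$ go to three \emph{distinct} vertices $u_1,u_2,u_3$, no $u_i$ has two neighbors among the $y_j$, so the triangles $u_iy_jy_k$ you want to exhibit simply do not exist, and one checks that no vertex then lies on three mutually edge-disjoint $3$-cycles.

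The paper closes exactly these holes with a different, local tool that your sketch lacks: peripheral-cycle counting via (C3) and (C6). It proves that for every $Q\subseteq N(C)$ with $|Q|\ge2$ at least four edges leave $Q$ for $V(G)\setminus(V(C)\cup Q)$, by showing (i) a vertex of $N(C)$ adjacent to two vertices of $C$ has no neighbor in $N(C)$ (otherwise a $4$-cycle is forced to be peripheral by (C6), and then two edges at a vertex of $C$ lie on a common peripheral cycle in every pairing, forcing that vertex to have degree $3$, against (C1)); (ii) three pairwise-adjacent vertices of $Q$ are impossible, because some $v_1$ would have each pair of the three edges $v_1u_1,v_1v_2,v_1v_3$ on a common peripheral $3$- or $4$-cycle, again forcing $\deg(v_1)=3$; and (iii) for $|Q|\ge4$ no vertex of $Q$ has three neighbors in $Q$, by the same mechanism. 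The principle ``if every pair among three edges at $v$ lies on a common face, then $\deg(v)=3$'' is what actually kills the tight configurations, including $|O|=2$; without it, or some substitute, your argument does not go through.
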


\begin{proof} By (C7), we may assume that all vertices of
$V(G)\setminus (V(C)\cup N(C))$ lie outside $C$.
Then there can be only vertices of $N(C)$ inside $C$.
To prove the lemma, it suffices to show the following:

(a) For every $Q\subseteq N(C)$, $|Q|\geq 2$,
at least four edges join vertices of
$Q$ to vertices in $V(G)\setminus(V(C)\cup Q)$.

By (C1), every vertex of $Q$ has valence at least 4. By (C4),
every vertex of $N(C)$ is adjacent to at most two vertices of $C$.
We claim that if a vertex $v\in N(C)$ is adjacent to two vertices
$u_1$ and $u_2$ of $C$, then $v$ is not adjacent to other vertices
of $N(C)$. Suppose, for a contradiction, that $v$ is adjacent to
a vertex $w\in N(C)$. Then, by (C4), the vertex $w$ can be adjacent
only to $u_3$ and the 4-cycle $vwu_3u_2$ is not the boundary of two adjacent 3-cycles, hence, by (C6), the 4-cycle is peripheral.
Then, by (C3), any two of the three edges $u_2v$, $u_2u_1$,
and $u_2u_3$ are two edges of a peripheral cycle, hence
$u_2$ has valence 3, contrary to (C1).

Now to prove (a) it suffices to prove the following claim:

\medskip

(b) For every $Q\subseteq N(C)$, $|Q|=1$ (resp. $|Q|\geq 2$), such that
every vertex of $Q$ is adjacent to exactly one vertex of $C$, at least
2 (resp. 4) edges join vertices of
$Q$ to vertices of $V(G)\setminus(V(C)\cup Q)$.

\medskip

The claim is obvious for $|Q|\in\{1,2\}$.
For $|Q|=3$, it suffices to show that the three vertices of $Q$ are
not all pairwise adjacent. Suppose, for a contradiction, that the vertices
$v_1$, $v_2$, and $v_3$ of $Q$ are pairwise adjacent. Then, by (C4),
the three vertices of $Q$ are not adjacent to the same vertex of $C$.
Let $v_1$ and $v_2$ be adjacent to the vertices $u_1$ and $u_2$ of $C$,
respectively. Then any two of the edges $v_1u_1$, $v_1v_2$, and
$v_1v_3$ are two edges of a 3-cycle (peripheral cycle) or a 4-cycle
which is not the boundary of two adjacent 3-cycles, so by (C6),
that 4-cycle is also peripheral. Hence, $v_1$ has valence 3,
contrary to (C1).

For $|Q|\geq 4$, it suffices to show that no vertex of $Q$ is
adjacent to three other vertices in $Q$. Suppose, for a contradiction,
that $v\in Q$ is adjacent to $w_1, w_2, w_3 \in Q$. If $v$ is adjacent to
$u_1$, then the edge $vu_1$ belongs to three cycles $D_1$, $D_2$, and $D_3$
such that for $i=1,2,3$,
the cycle $D_i$ contains edges $vu_1$ and $vw_i$, has length 3 or 4,
and if $D_i$ has length 4, then $D_i$ is not the boundary of two adjacent
3-cycles. By (C3) and (C6), these three cycles are peripheral.
This contradiction completes the proof of (b).
\end{proof}

\medskip

Suppose that a vertex $h$ belongs to two adjacent 3-cycles
$hvw$ and $hvw'$. Since $\deg(h)\ge4$, $h$ is adjacent to a vertex
$u\not\in\{v,w,w'\}$. By (C2), the edge $hu$
belongs to a 3-cycle $huu'$. By (C4),
$u'\not\in\{v,w,w',u\}$. Hence, we have the
following:

\medskip

(B) If an edge $e$ is contained in two 3-cycles of $G$, then both
endvertices of $e$ have valence at least 5.

\medskip

In the remainder of the proof of Theorem \ref{thm:3.1},
we will show that any two crossing
edges of the proper 1-immersion $\varphi$ belong to a subgraph
that is excluded by (C8) and (C9).

By Lemma~\ref{lem:3.1}, there is a 3-cycle
$C=xyz$ such that there is a vertex inside and a
vertex outside $C$. By Lemma~\ref{lem:3.3}, there is only one
vertex $v$ inside $C$ and $v$ is adjacent to $x$.

Now we show that there is a 3-cycle $B=vuw$
disjoint from $C$. Let $x,a_1,a_2,\ldots,a_t$
($t\geq 3$) be all vertices adjacent to $v$. Suppose
there is a 3-cycle $D=va_ib$, where
$b\in\{x,y,z\}$. If $b\in\{y,z\}$, then the 3-cycle
$xvb$ is adjacent to two 3-cycles $C$ and $D$,
contrary to (C4). Hence $D=va_ix$. By (C4), at
most two vertices of $\{a_1,a_2,\ldots,a_t\}$ are
adjacent to $x$. Hence, there is a vertex $a_j$ such
that a 3-cycle $B$ containing the edge $va_j$ is
disjoint from $C$.

Since there is only one vertex $v$ inside $C$,
exactly two edges of $B$ cross edges of $C$. First,
suppose that $B$ separates $x$ from $y$ and $z$
(Fig.~\ref{Fig:3.3}(a)). By (C2), the edge $xv$
belongs to a 3-cycle $R=xva$. If
$a\not\in\{y,z,u,w\}$, then two of the vertices
$y,z,u,w$ lie inside $R$ and the other two vertices
lie outside $R$ (see Fig.~\ref{Fig:3.3}(b)), contrary
to Lemma~\ref{lem:3.3}. So, we may assume, without
loss of generality, that $a=z$
(Fig.~\ref{Fig:3.3}(c)). Then the vertex $x$ belongs
to two adjacent 3-cycles, $C$ and $vxz$, hence, by (B),
$\deg(x)\ge5$. By (C4), $x$ is not adjacent to $u$ or $w$.
Since $x$ is the only vertex inside $B$, $x$ has valence at most 4,
a contradiction. Hence, $B$ can not separate $x$ from
$y$ and $z$.

\begin{figure}
 \centering
 \includegraphics[width=0.7\textwidth]{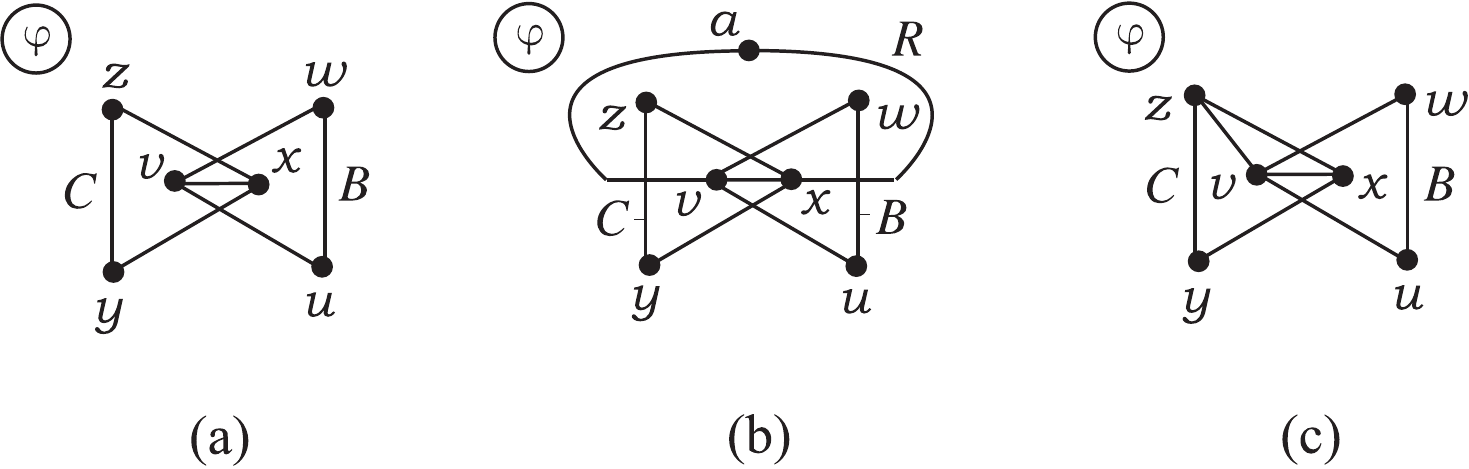}
 \caption{The 3-cycle $B$ separates $x$ from $y$ and $z$.}
 \label{Fig:3.3}
\end{figure}

Now suppose that $B$ separates the vertices $y$ and
$z$, and, without loss of generality, let $z$ lie
inside $B$ (Fig.~\ref{Fig:3.4}(a)). If $z$ is
adjacent to $v$, then, by (B), $z$ has valence at
least 5, hence $z$ is adjacent to a vertex
$b\in\{u,w\}$. But then the 3-cycle $xzv$ is
adjacent to two 3-cycles, $C$ and $vbz$, contrary
to (C4). If $z$ is adjacent to $u$ and $w$, then the
edge $xz$ belongs to three peripheral cycles, $C$,
$xvuz$, and $xvwz$, a contradiction, since
every edge belongs to at most two peripheral cycles.
Hence, since $z$ is the only vertex inside $B$, $z$
has valence 4, $z$ is not adjacent to $v$ and is
adjacent to exactly one of the vertices $u$ and $w$. The vertices $u$ and $w$ are not symmetric in Fig.~\ref{Fig:3.4}(a), so we have to consider two cases.

\emph{Case 1. The vertex $z$ is adjacent to $u$ $($Fig.~\ref{Fig:3.4}$($b$)$$)$.}

\begin{figure}
 \centering
 \includegraphics[width=0.9\textwidth]{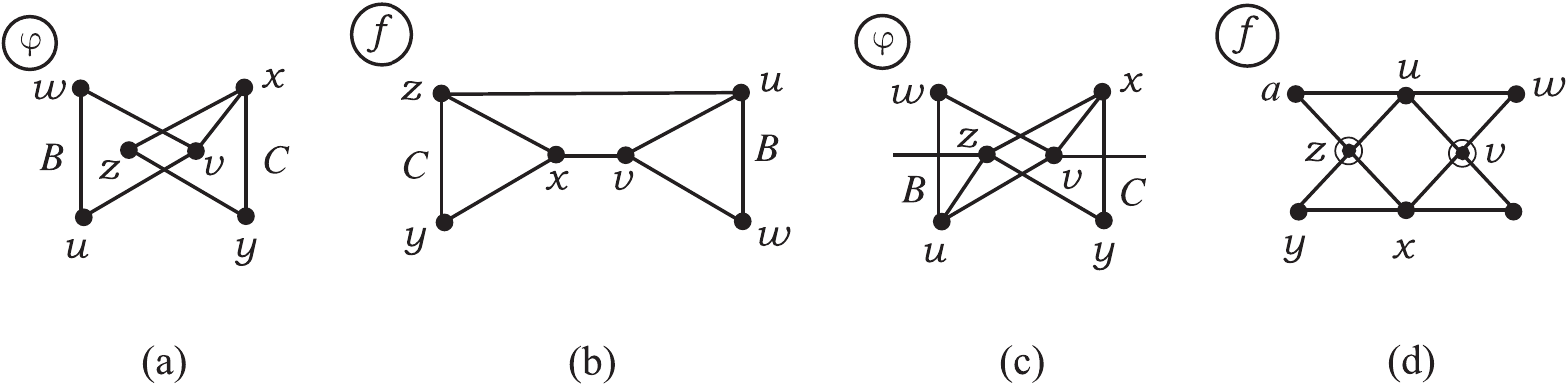}
 \caption{The 3-cycle $B$ separates $y$ and $z$.}
 \label{Fig:3.4}
\end{figure}

Consider the vertex $v$. If $v$ is adjacent to $y$,
then (see Fig.~\ref{Fig:3.4}(b)) $x$ is incident with
exactly three peripheral cycles and has valence 3, a
contradiction. Hence, $v$ is not adjacent to $y$ and
since $v$ is the only vertex inside $C$, $v$ has
valence 4 (see Fig.~\ref{Fig:3.4}(c)). By (C2),
we obtain a subgraph shown in
Fig.~\ref{Fig:3.4}(d). By (C9), at least one of the
vertices $u$ and $x$, say $u$, is not 4-valent.
Then, by (C5), at least one of the edges $au$ and $uw$ belongs
to two 3-cycles. Here we have two subcases to consider.

\emph{Subcase 1.1. The edge $uw$ belongs to two
3-cycles $uwv$ and $uwb$
$($Fig.~\ref{Fig:3.5}$($a$)$$)$.}

\begin{figure}
 \centering
 \includegraphics[width=0.9\textwidth]{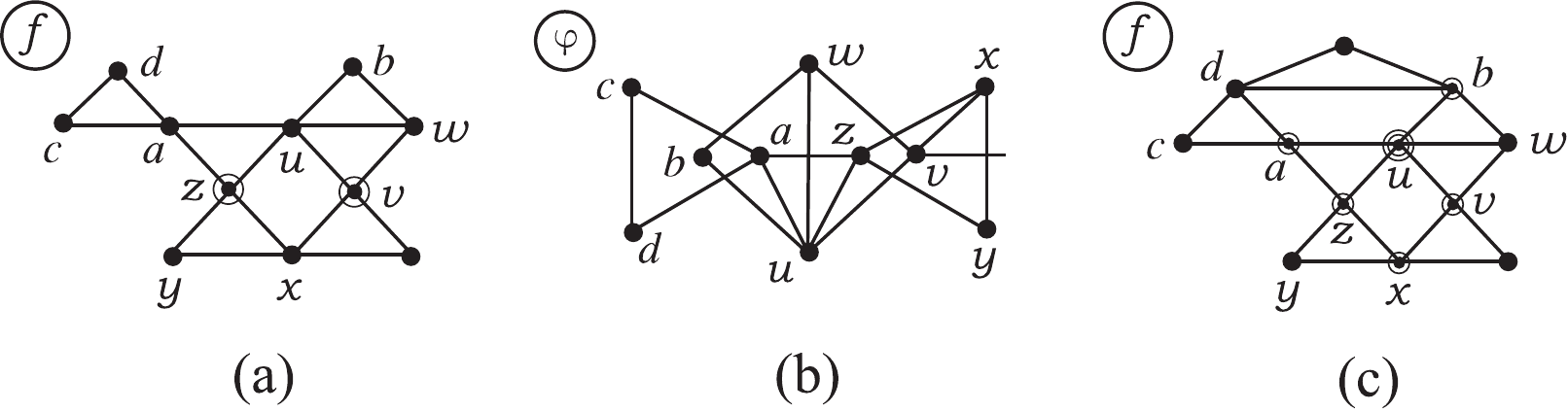}
 \caption{The edge $uw$ belongs to two 3-cycles.}
 \label{Fig:3.5}
\end{figure}

Now, $a$ is the only vertex inside the
3-cycle $uwb$ (Fig. \ref{Fig:3.5}(b)). By (C4),
$a$ is non-adjacent to $w$ and $b$, hence $a$ has
valence 4. The edges $yz$ and $za$
(see Fig. \ref{Fig:3.5}(a)) belong to a
nontriangular peripheral cycle $yzac\ldots$. The
edge $ac$ belongs to a 3-cycle $acd$ and
$b$ is the only vertex inside the 3-cycle
$acd$. The vertex $b$ is not adjacent to $c$,
since the 4-cycle $bcau$ can not be peripheral
(see Fig.~\ref{Fig:3.5}(a)). Since
$b$ has valence at least 4, $b$ is adjacent to $d$
and has valence 4. The 4-cycle $dbua$ is peripheral,
so we obtain a subgraph of $G$ shown in Fig.~\ref{Fig:3.5}(c). Note that, by (C3)--(C6), the vertex at the top of Figure~\ref{Fig:3.5}(c)
is different from all other vertices shown in the figure.
This contradicts (C9).

\emph{Subcase 1.2. The edge $au$ belongs to two
3-cycles $auz$ and $aub$ $($Fig.~\ref{Fig:3.6}$($a$)$$)$.}

Since $b$ has valence at least 4, $b$ lies outside
the 3-cycle $auz$ (Fig.~\ref{Fig:3.6}(b)). The
edges $yz$ and $za$ (resp.\ $wu$ and $ub$)
belong to a nontriangular peripheral cycle $C_1$
(resp.~$C_2$). The cycles $C_1$ and $C_2$ are paired. In
$\varphi$, the crossing point of edges $uw$ and
$az$ is an intersection point of $C_1$ and $C_2$.
The cycles $C_1$ and $C_2$ have at least one other
crossing point, denote this intersection point by
$\delta$. By (C8)(i), $C_1$ and $C_2$ are
vertex-disjoint, hence, $\delta$ is the crossing
point of $C_1$ and an edge $h_1h_2$ of $C_2$
(Fig.~\ref{Fig:3.6}(c)). The edge $h_1h_2$ is not
the edge $uw$ and belongs to a 3-cycle
$h_1h_2h_3$. If $h_3$ belongs to $C_2$, then in the embedding
$f$ the edge $h_1h_3$ is a chord of the embedded peripheral cycle $C_2$
and thus $\{h_1,h_3\}$ is a separating vertex set of $G$. But $G$
is 3-connected, a contradiction. Hence $h_3$ does not belong to $C_2$.

\begin{figure}[t!]
 \centering
 \includegraphics[width=0.95\textwidth]{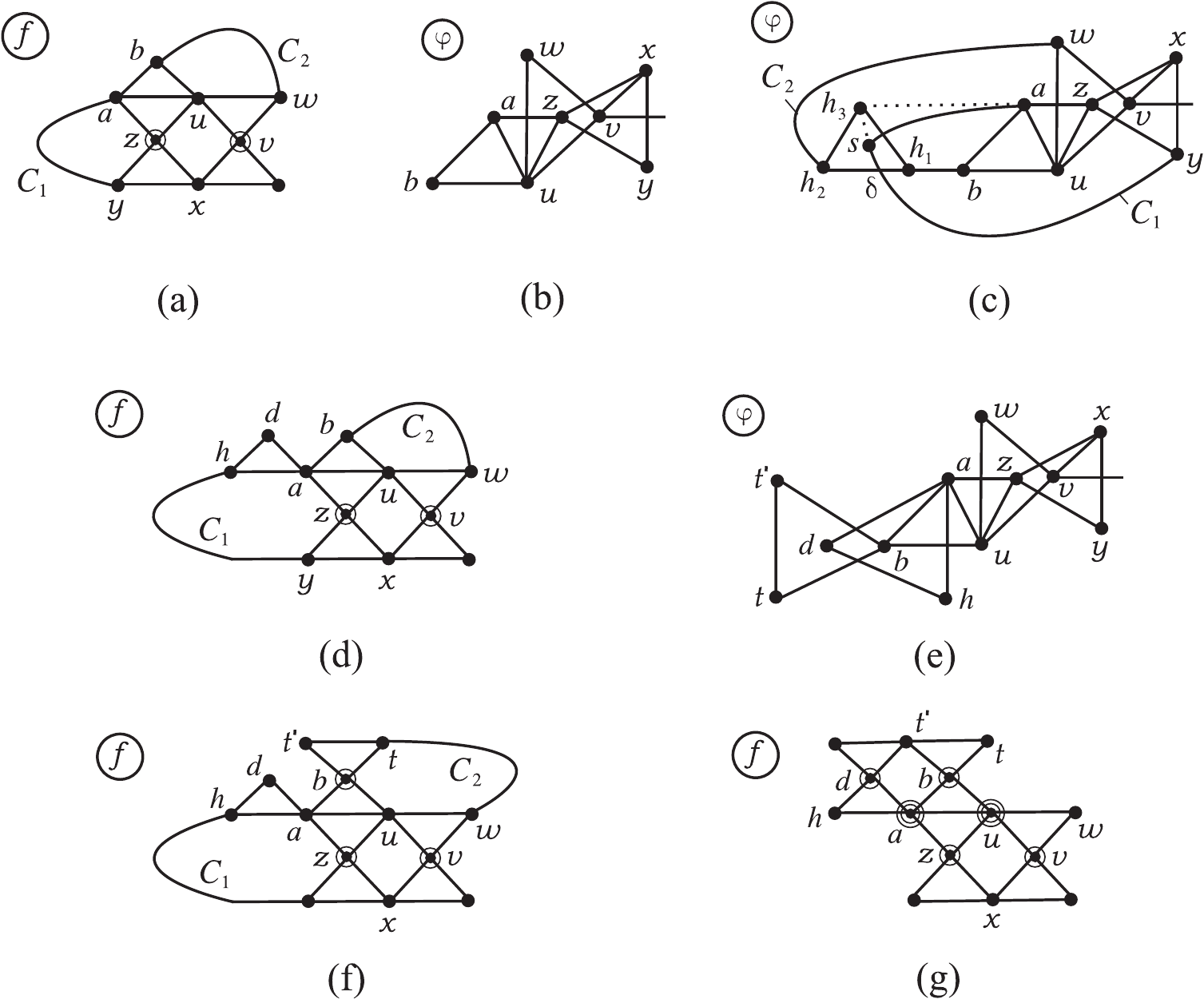}
 \caption{The edge $au$ belongs to two 3-cycles.}
 \label{Fig:3.6}
\end{figure}

Now suppose that $h_1h_2\neq bu$. We have
$h_2\not\in\{a,z,b,u\}$. By (C8)(ii), $h_3\neq a$,
$a$ is not adjacent to $h_2$ and $h_3$, and $C_1$
does not pass through $h_3$ (that is, $h_3$ does
not belong to $C_1$ and $C_2$). Hence, a vertex $s$ of $C_1$ lies
inside the 3-cycle $h_1h_2h_3$ (see Fig.~\ref{Fig:3.6}(c)). By
(B), $\deg(a)\ge5$. If $s=a$, then,
since $s$ is the only vertex inside the 3-cycle
$h_1h_2h_3$, $a$ is adjacent to at least one
of $h_2$ and $h_3$, a contradiction.
Hence, $s\neq a$. Since $z$ is the only vertex inside
the 3-cycle $B$, and the 3-cycle $h_1h_2h_3$ is not $B$
(since $h_1h_2 \neq uw$), we have $s\neq z$. Now, since
$s$ has valence at least 4, $s$ is adjacent to at
least one of the vertices $h_1$, $h_2$, and $h_3$,
contrary to (C8)(ii). Hence $h_1h_2=bu$ and $h_3=a$.

We have $C_1=yzah\ldots$ and the edge $ah$
belongs to a 3-cycle $ahd$
(see Figs.~\ref{Fig:3.6}(d) and (e)). Considering
Fig.~\ref{Fig:3.6}(e), if there is a vertex inside
the 3-cycle $auz$, the vertex has valence at most
3, a contradiction. Hence no edge crosses the edge
$au$. If $h$ lies inside the 3-cycle $aub$,
then, by (C4), $h$ is not adjacent to $b$ and
$u$, $h$ has valence at most 3, a contradiction.
Hence, $h$ lies outside the 3-cycle $aub$ and
$b$ is the only vertex inside the 3-cycle
$ahd$ (see Fig.~\ref{Fig:3.6}(e)). Note that the edge $ah$ belongs to $C_1$, hence $C_1$ does not cross both $bu$ and $ab$.

By (C4), $b$ is not adjacent to $d$ and $h$, hence
$b$ has valence 4 and belongs to a 3-cycle $btt'$
disjoint from $aub$ (Fig.~\ref{Fig:3.6}(f)),
where $C_2=wubt\ldots$. Now $d$ is the only
vertex inside the 3-cycle $btt'$
(Fig.~\ref{Fig:3.6}(e)). By (C8)(ii), $d$ is not
adjacent to $t$. Since $d$ has valence at least 4,
$d$ is adjacent to $t'$ and has valence 4. The
4-cycle $dt'ba$ is peripheral. We obtain a subgraph
of $G$ shown in Fig.~\ref{Fig:3.6}(g), contrary to
(C9). The obtained contradiction completes the proof in the Case 1.

\emph{Case 2. The vertex $z$ is adjacent to $w$.}

This case is dealt in much the same way as the Case 1. Here we describe only what figures will be in the Case 2 instead of the Figs.~\ref{Fig:3.4}-\ref{Fig:3.6} in the Case 1. We hope that the reader is familiar enough with the proof of the Case 1 to supply the missing details himself.

In Figs.~\ref{Fig:3.4}(a), \ref{Fig:3.4}(d), \ref{Fig:3.5}(a), and \ref{Fig:3.5}(c) interchange the letters $u$ and $w$. In Figs.~\ref{Fig:3.4}(c) and \ref{Fig:3.5}(b) replace the edge $uz$ by the edge $zw$. In Figs.~\ref{Fig:3.6}(a), (d), (f), and (g) interchange the letters $u$ and $w$. Figs.~\ref{Fig:3.6E}(b), (c), and (e) are replaced by the Figs.(a), (b), and (c), respectively.

\begin{figure}[t!]
 \centering
 \includegraphics[width=0.95\textwidth]{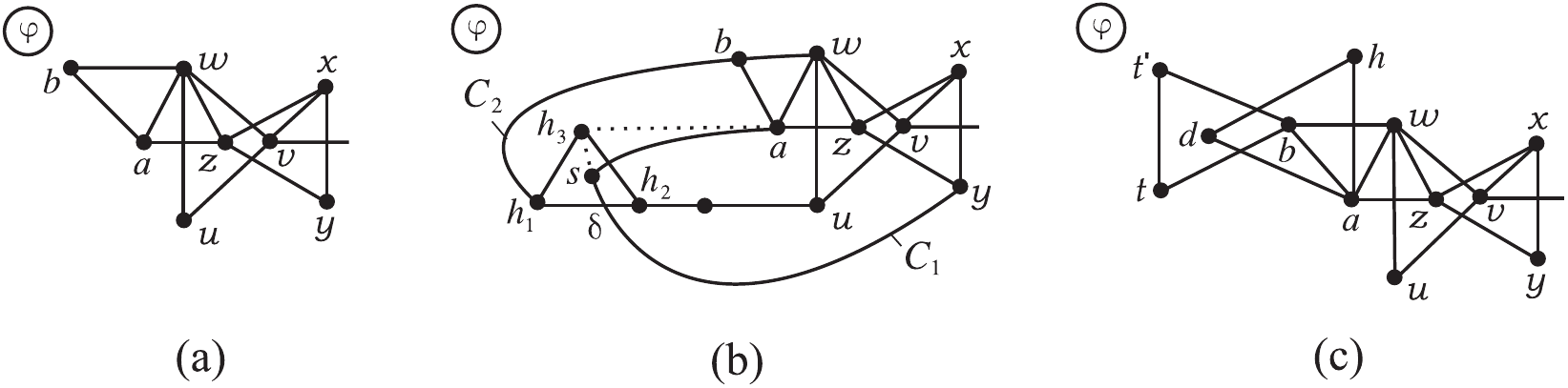}
 \caption{The edge $aw$ belongs to two 3-cycles.}
 \label{Fig:3.6E}
\end{figure}

\end{proof}

Denote by $\mathcal{A}$ the class of all 3-connected
plane graphs $G$ satisfying the conditions (C1)--(C9)
of Theorem~\ref{thm:3.1}. In what follows we show how
to construct some graphs in $\mathcal{A}$ and,
as an example, we shall give two infinite families of graphs in $\mathcal{A}$,
one of which will be used in Section~\ref{Sec:4} to construct MN-graphs.

First we describe a large family of 3-connected plane
graphs satisfying the conditions (C1)--(C6) and
(C8) of Theorem~\ref{thm:3.1}.

Given a 4-valent vertex $v$ of a 3-connected plane
graph, two peripheral cycles $C$ and $C'$ containing $v$ are
\emph{opposite} peripheral cycles at $v$ if $C$ and $C'$ have
 no edges incident with $v$ in common.

Denote by $\mathcal{H}$ the class of all 3-connected (simple) planar graphs $H$ satisfying
the following conditions (H1)--(H4):
\begin{itemize}\myitemsep
    \item [(H1)] Every vertex has valence 3 or 4.
    \item [(H2)] $H$ has no 3-cycles.
    \item [(H3)] Every 4-cycle is peripheral.
    \item [(H4)] For every 4-valent vertex $v$ and for
    any two opposite peripheral cycles $C$ and $C'$ at
    $v$, no edge joins a vertex of $C-v$ to a
    vertex of $C'-v$.
\end{itemize}

A plane graph $G$ is a \emph{medial extension} of a graph $H\in\mathcal{H}$\/ if $G$ is obtained from $H$ in the
following way. The vertex set of $G$ is the set
$\{v(e):e\in E(H)\}$. The edge set of $G$ is defined
as follows. For every 3-valent vertex $v$ of $H$, if
$e_1,e_2,e_3$ are the edges incident with $v$, then
in $G$ the vertices $v(e_1)$, $v(e_2)$, and $v(e_3)$
are pairwise adjacent (the three edges of $G$ are said to
be \emph{associated with\/} $v$).
For every 4-valent vertex $w$ of $H$, if
$(e_1,e_2,e_3,e_4)$ is the cyclic order of edges
incident to $w$ around $w$ in the plane, then $G$
contains the edges of the 4-cycle $v(e_1)v(e_2)v(e_3)v(e_4)$,
and contains either the edge
$v(e_1)v(e_3)$ or the edge $v(e_2)v(e_4)$; these
five edges of $G$ are said to be \emph{associated with\/} the
4-valent vertex of $H$. Note that $G$ can be obtained from
the medial graph of $H$ by adding a diagonal to every 4-cycle
associated with a 4-valent vertex of~$H$.

\begin{lem}\label{lem:3.4}
Every medial extension $G$ of any graph $H\in \mathcal{H}$ is a
3-connected planar graph satisfying the conditions
{\rm (C1)--(C6)} and\/ {\rm (C8)} of Theorem~\ref{thm:3.1}.
\end{lem}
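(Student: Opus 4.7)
The plan is to verify (C1)--(C6) and (C8) in order, starting with those that follow routinely from the construction and ending with the two that use (H2)--(H4) nontrivially. Planarity of $G$ comes from extending the planar embedding of $H$: place $v(e)$ at the midpoint of each edge $e$, draw the edges associated with each vertex of $H$ inside a small disk around that vertex, and add each diagonal inside the ``vertex face'' of its 4-valent vertex. Three-connectivity of $G$ follows by a routine separator argument built on the 3-connectivity of $H$. For (C1), a direct count shows that each endpoint of $e=uv$ contributes two or three neighbors to $v(e)$, depending on whether the endpoint is 3-valent in $H$ or 4-valent with $v(e)$ an endpoint of the chosen diagonal, giving $\deg_G(v(e))\in\{4,5,6\}$.

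The key observation for (C2)--(C5) is that every 3-cycle $v(f_1)v(f_2)v(f_3)$ of $G$ is associated with a single vertex of $H$: otherwise the three pairwise-shared vertices would form a triangle in $H$, forbidden by (H2). Thus every 3-cycle of $G$ is either the triangle at a 3-valent vertex of $H$ or one of the two triangles at a 4-valent vertex of $H$ (those two sharing the added diagonal). From this, (C2) and (C3) are immediate; (C4) holds because the only edge of $G$ lying in two 3-cycles is a diagonal, and those two triangles are the unique pair at their 4-valent vertex; (C5) holds because at each endpoint of $e$ the triangles at $v(e)$ either number at most one or share the diagonal, so a mutually edge-disjoint family has at most two members.

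For (C6) I would classify 4-cycles $v(e_1)v(e_2)v(e_3)v(e_4)$ of $G$ by the vertices of $H$ shared by consecutive $e_i$'s: either all four $e_i$ meet at a common 4-valent vertex of $H$ (so the cycle has the chord added by construction, splitting it into two triangular faces), or the four $e_i$ trace a 4-face of $H$ (so the cycle is peripheral by (H3)); any other configuration forces three edges of $H$ through a common vertex to close into a triangle, contradicting (H2). For (C8), the analysis of (C4) identifies any two adjacent 3-cycles of $G$ as the two triangles at some 4-valent vertex $w$ of $H$; the paired opposite edges then lie on the images $C, C'$ in $G$ of the two opposite faces $F, F'$ of $H$ at $w$. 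The plan is to translate (H4) into $G$: a would-be shared vertex of $C, C'$ comes from an edge common to $F$ and $F'$ in $H$, and any adjacency or length-2 path from $V(C)$ to $V(C')$ through a vertex outside $\{v(e_1),\dots,v(e_4)\}$ (and, for the length-2 case, outside $V(C)\cup V(C')$) forces an edge of $H$ joining $V(F)\setminus\{w\}$ to $V(F')\setminus\{w\}$, which (H4) forbids.

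I expect (C8) to be the main obstacle. The subtle point is that (H4) is purely an edge condition, while (C8)(i) demands that $C$ and $C'$ share no vertices; extracting this from (H4) requires the preliminary observation that (H4) together with the simplicity of $H$ forces $V(F)\cap V(F')=\{w\}$, because at any common vertex $q\ne w$ each of the two edges of $F'$ incident to $q$ would have to go to $w$ (any other target would sit in $V(F')\setminus\{w\}$ and produce an edge from $q\in V(F)\setminus\{w\}$ to $V(F')\setminus\{w\}$), yet simplicity allows only one such edge. The remainder of (C8)(ii) then reduces to a careful case analysis of which vertex of $H$ is shared by the hypothetical adjacency or length-2 path, tracking exactly when the exclusion $\{a,a'\}\not\subseteq\{v,w,v',w'\}$ kicks in.
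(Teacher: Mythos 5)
Your proposal is correct and follows the paper's overall strategy: extend the plane embedding of $H$, observe that every $3$-cycle of $G$ is associated with a single vertex of $H$ because a $3$-cycle spanning several vertices would force a triangle in $H$ (whence (C2)--(C5)), classify $4$-cycles by whether their four edges meet at a $4$-valent vertex of $H$ or trace a $4$-face (whence (C6)), and reduce (C8) to the two opposite faces $F,F'$ of $H$ at the $4$-valent vertex carrying the two adjacent triangles. The one genuine divergence is in (C8)(i) and the adjacency part of (C8)(ii): you derive the key fact $V(F)\cap V(F')=\{w\}$ from (H4) together with simplicity and triangle-freeness of $H$ (the two $F'$-neighbours of a putative common vertex $q\neq w$ would both have to be $w$), whereas the paper obtains both of these parts purely from $3$-connectivity of $H$ (a common edge or a second common vertex of the two faces yields a $2$-separator) and invokes (H4) only for the length-$2$ path whose middle edge has distinct attachment vertices on $F$ and $F'$. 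Both mechanisms are sound; yours makes (C8) an almost direct translation of (H4), while the paper's spreads the burden between $3$-connectivity and (H4). One small inaccuracy in your wording: an adjacency between $V(C)$ and $V(C')$ does not ``force an edge of $H$ joining $V(F)\setminus\{w\}$ to $V(F')\setminus\{w\}$''; it only forces a common endpoint of $e$ and $e'$, hence a common vertex of $F$ and $F'$, and it is your derived fact $V(F)\cap V(F')=\{w\}$ that then places $e,e'$ among the four edges at $w$ and hence inside the excluded set $\{v,w,v',w'\}$. Read that way, the case analysis you defer does close up, including the length-$2$ case, where a middle vertex $v(b)$ with $b$ incident to $w$ would itself lie on $C\cup C'$.
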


\begin{proof}
By the construction of $G$, if
$\{v(e),v(e')\}$ is a separating vertex set of $G$,
then the graph $H-e-e'$ is disconnected, a
contradiction, since $H$ is 3-connected. Hence $G$ is
3-connected. Every peripheral cycle
of $H$ \emph{induces} a peripheral cycle of $G$. It is
easy to see that all peripheral cycles of $G$ that are not
induced by the peripheral cycles of $H$ are the 3-cycles formed by
the edges associated with the vertices of $H$.

It is easy to see that $G$ satisfies (C1)--(C5). To show that $G$
satisfies (C6), let
$J=v(e_1)v(e_2)v(e_3)v(e_4)$ be a 4-cycle of
$G$. By the construction of $G$, if vertices $v(e)$
and $v(e')$ of $G$ are adjacent, then the edges $e$
and $e'$ of $H$ are adjacent, too. Since in $H$ no three edges among $e_1$, $e_2$, $e_3$, $e_4$
form a cycle (by (H2)), these four edges either
form a 4-cycle (in this case $J$ is peripheral) or are the
edges incident to a 4-valent vertex of $H$ (in this
case, by the construction of $G$, $J$ is the boundary
of two adjacent faces of $G$). Hence, $G$ satisfies (C6).

It remains to show that $G$ satisfies (C8). Let $C$ and $C'$
be nontriangular peripheral cycles of $G$ such that an
edge $a$ of $C$ is paired with an edge $a'$ of $C'$.
Then, by the construction of $G$, the peripheral cycles
$C$ and $C'$ are induced by peripheral cycles
$\overline{C}$ and $\overline{C}'$ of $H$,
respectively, that are opposite at some 4-valent
vertex $u$. If $C$ and $C'$ have a common vertex
$v(e)$, then $\overline{C}$ and $\overline{C}'$ have
a common edge $e$, hence $H$ has a separating vertex
set $\{u,w\}$, where $w$ is a vertex incident to $e$,
a contradiction, since $H$ is 3-connected. Now
suppose that $G$ has an edge joining a vertex $v(e)$
of $C$ to a vertex $v(e')$ of $C'$ such that at least
one of the vertices $v(e)$ and $v(e')$ is not
incident to $a$ and $a'$. Then the edges $e$ and $e'$
are incident to the same vertex $w$ of $H$ and the
cycles $\overline{C}$ and $\overline{C}'$ pass through $w$. It follows that $\{u,w\}$ is a separating
vertex set of $H$, a contradiction. Next suppose that
$G$ has a path $v(e)v(b)v(e')$ connecting a vertex
$v(e)$ of $C$ to a vertex $v(e')$ of $C'$ such that
$v(b)$ does not belong to $C$ and $C'$, and at least
one of the vertices $v(e)$ and $v(e')$ is not
incident to $a$ or $a'$. If in $H$ the edges $e$,
$b$, and $e'$ are incident to the same vertex $w$,
then $\{u,w\}$ is a separating vertex set of $H$, a
contradiction. If in $H$ the edges $a$ and $b$ (resp.
$b$ and $e'$) are incident to a vertex $w$ (resp.
$w'$) such that $w\neq w'$, then the edge $b$ joins
the vertex $w$ of $\overline{C}$ with the vertex $w'$
of $\overline{C}'$, contrary to (H4). Hence $G$
satisfies (C8). The proof is complete.
\end{proof}

There are medial extensions of graphs in $\mathcal{H}$ that do
not satisfy conditions (C7) and (C9). In the sequel we shall
describe a way to verify the conditions (C7), and henceforth
give examples of graphs satisfying (C1)--(C9).
To show that a medial extension $G$ of $H\in \mathcal{H}$
satisfies (C7) it is convenient to proceed in the
following way. Subdivide every edge $e$ of $H$ by a
two-valent vertex $v(e)$ of $G$. We obtain a graph
$\overline{H}$ whose vertex set is $V(H)\cup V(G)$
where the vertices of $V(G)$ are all 2-valent
vertices of $\overline{H}$. We will consider paths of
$G$ associated with paths of $\overline{H}$
connecting 2-valent vertices.

Two paths $P$ and $P'$ of $\overline{H}$ are \emph{H-disjoint} if
$P \cap P'\cap V(H) = \emptyset$, i.e.,\ $P \cap P'\subseteq V(G)$.

\begin{figure}
\centering
\includegraphics[width=0.7\textwidth]{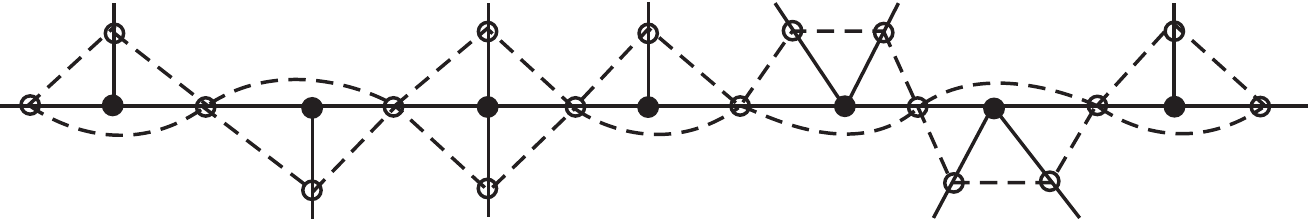}
\caption{Two paths of $G$ associated with a path of $\overline{H}$.}
 \label{Fig:3.7}
  \end{figure}

Consider a path $P=v(e_1)w_1 v(e_2)w_2 v(e_3)\ldots w_{n-1}v(e_n)$
in $\overline{H}$ where $w_1,\allowbreak w_2,\ldots,w_{n-1}$ are
the $H$-vertices on $P$. It is easy to see that the edges of $G$
associated with the vertices $w_1,w_2,\ldots,w_{n-1}$
of $H$ contain two edge-disjoint paths connecting in $G$
the vertices $v(e_1)$ and $v(e_n)$ (see
Fig.~\ref{Fig:3.7}); any two such paths in $G$ are said to be
\emph{associated} with the path $P$ of $\overline{H}$.
Since $H$ has no multiple edges, every edge of $G$ is
associated with exactly one vertex of $H$. Hence, if
$P$ and $P'$ are $H$-disjoint paths in
$\overline{H}$, each of which is connecting 2-valent
vertices, then every path in $G$ associated with $P$
is edge-disjoint from every path in $G$ associated
with $P'$. As a consequence, we have the following conclusion:
{\sloppy

}

\medskip

(C) If $\overline{H}$ has a cycle
    containing 2-valent vertices $v(e)$ and
    $v(e')$, then $G$ has four edge-disjoint paths
    connecting $v(e)$ and $v(e')$.

\medskip

The fact that a path in $H$ gives rise to two edge-disjoint paths
in $G$ (paths associated with the path of $H$) can be used to
check the property (C7) of~$G$.

For a 3-cycle $C$ of $G$, a path of $\overline{H}$ is
\emph{$C$-independent} if the path does not
contain vertices of $C$. When checking (C7) for a medial
extension $G$ of $H\in \mathcal{H}$, given a 3-cycle $C$ of $G$
and two 2-valent vertices $x,y\in V(G)\setminus (V(C)\cup
N(C))\subset V(\overline{H})$, four $C$-independent
edge-disjoint paths $P_1$, $P_2$, $P_3$, and $P_4$ of $G$ connecting $x$ and $y$ in $G$ will be represented in some
subsequent figures (see, e.g., Figure \ref{Fig:3.9})
in the following way. The edges of the paths incident to
vertices of $N(C)$ are depicted as dashed edges joining 2-valent vertices, the dashed edges are not
edges of $\overline{H}$ (see, for example, Fig.~\ref{Fig:3.9}(b),
where the edges of $H$ are given as solid lines). All other edges
of the paths are represented by paths in $\overline{H}$. If $X$
is a subpath of $P_i$ such that $X$ is associated with a path
$\overline{X}$ in $\overline{H}$, then $X$ is
represented by a dashed line passing near the edges
of $\overline{X}$ in $\overline{H}$. If $X_i$ and $X_j$ are
subpaths of $P_i$ and $P_j$ (where possibly $i=j$), respectively,
such that $X_i$ and $X_j$ are edge-disjoint paths associated with a path
$\overline{X}$ of $\overline{H}$, then $X_i$ and
$X_j$ are represented by two (parallel) dashed lines passing
near the edges of $\overline{X}$ in $\overline{H}$.
Using these conventions, the reader will be able to check that the
depicted dashed paths and edges in the figure of
$\overline{H}$ represent four $C$-independent
edge-disjoint paths of $G$ connecting $x$ and $y$.

Now we describe some graphs in $\mathcal{A}$. Let us recall that
graphs in $\mathcal{A}$ are precisely those 3-connected planar graphs
that satisfy conditions (C1)--(C9). To simplify the arguments, we construct
graphs with lots of symmetries so that, for example, to
check the condition (C7) we will have to consider only
two 3-cycles of a graph.

\begin{figure}[t!]
\centering
\includegraphics[width=0.9\textwidth]
{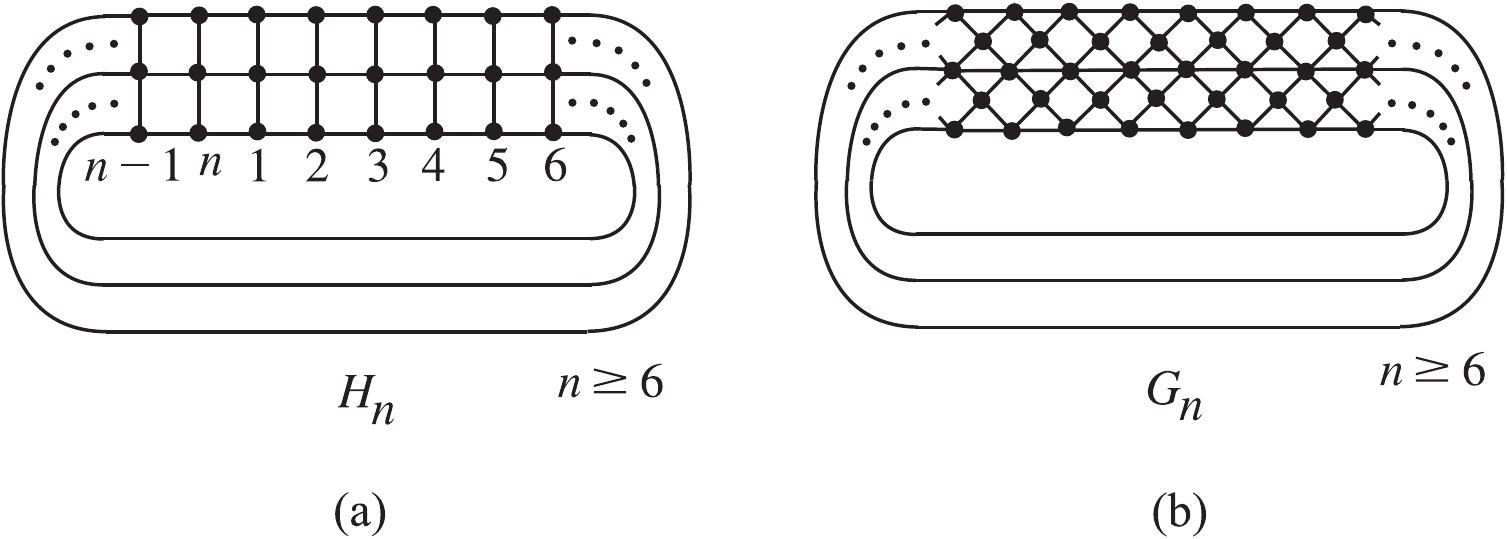} \caption{The graph $H_n\in \mathcal{H}$
and its extension $G_n$.}
 \label{Fig:3.8}
  \end{figure}

For $n\ge6$, let $H_n\in \mathcal{H}$ be the Cartesian product
of the path $P_3$ of length 2 and the cycle $C_n$ of length $n$
(see \ref{Fig:3.8}(a)). Let $G_n$ be the medial extension of $H_n$ shown in Fig.~\ref{Fig:3.8}(b).

\begin{lem}
\label{lem:Extra1}
Each graph $G_n$, $n\geq 6$, is a PN-graph.
\end{lem}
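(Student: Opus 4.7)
The plan is to show $G_n$ satisfies conditions (C1)--(C9) of Theorem~\ref{thm:3.1}, from which it follows immediately that $G_n$ admits no proper 1-immersion, and is therefore a PN-graph (since planarity is built into the construction and $G_n$ has a canonical plane embedding inherited from $H_n$).

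First I would verify that $H_n \in \mathcal{H}$. Conditions (H1) and (H2) are immediate from the fact that in the Cartesian product $P_3 \square C_n$ the top and bottom row vertices are 3-valent, the middle row vertices are 4-valent, and the graph is triangle-free because neither factor has a triangle. For (H3), the 4-cycles of $P_3 \square C_n$ are exactly the $2n$ product-square faces of the natural embedding on the cylinder (closed off to the sphere), which are peripheral. For (H4), at each 4-valent middle vertex $w$, the two opposite peripheral 4-cycles at $w$ lie one in the top band and one in the bottom band of the cylinder, so no edge of $H_n$ connects them away from $w$. Having established $H_n\in\mathcal{H}$, Lemma~\ref{lem:3.4} provides (C1)--(C6) and (C8) for $G_n$ at once.

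The remaining work is verifying (C7) and (C9). For (C7), I would exploit the rotational and top-bottom symmetry of $G_n$: the 3-cycles fall into just a few orbits, namely the triangles arising from the 3-valent top vertices, those arising from 3-valent bottom vertices (symmetric to the top), and the pairs of diagonal-triangles arising from each middle 4-valent vertex of $H_n$. For each orbit representative $C$, I would identify $V(C) \cup N(C)$, then for any two vertices $x,y$ in its complement exhibit four edge-disjoint $C$-independent $(x,y)$-paths by working in $\overline{H_n}$: using observation (C) above, I would produce pairs of edge-disjoint paths in $G_n$ associated with $C$-independent paths of $\overline{H_n}$ that use routes around the cylinder in both directions and up/down between bands, which is easy given that $H_n$ is 3-connected and highly symmetric. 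The pictorial dashed-path convention introduced before the lemma makes this a visual check in each case.

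For (C9), I would observe that each of the forbidden subgraphs in Fig.~\ref{Fig:3.1} forces a rather rigid local configuration around specific 4- and 5-valent vertices together with certain adjacencies between distinct triangular and quadrilateral faces. I would enumerate the degrees occurring in $G_n$ (which come from the construction: vertices $v(e)$ where $e$ joins two 3-valent vertices of $H_n$ have degree~$4$; vertices $v(e)$ joining a 3-valent to a 4-valent vertex have degree $4$ or $5$ depending on the diagonal; vertices $v(e)$ joining two middle vertices have degrees in $\{4,5,6\}$ depending on the two diagonals) and locate the possible positions of 4-valent and 5-valent vertices relative to the faces. A short case analysis, again reduced by symmetry, shows that the adjacency pattern of the forbidden subgraphs cannot be realized inside the medial extension.

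The main obstacle I anticipate is the verification of (C7): the condition requires not just 3-edge-connectivity of $G_n - V(C)$ between given pairs, but \emph{four} edge-disjoint paths avoiding $V(C)$, and the inside/outside structure of a 3-cycle limits which sides of the cylinder are available. Carrying this out cleanly depends on choosing the right collection of paths in $\overline{H_n}$ and then invoking the ``two-paths per $H$-path'' principle of observation (C); the pictures in $\overline{H_n}$ will do the heavy lifting, while (C9) and the reduction to Theorem~\ref{thm:3.1} are comparatively routine.
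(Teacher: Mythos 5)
Your proposal follows essentially the same route as the paper: invoke Lemma~\ref{lem:3.4} to get (C1)--(C6) and (C8) from $H_n\in\mathcal{H}$, dispose of (C9) by examining the degrees of vertices of $G_n$ and their incidence with the 4-gonal faces (the paper does this very quickly by noting that $G_n$ has no 5-valent vertices and every 4-gonal face meets a 6-valent vertex), and verify (C7) by reducing to a few symmetry classes of 3-cycles and exhibiting four $C$-independent edge-disjoint paths via observation (C) and the dashed-path pictures in $\overline{H_n}$. The only substantive caveat is that the (C9) and (C7) checks depend on the particular choice of diagonals in Fig.~\ref{Fig:3.8}(b), so the case analysis you defer to must be carried out for that specific medial extension, exactly as the paper does.
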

\begin{proof}
We show that $G_n$ satisfies (C1)--(C9) for every $n\geq 6$.

By Lemma~\ref{lem:3.4}, $G_n$
satisfies (C1)--(C6) and (C8). Every 4-gonal face of
$G_n$ is incident with a 6-valent vertex and $G_n$ has
no 5-valent vertices, hence $G_n$ satisfies (C9).

Now we show that $G_n$ satisfies (C7). Consider a
fragment of $\overline{H_n}$ shown in
Fig.~\ref{Fig:3.9}(a) (in the figure we introduce
notation for some vertices and also depict in dashed
lines some edges of $G_n$). Because of the symmetries of
$G_n$, it suffices to consider the following two
cases for a 3-cycle $C$, when checking (C7):

\begin{figure}
\centering
\includegraphics[width=1.0\textwidth]{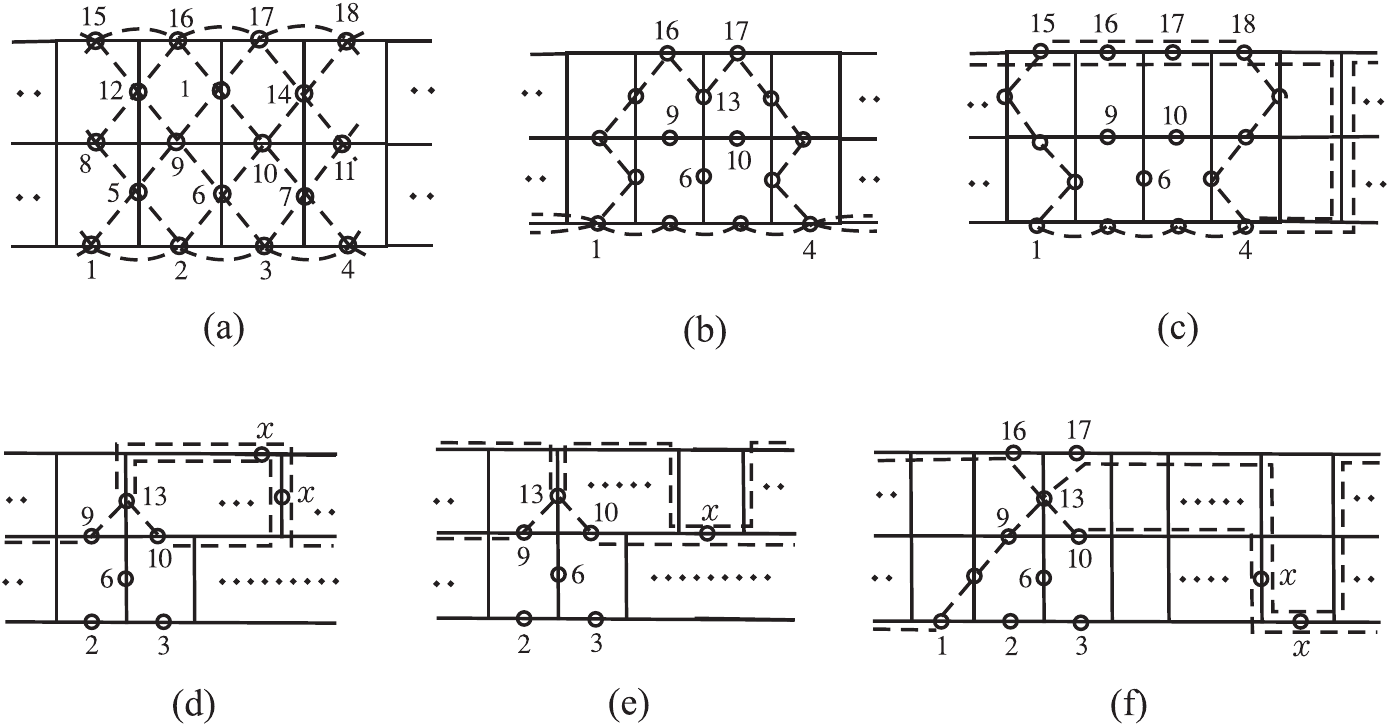}
\caption{Verifying (C7) for the graph $G_n$.}
 \label{Fig:3.9}
  \end{figure}

\medskip

\noindent
\emph{Case 1. $V(C)=\{6,9,10\}$.}

Then $N(C)=\{2,3,5,7,8,11,12,13,14\}$. If we delete from
$\overline{H_n}$ the vertices
$1,2,\ldots,14$, then the obtained graph has only one
connected component $U$ with a vertex in $H_n$, and
$U$ is 2-connected. Hence, any two 2-valent
vertices $x$ and $y$ of $U$ belong to a cycle in $U$
and then, by (C), $G_n$ has four $C$-independent
edge-disjoint paths connecting $x$ and $y$.
Fig.~\ref{Fig:3.9}(b) shows four $C$-independent
edge-disjoint paths in $G_n$ connecting vertices $1$ and $4$.
If we delete from $\overline{H_n}$ the vertices
$\{1,2,\ldots,18\}\setminus \{1,4\}$, then in the
resulting non-trivial connected component,
for every vertex $x\in V(G_n)\setminus \{1,2,\ldots,18\}$, there is a path
$P$ connecting the vertices $1$ and $4$, and passing through $x$;
combining two edge-disjoint paths of $G_n$ associated
with $P$ and the two edge-disjoint paths connecting
$1$ and $4$, shown in Fig.~\ref{Fig:3.9}(b),
we obtain four $C$-independent
edge-disjoint paths connecting the vertices $4$ and
$x$ (and, analogously, for the vertices $1$ and $x$).
Now, because of the symmetries of $\overline{H_n}$, it
remains to show that there are four $C$-independent
edge-disjoint paths connecting the vertex $4$ with each of the
vertices $15,16,17,18$; Fig.~\ref{Fig:3.9}(c) shows
the paths (since $n\ge 6$).

\medskip

\noindent
\emph{Case 2: $V(C) = \{2,3,6\}$.}

We have $N(C)=\{1,4,5,7,9,10\}$. If we delete from
$\overline{H_n}$ the vertices $1,2,\ldots,7$ and $9,10,13$, then
the obtained graph has only one connected component
$U$ with at least two vertices and $U$ is 2-connected. Hence any two vertices $x$ and $y$ in $\overline{H_n}$ that
are both in $U$ belong to a cycle of $U$ and then, by (C),
$G_n$ has four $C$-independent edge-disjoint paths
connecting $x$ and $y$. It remains to show that
for every vertex $x$ of $\overline{H_n}$ belonging to $U$, there are four $C$-independent edge-disjoint paths connecting $x$
and the vertex $13$. These four paths are shown in
Figs.~\ref{Fig:3.9}(d)--(f), depending on the choice of $x$.
We conclude that $G_n$ satisfies (C1)--(C9), hence $G_n$
is a PN-graph for every $n\ge6$.
\end{proof}

\begin{figure}
\centering
\includegraphics[width=0.9\textwidth]{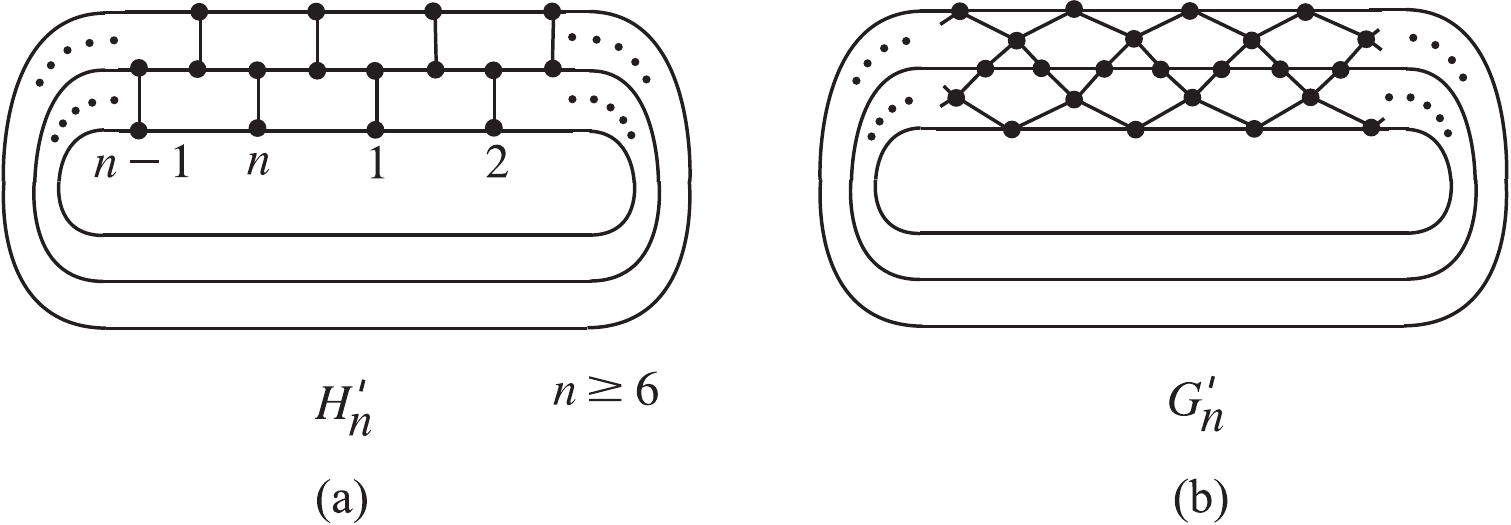}
\caption{The graph $H_n'\in \mathcal{H}$ and its extension $G_n'$.}
 \label{Fig:3.10}
  \end{figure}

Fig.~\ref{Fig:3.10} gives another example of an
extension $G_n'$ of a graph $H_n'\in \mathcal{H}$. By
Lemma~\ref{lem:3.4}, $G_n'$ satisfies (C1)--(C6) and
(C8). Since $G_n'$ has no 4-cycles, it
satisfies (C9). Using the symmetry of $\overline{H_n'}$, one can easily check that for every 3-cycle $C$ of
$G_n'$, if we delete from $\overline{H_n'}$ the vertices
$V(C)\cup N(C)$ of $G_n'$, then the obtained graph has
only one connected component $U$ with at least two
vertices and $U$ is 2-connected. Then, by (C), any
two vertices of $G_n'$ in $U$ are connected by four
$C$-independent edge-disjoint paths. Hence, $G_n'$
satisfies (C7) and is a PN-graph.

\section{MN-graphs based on PN-graphs}\label{Sec:4}

In this section we construct MN-graphs based on the
PN-graphs $G_n$ described in Section~\ref{Sec:3}.

For $m\geq 2$, denote by $S_m$, the graph shown in
Fig.~\ref{Fig:4.1}. The graph has $m+1$ disjoint cycles of
length $12m-2$ labelled by $B_0,B_1,\ldots,B_m$ as
shown in the figure. The vertices of $B_0$ are called
the \emph{central vertices} of $S_m$ and are labelled
by $1,2,\ldots,12m-2$ (see Fig.~\ref{Fig:4.1}). For
every central vertex $x\in\{1,2,\ldots,12m-2\}$,
denote by $x^*$ its ``opposite'' vertex $x+(6m-1)$ if
$x\in\{1,2,\ldots,6m-1\}$ and the vertex $x-(6m-1)$
if $x\in\{6m,6m+1,\ldots,12m-2\}$. In $S_m$, any pair
$\{x,x^*\}$ of central vertices is connected by a
\emph{central path} $P(x,x^*)$ of length $6m-3$ with
$6m-4$ two-valent vertices. There are exactly $6m-1$ central paths.

\begin{figure}[htb]
\centering
\includegraphics[width=0.7\textwidth]
{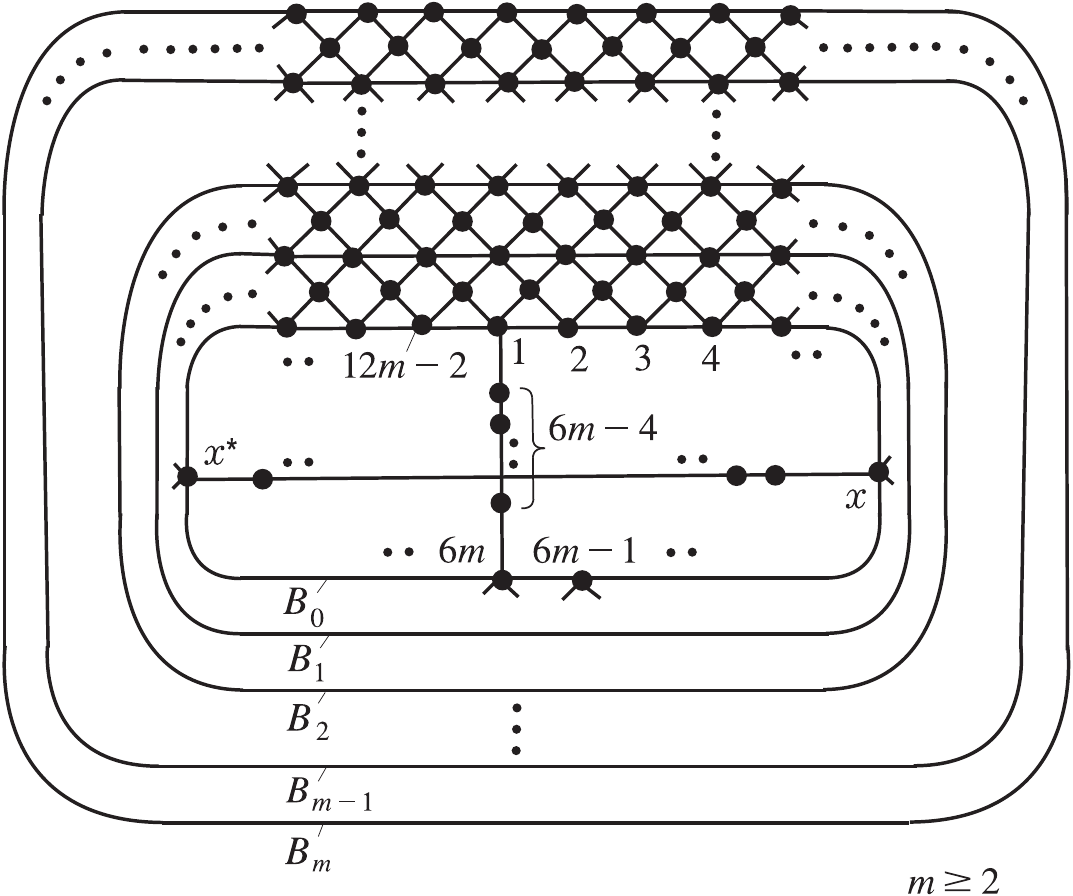} \caption{The graph $S_m$.}
 \label{Fig:4.1}
  \end{figure}

For any integers $m\geq 4$ and $n\geq 0$, denote by
$\Phi_m(n)$ the set of all $(12m-2)$-tuples
$(n_1,n_2,\ldots,n_{12m-2})$ of nonnegative integers
such that $n_1+n_2+\cdots+n_{12m-2}=n$. For every
$\lambda\in\Phi_m(n)$, denote by $S_m(\lambda)$ the
graph obtained from $S_m$ by replacing, for every central vertex
$x\in\{1,2,\ldots,12m-2\}$, the eight edges
marked by short crossings in Fig.~\ref{Fig:4.2}(a)
by $8(1+n_x)$ new edges marked by crossings
in Fig.~\ref{Fig:4.2}(b) (the value $x+1$ in that figure
is to be considered modulo $12m-2$). The graph $S_m(\lambda)$ has $m-2$\quad
$(12m-2)$-cycles $B_0,B_1,\ldots,B_{m-3}$ and three
$(12m-2+n)$-cycles $B_{m-2},B_{m-1},B_m$.

\begin{figure}
\centering
\includegraphics[width=0.7\textwidth]
{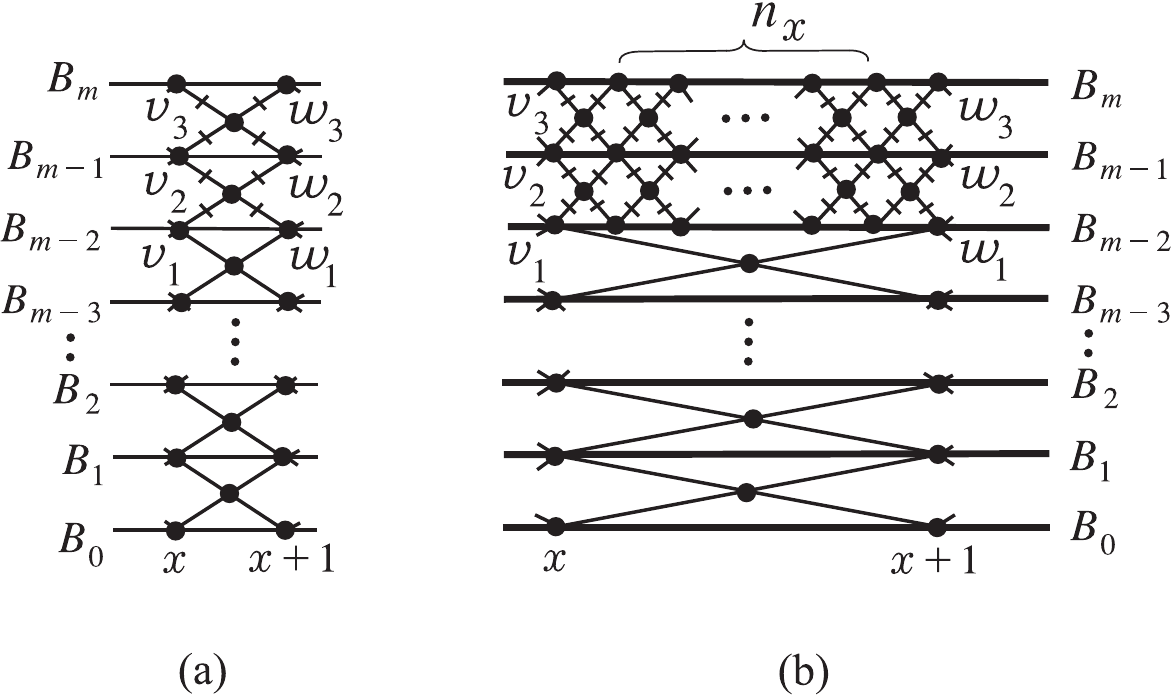} \caption{Obtaining the graph $S_m(\lambda)$.}
 \label{Fig:4.2}
  \end{figure}

We want to show that for every $m\geq 4$ and for
every $\lambda\in \Phi_m(n)$, $n\geq 0$, the graph
$S_m(\lambda)$ is an MN-graph.

\begin{lem}\label{lem:4.1}
For every $m\geq 4$, $n\geq 0$ and $\lambda\in\Phi_m(n)$,
the graph $S_m(\lambda)-e$ is\/ $1$-planar for every edge $e$.
\end{lem}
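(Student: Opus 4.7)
The plan is to establish the lemma by exhibiting, for each edge $e$ of $S_m(\lambda)$, an explicit $1$-immersion of $S_m(\lambda)-e$. The first step is to exploit symmetry: $S_m(\lambda)$ admits both the dihedral action rotating/reflecting the central cycle $B_0$ (together with the compatible action on $B_1,\ldots,B_m$ and the central paths) and a symmetry permuting the ``levels'' $B_i$, and the multiplied edges at each central vertex $x$ are interchangeable within their bundle of $8(1+n_x)$ parallel copies. Using this, I would reduce to a short list of representative edges $e$: (i) an edge of some outer cycle $B_i$ with $i\ge 1$; (ii) one of the multiplied edges joining $B_i$ and $B_{i+1}$ (including the special cases $i=0$ and $i=m-1$); (iii) an edge of the central cycle $B_0$; (iv) an edge of one of the $6m-1$ central paths $P(x,x^*)$.

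My baseline drawing is the natural concentric one: draw $B_0$ as a convex polygon in the middle, draw the outer cycles $B_1,\ldots,B_m$ as nested cycles outside $B_0$, and route the $6m-1$ central paths through the interior of $B_0$. The multiplied edges at each central vertex $x$ run parallel and can be drawn without crossings among themselves; only the central paths inside $B_0$ are forced to produce crossings. The main technical observation I would use is that any two central paths $P(x,x^*)$ and $P(y,y^*)$ can be drawn so they cross in exactly one point, realized as a single edge--edge crossing (each central path has $6m-4$ internal $2$-valent vertices, which gives plenty of flexibility to schedule where it crosses each other central path). The global challenge is that each edge of a central path must host at most one of these crossings.

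The heart of the argument is therefore to show that, after removing any single edge $e$, the $6m-1$ central paths admit a collective routing in which every edge is crossed at most once. For each of the cases above I would give the construction explicitly, analogous to Fig.~\ref{Fig:2.3}(b),(c) in Lemma~\ref{lem:1.1}: in cases (i) and (ii), the removal of $e$ frees one outer arc that can be used to re-route one central path around $B_0$ on the outside, which eliminates exactly the number of ``excess'' crossings that would otherwise force some edge to be crossed twice; in case (iii), removing an edge of $B_0$ opens up a gate that lets two central paths enter/leave the disk bounded by $B_0$ without creating a doubly-crossed edge; in case (iv), the deletion simply removes one of the central paths (up to topological equivalence), reducing the problem to an easier instance.

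The step I expect to be the main obstacle is case (iv) together with case (iii): producing an explicit crossing pattern for the $6m-1$ central paths inside $B_0$ such that each edge is crossed at most once. I would handle this by a combinatorial scheduling argument where each central path $P(x,x^*)$ is assigned, to each of its internal edges, at most one ``partner'' central path; the large number of internal edges ($6m-3$ of them, which exceeds $6m-2$, the number of other central paths, once one edge has been deleted or one path removed) ensures that such a schedule exists. This is the point where the parameter choice $m\ge 4$ matters, since it gives enough internal edges per path to accommodate all pairwise crossings plus the one extra required by the non-central part of $e$, and it is also the point where the multiplication $\lambda$ is irrelevant, because the $n_x$ added parallel edges all lie near the boundary of $B_0$ and do not interfere with the interior routing.
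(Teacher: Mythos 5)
Your overall strategy coincides with the paper's: if $e$ lies on a central path, draw the remaining $6m-2$ central paths inside $B_0$ (each has $6m-3$ edges and must cross exactly the other $6m-3$ paths, so every edge carries exactly one crossing); otherwise use the deletion to send exactly one central path $P(x,x^*)$ outside $B_0$ and draw the other $6m-2$ inside. However, two of your steps have genuine gaps. First, the quantitative heart of the ``escape'' cases is missing. Any arc from a central vertex $x$ to the outer face must cross the $3m-1$ edge-disjoint cycles $B_1,\ldots,B_m$ and $C_2,\ldots,C_{2m}$ (where $C_i$ consists of the edges of type $i$), so a path routed from $x$ out, around, and back in to $x^*$ is forced to make $6m-2$ crossings while having only $6m-3$ edges. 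Hence the deleted edge must lie on one of these cycles \emph{and} sit in the radial corridor of the escaping path; the right move is to choose which central path escapes as a function of the location of $e$, not to normalize $e$ by symmetry. Indeed, your claimed symmetry ``permuting the levels $B_i$'' does not exist --- the automorphism group of $S_m$ is only the dihedral group $D_{12m-2}$ --- so your reduction to one representative edge per level-type is not justified, and the assertion that removal of $e$ ``eliminates exactly the number of excess crossings'' is exactly the computation that needs to be done (the paper does it in Fig.~\ref{Fig:4.3}, including two exceptional edges for which the vertex $x$ itself must be repositioned).

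Second, your case (iii) (an edge of $B_0$) would fail as described. Deleting an edge of $B_0$ does not reduce the number of cycles separating a central vertex from the outer face, so it does not by itself enable a full radial escape; and letting a central path make an excursion into the annulus between $B_0$ and $B_1$ forces it to cross both paths associated with every central vertex it bypasses, which costs about two crossings per bypassed vertex --- far more than the $6m-3$ edges available if the excursion is to change the interleaving pattern of the antipodal endpoint pairs. So the ``gate for two central paths'' does not remove the parity obstruction that forces every pair of central paths inside $B_0$ to cross. The paper instead treats such edges among the exceptional (dotted) cases of Fig.~\ref{Fig:4.3}(a): the central vertex $x$ is re-placed as in Fig.~\ref{Fig:4.3}(b) so that the single path $P(x,x^*)$ can still be drawn outside $B_0$ with exactly $6m-3$ crossing points. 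Your case (iv) and the in-disk scheduling of the $6m-2$ remaining paths are fine (generic diameters of a disk realize the required arrangement), and match the paper.
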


\begin{proof}
If we delete an edge of a central path,
then the remaining $6m-2$ central paths, each with
$6m-3$ edges, can be 1-immersed inside $B_0$ in
Fig.~\ref{Fig:4.1}. If we delete one of the edges
shown in Fig.~\ref{Fig:4.3}(a) by a thick line, then
the central path $P(x,x^*)$ can be drawn outside
$B_0$ with $6m-3$ crossing points as shown in the figure
and then the remaining $6m-2$ central paths can be
1-immersed inside $B_0$. If we delete one of the two
edges depicted in Fig.~\ref{Fig:4.3}(a) by a dotted
line, then Fig.~\ref{Fig:4.3}(b) shows how to place
the central vertex $x$ so that the path $P(x,x^*)$
can be drawn outside $B_0$ with $6m-3$ crossing
points (analogously to Fig.~\ref{Fig:4.3}(a)) and
then the remaining $6m-2$ central paths can be
1-immersed inside $B_0$. This exhibits all possibilities
for the edge $e$ (up to symmetries of $S_m$) and henceforth
completes the proof.
\end{proof}

\begin{figure}
\centering
\includegraphics[width=0.9\textwidth]{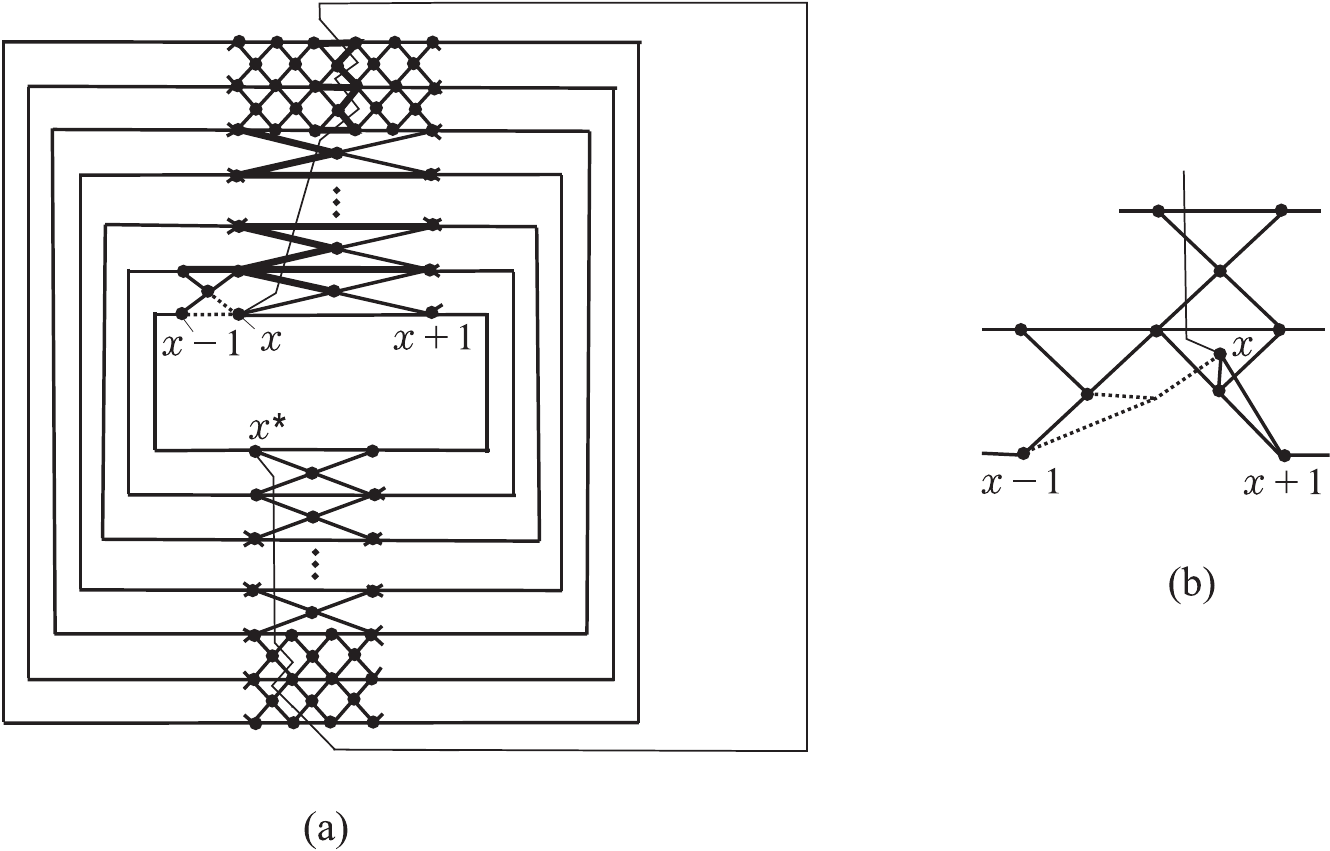}
\caption{The central path $P(x,x^*)$ immersed outside $B_0$.}
 \label{Fig:4.3}
  \end{figure}

Given a 1-immersion of a graph $G$ and an embedded cycle $C$,
we say that $G$ lies \emph{inside} (resp.\ \emph{outside}) $C$,
if the exterior (resp.\ interior) of $C$ does not contain vertices
and edges of $G$.

Denote by $J_{m-2}$ the graph obtained from the graph
$S_m$ in Fig.~\ref{Fig:4.1} by deleting the
2-valent vertices of all central paths and by deleting
all vertices lying outside the cycle $B_{m-2}$.

\begin{lem}\label{lem:4.2}
For every $m\geq 4$, $J_{m-2}$ is a PN-graph.
\end{lem}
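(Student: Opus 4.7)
The plan is to show that $J_{m-2}$ satisfies conditions (C1)--(C9) of Theorem~\ref{thm:3.1}. Since $J_{m-2}$ is planar (being a subgraph of the planar $S_m$), this will imply that $J_{m-2}$ admits no proper 1-immersion, i.e.\ it is a PN-graph.

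First, I would identify $J_{m-2}$ as a medial extension of a suitable graph $H^\ast\in\mathcal{H}$. From Fig.~\ref{Fig:4.1}, the ``rings'' between consecutive cycles $B_i$ and $B_{i+1}$ (for $i=0,1,\ldots,m-3$) have exactly the pattern of the extensions considered in Section~\ref{Sec:3}: each 4-valent vertex of $H^\ast$ corresponds to a pair of adjacent triangles in $J_{m-2}$, and each 3-valent vertex corresponds to a single triangle. The resulting base graph $H^\ast$ is a planar 3-connected graph (essentially $P_{m-2}\,\square\,C_{12m-2}$ with appropriately decorated boundary), easily seen to satisfy (H1)--(H4) provided $m\geq 4$. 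Then Lemma~\ref{lem:3.4} immediately gives conditions (C1)--(C6) and (C8) for $J_{m-2}$.

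Next, I would dispose of (C9) by direct inspection of the figure: every 4-face of $J_{m-2}$ is incident with a vertex shared between that 4-face and an adjacent triangle, which has valence $6$ in $J_{m-2}$, and $J_{m-2}$ contains no 5-valent vertices at all. Hence none of the forbidden configurations of Fig.~\ref{Fig:3.1} can appear.

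The bulk of the work lies in verifying (C7). Following the approach used for $G_n$ in Lemma~\ref{lem:Extra1}, I would introduce $\overline{H^\ast}$ by subdividing every edge of $H^\ast$ with its associated vertex of $J_{m-2}$, and exploit observation (C) together with the dihedral rotational symmetry of $J_{m-2}$. Up to this symmetry, the 3-cycles of $J_{m-2}$ fall into only a few orbits: triangles incident with $B_0$, triangles sitting between two interior rings $B_i,B_{i+1}$ with $0<i<m-3$, and triangles incident with the boundary cycle $B_{m-2}$. For each orbit representative $C$, I would show that removing $V(C)\cup N(C)$ from $\overline{H^\ast}$ leaves a large 2-connected component $U$; any two 2-valent vertices of $U$ then lie on a common cycle of $U$, so (C) yields the required four edge-disjoint $C$-independent paths in $J_{m-2}$. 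The few remaining pairs of vertices (those in $N(C)$ or close to the boundary $B_{m-2}$) are handled by exhibiting explicit four edge-disjoint $C$-independent paths, drawn in the dashed-line convention of Fig.~\ref{Fig:3.9}.

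The hard part will be (C7) for triangles near $B_{m-2}$, where the ``outer partner'' rings present in the infinite-band analogue $G_n$ have been deleted, so one loses the symmetry that made $U$'s 2-connectivity automatic in Case 1 of Lemma~\ref{lem:Extra1}. I expect this case to require a small number of explicit path figures analogous to Fig.~\ref{Fig:3.9}(c)--(f), using the fact that $m-2\geq 2$ so at least two interior rings remain available to route the paths. Once this boundary case is handled, the interior cases follow by the same routine verification as in the proof of Lemma~\ref{lem:Extra1}, and the conclusion is that $J_{m-2}\in\mathcal{A}$, hence it is a PN-graph.
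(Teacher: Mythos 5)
Your strategy is viable, but it is not the route the paper takes, and it is considerably more laborious. The paper's proof of Lemma~\ref{lem:4.2} never re-verifies (C1)--(C9) for $J_{m-2}$. Instead it observes that $J_{m-2}$ is covered by $m-3$ overlapping subgraphs $L_1,\ldots,L_{m-3}$, each isomorphic to $G_{12m-2}$ (with $L_i$ spanning the three consecutive rings $B_{i-1},B_i,B_{i+1}$) and hence each a PN-graph by Lemma~\ref{lem:Extra1}. In any 1-immersion of $J_{m-2}$ each $L_i$ is therefore plane-embedded in its unique way; since $L_i$ and $L_{i+1}$ share the two rings $B_i,B_{i+1}$ and everything between them, these embeddings are forced to be mutually consistent, and working outward ring by ring one concludes that the whole of $J_{m-2}$ is embedded without crossings. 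Your plan --- exhibiting $J_{m-2}$ as the medial extension of $P_{m-2}\,\square\,C_{12m-2}$, invoking Lemma~\ref{lem:3.4} for (C1)--(C6) and (C8), checking (C9) by degree inspection, and verifying (C7) via observation (C) and explicit path systems --- would establish the a priori stronger statement that $J_{m-2}\in\mathcal{A}$, but at the cost of redoing, for a longer cylinder, essentially the entire verification of Lemma~\ref{lem:Extra1}; that verification (especially (C7)) is exactly the part you leave as ``to be done,'' and it is the bulk of the work. One remark on the case you single out as hard: the triangles near $B_0$ and $B_{m-2}$ sit inside the literal copies $L_1$ and $L_{m-3}$ of $G_{12m-2}$, whose boundary rings coincide with $B_0$ and $B_{m-2}$, so the local verification there is verbatim the one already carried out for $G_n$ in Lemma~\ref{lem:Extra1}; the additional rings of $J_{m-2}$ only add connectivity, so no genuinely new boundary case arises. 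The trade-off is clear: your route is self-contained modulo a long but routine check and yields membership in $\mathcal{A}$; the paper's route is a few lines, reuses Lemma~\ref{lem:Extra1} as a black box, and explains why the lemma is stated only as ``$J_{m-2}$ is a PN-graph'' rather than ``$J_{m-2}\in\mathcal{A}$.''
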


\begin{proof} The graph $J_{m-2}$ contains $m-3$
subgraphs $L_1,L_2,\ldots,L_{m-3}$ isomorphic to the
PN-graph $G_{12m-2}$ such that for
$i=1,2,\ldots,m-1$, the graph $L_i$ contains the
cycles $B_{i-1}$, $B_i$, and $B_{i+1}$. Consider an
arbitrary 1-immersion $\varphi$ of $J_{m-2}$. Suppose
that in the plane embedding of the PN-graph $L_1$ in
$\varphi$, the cycle $B_2$ is the boundary cycle of
the outer $(12m-2)$-gonal face of the embedding. Then
the embedding of $L_1$ determines an embedding of the
subgraph of $L_2$ bounded by the cycles $B_1$ and
$B_2$. Since $L_2$ is a PN-graph, the subgraph of
$L_2$ bounded by $B_2$ and $B_3$ lies outside the
cycle $B_2$. Reasoning similarly, we obtain that for
$i=3,4,\ldots,m-3$, the subgraph of the PN-graph
$L_i$ bounded by $B_i$ and $B_{i+1}$ lies outside
$B_i$. As a result, $\varphi$ is a plane embedding of
$J_{m-2}$, hence $J_{m-2}$ is a PN-graph.
\end{proof}

 Denote by $\overline{S}_m(\lambda)$ the graph
 obtained from $S_m(\lambda)$, where $m\geq
 4$ and $\lambda\in \Phi_m(n)$, by deleting the
 2-valent vertices of all central paths.

 \begin{lem}\label{lem:4.3}
 For every $m\geq 4$, $n\geq 0$ and\/ $\lambda\in \Phi_m(n)$,
 $\overline{S}_m(\lambda)$ is a PN-graph.
\end{lem}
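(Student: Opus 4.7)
The plan is to extend the argument of Lemma~\ref{lem:4.2} by exhibiting an overlapping chain of PN-subgraphs whose union is all of $\overline{S}_m(\lambda)$ and whose successive constraints force any 1-immersion to be a plane embedding. The key observation is that the $\lambda$-modification of Figure~\ref{Fig:4.2} only alters the structure adjacent to the three outermost cycles $B_{m-2},B_{m-1},B_m$, so the part of $\overline{S}_m(\lambda)$ inside the cycle $B_{m-3}$ coincides with the corresponding part of $J_{m-2}$.

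Concretely, for $1\leq i\leq m-4$, the subgraph $L_i$ of Lemma~\ref{lem:4.2} spanning the cycles $B_{i-1},B_i,B_{i+1}$ lies inside $B_{m-3}$ and is unchanged in $\overline{S}_m(\lambda)$; by Lemma~\ref{lem:Extra1} each such $L_i$ is isomorphic to the PN-graph $G_{12m-2}$. I would then construct three additional subgraphs $L_{m-3}^*,L_{m-2}^*,L_{m-1}^*$ in the outer region, each spanning three consecutive $B$-cycles. These are medial extensions of cylindrical graphs in $\mathcal{H}$ obtained from $P_3\times C_{12m-2}$ by subdividing certain edges in accordance with $\lambda$, so that the affected bounding cycles have the correct length $12m-2+n$. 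By Lemma~\ref{lem:3.4} each $L_j^*$ automatically satisfies conditions (C1)--(C6) and (C8) of Theorem~\ref{thm:3.1}; condition (C9) is verified just as for $G_n$, since every 4-cycle in $L_j^*$ remains incident with a 6-valent vertex and the $\lambda$-subdivisions do not create configurations matching Figure~\ref{Fig:3.1}; and condition (C7) is handled by the same dashed-path technique used in the proof of Lemma~\ref{lem:Extra1}, with the extra edges introduced by $\lambda$ only making the required four edge-disjoint path systems easier to produce.

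With the chain $L_1,\ldots,L_{m-4},L_{m-3}^*,L_{m-2}^*,L_{m-1}^*$ of PN-subgraphs in hand---consecutive members share a common $B$-cycle, and their union is all of $\overline{S}_m(\lambda)$---the propagation argument of Lemma~\ref{lem:4.2} goes through verbatim: in any 1-immersion $\varphi$ of $\overline{S}_m(\lambda)$, the restriction to $L_1$ is a plane embedding, which fixes the embedding of $B_2$; at each successive step the next block is a PN-graph in which one bounding cycle is already embedded, forcing it to be embedded with the correct nesting; iterating outward forces $\varphi$ to be a plane embedding of the whole graph, so $\overline{S}_m(\lambda)$ is a PN-graph. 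The main obstacle is the verification that the modified outer blocks $L_j^*$ actually satisfy (C7), since unlike the inner blocks they incorporate the non-uniform $\lambda$-subdivisions. Because the modification inserts extra edges and vertices in a uniform pattern within each radial strip, a single generic diagram displaying four $C$-independent edge-disjoint paths that crosses one such strip will suffice, with the remaining strips handled by the rotational symmetry of $\overline{S}_m(\lambda)$.
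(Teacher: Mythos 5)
Your overall strategy---propagate the forced plane embedding outward through a chain of overlapping three-ring PN-blocks, using that the $\lambda$-modification is confined to the outer rings---is the right instinct, but the chain breaks exactly at the transition between the unmodified and modified regions, and the proposed repair does not work. The modification of Fig.~\ref{Fig:4.2} lengthens all three outer cycles $B_{m-2},B_{m-1},B_m$ to $12m-2+n$, while $B_{m-3}$ keeps length $12m-2$. Consequently your block $L_{m-3}^*$ (spanning $B_{m-4},B_{m-3},B_{m-2}$) contains the $n$ extra vertices of $B_{m-2}$ as vertices of degree $2$ inside the block (their remaining incident edges leave toward $B_{m-1}$), so it violates (C1) and Theorem~\ref{thm:3.1} simply does not apply to it; the same mismatch of ring lengths afflicts $L_{m-2}^*$. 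The proposed route via Lemma~\ref{lem:3.4} cannot rescue this: a graph obtained from $P_3\times C_{12m-2}$ by subdividing edges has $2$-valent vertices and hence fails (H1), so it is not in $\mathcal{H}$, and in any case the medial extension of a subdivided graph is not a subdivision of the medial extension. So the two transitional blocks are not PN-graphs and cannot be certified as such by any lemma in the paper; ``four edge-disjoint path'' diagrams cannot fix a degree condition. (A smaller point: your outermost block, spanning $B_{m-2},B_{m-1},B_m$, is literally $G_{12m-2+n}$ and is already a PN-graph by Lemma~\ref{lem:Extra1}; no re-verification of (C1)--(C9) is needed there.)

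The paper avoids the transitional blocks altogether. It splits $\overline{S}_m(\lambda)$ into just two overlapping pieces: $G\cong G_{12m-2+n}$ on the outer three rings (a genuine PN-graph, hence embedded in any $1$-immersion $\varphi$), and $G'$, the inner part, which is only \emph{homeomorphic} to the PN-graph $J_{m-2}$ (its $2$-valent subdivision vertices all lie on $B_{m-2}$). It then shows, by counting edge-disjoint paths, that no $3$- or $4$-gonal face of the embedded $G$ can contain a vertex of $G'-B_{m-2}$ and that no such vertex lies outside $B_m$, so all of $G'$ is forced into the single face of $G$ bounded by $B_{m-2}$; since no edge of $B_{m-2}$ is then crossed, the $2$-valent vertices can be suppressed to yield a proper $1$-immersion of $J_{m-2}$, contradicting Lemma~\ref{lem:4.2}. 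If you want to salvage your write-up, replace $L_{m-3}^*,L_{m-2}^*,L_{m-1}^*$ by this single outer block $G_{12m-2+n}$, treat everything inside $B_{m-2}$ as one subdivided copy of $J_{m-2}$, and supply the face-location argument; the ``suppress the $2$-valent vertices'' step must also be justified by first showing $B_{m-2}$ is uncrossed, since suppressing a $2$-valent vertex whose two incident edges are both crossed would not yield a $1$-immersion.
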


\begin{proof}
The graph $\overline{S}_m(\lambda)$
contains a subgraph $G$ isomorphic to the PN-graph
$G_{12m-2+n}$ and contains a subgraph $G'$
homeomorphic to the PN-graph $J_{m-2}$. The graph $G$
contains the cycles $B_{m-2}$, $B_{m-1}$, and $B_m$
of $\overline{S}_m(\lambda)$, and the graph $G'$
contains the cycles $B_0,B_1,\ldots,B_{m-2}$ of
$\overline{S}_m(\lambda)$ and is obtained from $J_{m-2}$
by subdividing the edges of the cycle $B_{m-2}$
(by using $n$ 2-valent vertices in total).

Consider, for a contradiction, a proper 1-immersion
$\varphi$ of $\overline{S}_m(\lambda)$. In $\varphi$,
the graph $G$ has a plane embedding and we shall investigate in which faces of the embedding of $G$ lie the
vertices of $G'$. We shall show that they all lie in the
face of $G$ bounded by the (subdivided) cycle $B_{m-2}$.

In the graph $\overline{S}_m(\lambda)$ the cycles $B_{m-2}$ and $B_i$, $i\in\{0,1,\ldots,m-3\}$ are connected by $24m-4$ edge-disjoint paths. This implies that
no 3- or 4-gonal face of $G$ contains all vertices of $B_i$
in its interior.

Any two vertices of $B_i$ are connected by
six edge-disjoint paths in $G'-B_{m-2}$. Therefore:

(a) No 3- or 4-gonal face of $G$ contains any vertex of the cycles $B_i$, $i=0,1,\ldots,m-3$, in its interior.

Suppose that a vertex $v$ of $G'$ does not belong to the cycles $B_i$, $i=1,2,\ldots,m-2$, and lies inside a 3- or 4-gonal face $F$ of $G$. By construction of $G'$, the vertex $v$ is adjacent to two vertices $w$ and $w'$ of some $B_j$, $j\in\{0,1,\ldots,m-3\}$. By (a), $w$ and $w'$ do not lie inside $F$, hence they lie, respectively, in faces $F_1$ and $F_2$ of $G$ adjacent to $F$. However, at least one of $F_1$ and $F_2$ is 3- or 4-gonal, contrary to (a). Therefore, no 3- or 4-gonal face of $G$ contains any vertex of $G'-B_{m-2}$. If there is a vertex of $G'-B_{m-2}$ outside $B_m$, then $G'$ has two adjacent vertices such that one of them is either a vertex of $B_{m-2}$ or lies inside $B_{m-2}$, and the other vertex lies outside $B_m$, a contradiction, since the edge joining the vertices crosses at least 5 edges of $G$. This implies that all vertices of $G'-B_{m-2}$ lie inside the face of $G$ bounded by $B_{m-2}$.
Hence $G'$ lies inside $B_{m-2}$ and has a proper 1-immersion in $\varphi$. If in this 1-immersion of $G'$ we ignore the 2-valent vertices
on the cycle $B_{m-2}$ of $\overline{S}_m(\lambda)$,
then we obtain a proper 1-immersion of the PN-graph
$J_{m-2}$, a contradiction.
\end{proof}

By the paths of
$\overline{S}_m(\lambda)$ associated with any central
vertex $x$ we mean the two paths shown in
Fig.~\ref{Fig:4.4}; one of them is depicted in thick
line and the other in dashed line. Every edge of $\overline{S}_m(\lambda)$ not belonging
to the cycles $B_0,B_1,\ldots,B_m$ is assigned a
\emph{type} $t\in\{1,2,\ldots,2m\}$ as shown in
Fig.~\ref{Fig:4.4} such that for $i=1,2,\ldots,m$, the
edges of type $2i-1$ and $2i$ are all edges lying
between the cycles $B_{i-1}$ and $B_i$, and the edges
of type $2i-1$ (resp.\ $2i$) are incident to vertices
of $B_{i-1}$ (resp.\ $B_i$).

\begin{figure}
\centering
\includegraphics[width=0.35\textwidth]{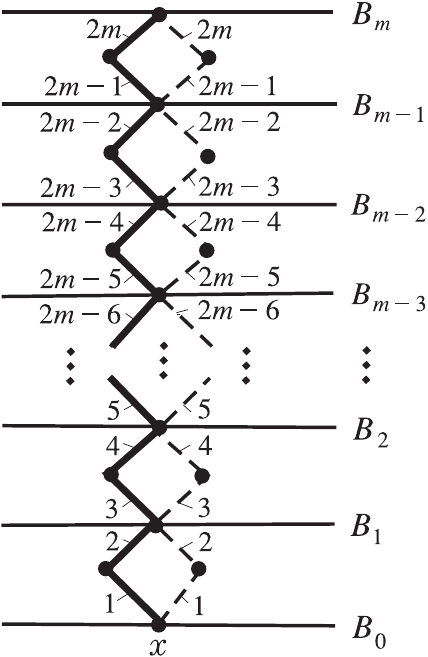}
\caption{The paths associated with a central vertex and the types of edges.}
 \label{Fig:4.4}
  \end{figure}

Suppose that there is a 1-immersion $\varphi$ of $S_m(\lambda)$. By Lemma \ref{lem:4.3}, $\overline{S}_m(\lambda)$ is a PN-graph.
Thus, $\varphi$ induces an embedding of this graph. We shall assume that
the outer face $F_0$ of this embedding is bounded by the cycle $B_m$.
We shall first show that $F_0$ is also a face of $\varphi$. To prove this,
it suffices to see that no central path can enter $F_0$.

Any central vertex $x$ is separated from $F_0$ by $3m-1$
edge-disjoint cycles: $m$ cycles $B_1,B_2,\ldots,B_m$ and $2m-1$ cycles $C_2,C_3,\ldots,C_{2m}$, where the cycle $C_i$ consists of all edges of type $i$
($i=2,3,\ldots,2m$). The central path $P=P(x,x^*)$ can have at most $6m-3$ crossing points, hence $P$ cannot enter $F_0$. If $P$ lies between $B_0$ and $B_m$ in $\varphi$, then it must cross $2(6m-2)$ paths associated either with $6m-2$ central vertices $x+1,x+2,\ldots,x^*-1$ or with $6m-2$ central vertices $x^*+1,x^*+2,\ldots,x-1$ (here we
{interpret all additions modulo $12m-2$), a contradiction. Hence, in $\varphi$ any central path either lies inside $B_0$ or crosses some edges of $\overline{S}_m(\lambda)$ but does not lie entirely between $B_0$ and $B_m$.

The main goal of this section is to show that $S_m(\lambda)$ has no
1-immersions (see Theorem~\ref{thm:4.1} in the sequel). Roughly speaking, the main idea of the proof is as
follows. Suppose, for a contradiction, that
$S_m(\lambda)$ has a 1-immersion. Every central path
can have at most $6m-3$ crossing points, hence, all $6m-1$
central paths can not be 1-immersed inside $B_0$.
Then there is a central path which crosses some edges of
$\overline{S}_m(\lambda)$. Let $P$ be a central path with maximum
number of such crossings. Since $P$ can have at most
$6m-3$ crossing points, some of the other $6m-2$
central paths do not cross $P$ and have to ``go
around" $P$ and, in doing so, one of the
paths has to cross more edges of
$\overline{S}_m(\lambda)$ than $P$ does, a contradiction.

Before proving Theorem~\ref{thm:4.1}, we need some
definitions and preliminary Lemmas~\ref{lem:4.4} and
\ref{lem:4.5}.

Consider a 1-immersion of $S_m(\lambda)$ (if it
exists). If a central path $P=P(x,x^*)$ does not lie
inside $B_0$, consider the sequence
$\delta_1,\delta_2,\ldots,\delta_r$ ($r\geq 2$),
where $\delta_1=x$ and $\delta_r=x^*$, obtained by
listing the intersection points of the path and $B_0$
when traversing the path from the vertex $x$ to the
vertex $x^*$ (here
$\delta_2,\delta_3,\ldots,\delta_{r-1}$ are crossing
points). By a \emph{piece} of $P$ we mean the segment
of $P$ from $\delta_i$ to
$\delta_{i+1}$ for some $i\in\{1,2,\ldots,r-1\}$;
denote the piece by $P(\delta_i,\delta_{i+1})$. A
piece of $P$ with an end point $x$ or $x^*$ is called
an \emph{end piece} of $P$ at the vertex $x$ or
$x^*$, respectively. An \emph{outer piece} of $P$ is
every piece of $P$ that is immersed outside $B_0$. Clearly,
either $P(\delta_1,\delta_2),P(\delta_3,\delta_4),
P(\delta_5,\delta_6),\ldots$ or
$P(\delta_2,\delta_3),P(\delta_4,\delta_5),
P(\delta_6,\delta_7),\ldots$ are all outer pieces of
$P$. The end points $\delta$ and $\delta'$ of an
outer piece $\Pi$ of $P$ partition $B_0$ into two
curves $A$ and $A'$ such that the curve $A$ lies inside
the closed curve consisting of $\Pi$  and $A'$ (see
Fig.~\ref{Fig:4.5}). The central vertices belonging
to $A$ and different from $\delta$ and $\delta'$ are
said to be \emph{bypassed} by $\Pi$ and $P$
(cf.\ Fig.~\ref{Fig:4.5}).

\begin{figure}
\centering
\includegraphics[width=0.2\textwidth]{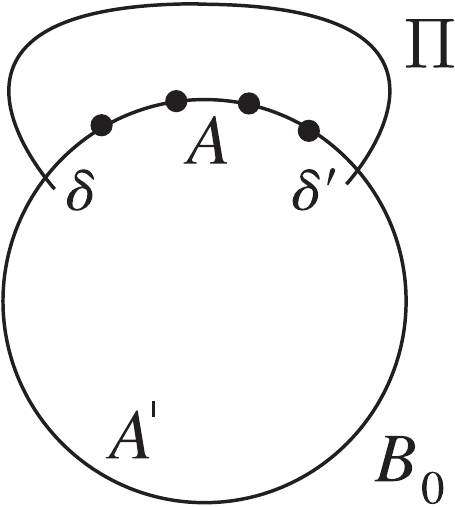}
\caption{The central vertices bypassed by an outer piece $\Pi$.}
 \label{Fig:4.5}
  \end{figure}

\begin{lem}\label{lem:4.4}
If\/ $P(x,x^*)$ bypasses neither a central vertex $y$
nor its opposite vertex $y^*$, and\/ $P(y,y^*)$ bypasses
neither $x$ nor $x^*$, then $P(x,x^*)$ crosses
$P(y,y^*)$.
\end{lem}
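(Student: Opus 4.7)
I argue by contradiction: assume the central paths $\Gamma:=P(x,x^*)$ and $\Gamma':=P(y,y^*)$ are disjoint in $\varphi$. Since $\{x,x^*\}$ and $\{y,y^*\}$ are two pairs of opposite vertices of the cycle $B_0$, the four central vertices $x,y,x^*,y^*$ alternate cyclically along $B_0$. The plan is to homotope each of $\Gamma$ and $\Gamma'$, rel its own endpoints and through the complement of the other path's endpoints, into the closed disk $\bar D$ bounded by $B_0$; then the classical fact that two arcs in a disk whose endpoints alternate on the boundary circle have an odd mod-$2$ intersection count will contradict disjointness.

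For the homotopy of $\Gamma$, pick an outer piece $\Pi$ of $\Gamma$ with endpoints $\delta,\delta'\in B_0$, bypassed arc $A$, and complementary arc $A'$ on $B_0$. The non-bypass hypothesis on $\Gamma$ gives $y,y^*\notin A$, i.e., $\{y,y^*\}\subseteq A'$. By the very definition of \emph{bypass}, the arcs $\Pi$ and $A$ together bound a topological disk lying in the outer component of $\RR^2\setminus B_0$; since this disk meets $B_0$ only along $A$, it avoids $y$ and $y^*$. Consequently, $\Pi$ is homotopic to $A$ rel $\{\delta,\delta'\}$ inside $\RR^2\setminus\{y,y^*\}$. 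Carrying out this move simultaneously on every outer piece of $\Gamma$ produces a path $\hat\Gamma\subset\bar D$ from $x$ to $x^*$ that avoids $\{y,y^*\}$, with $\Gamma\simeq\hat\Gamma$ rel endpoints in $\RR^2\setminus\{y,y^*\}$. The symmetric argument produces $\hat\Gamma'\subset\bar D$ from $y$ to $y^*$ that avoids $\{x,x^*\}$, with $\Gamma'\simeq\hat\Gamma'$ rel endpoints in $\RR^2\setminus\{x,x^*\}$.

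The mod-$2$ intersection count between two arcs in the plane is invariant under homotopies that fix endpoints and avoid the fixed endpoints of the other arc, by the standard parity-of-boundary argument applied to the $1$-manifold of intersection points of a generic homotopy. This yields $|\Gamma\cap\Gamma'|\equiv|\hat\Gamma\cap\hat\Gamma'|\pmod 2$. But $\hat\Gamma$ and $\hat\Gamma'$ are arcs in $\bar D$ whose endpoints alternate on $\partial\bar D=B_0$; inside $\bar D$ one may homotope them---still avoiding the other's endpoints, which is possible because $\bar D\setminus\{y,y^*\}$ and $\bar D\setminus\{x,x^*\}$ are contractible---to the straight chords $\overline{xx^*}$ and $\overline{yy^*}$, which meet transversely at a single interior point. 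Hence $|\hat\Gamma\cap\hat\Gamma'|\equiv 1\pmod 2$, forcing $|\Gamma\cap\Gamma'|\ge 1$ and contradicting disjointness.

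The main obstacle is making the homotopy $\Pi\simeq A$ rigorous when $\Pi$ is not a simple plane arc (the $1$-immersion permits nonadjacent edges of $\Gamma$ to cross, so $\Pi$ may self-intersect). I plan to handle this by resolving each self-crossing of $\Pi$ via a local swap inside the small disk containing the crossing: the swap makes $\Pi$ simple without changing the bypassed arc $A$ or the mod-$2$ winding of $\Pi\cdot A^{-1}$ around $y$ or $y^*$, reducing the situation to the Jordan curve case where $\Pi\cup A$ genuinely bounds a topological disk in the outer region of $B_0$.
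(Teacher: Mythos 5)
Your proof is correct and follows essentially the same strategy as the paper's: both arguments push each outer piece of a central path back onto its bypassed arc of $B_0$ (legitimate precisely because the non-bypass hypothesis guarantees $y,y^*$, respectively $x,x^*$, do not lie on that arc), thereby reducing to two arcs in the disk bounded by $B_0$ whose endpoints alternate on the boundary. The only difference is in the finish: the paper realizes the pushed-in paths as explicitly disjoint curves inside $B_0$ (Figure~\ref{Fig:4.6}) and derives the contradiction from planarity, whereas you track the mod-2 intersection number through the homotopies, which neatly sidesteps both the disjoint-realization step and the possible non-simplicity of the outer pieces.
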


\begin{proof}
Suppose, for a contradiction, that
$P(x,x^*)$ does not cross $P(y,y^*)$. For every outer
piece of the two paths we can replace a curve of a
path containing the piece by a new curve lying inside
$B_0$ so that the path $P(x,x^*)$ (resp. $P(y,y^*)$)
becomes a new path $P'(x,x^*)$ (resp. $P'(y,y^*)$)
connecting the vertices $x$ and $x^*$ (resp. $y$ and
$y^*$) such that the two new paths lie inside $B_0$
and do not cross each other, a contradiction. How the
replacements can be done is shown in
Fig.~\ref{Fig:4.6}, where the new curves are depicted
in thick line. (Note that in Fig.~\ref{Fig:4.6}(b),
since $P(y,y^*)$ does not bypass $x$, the depicted
pieces bypassing $x$ belong to $P(x,x^*)$.)
\end{proof}

\begin{figure}
\centering
\includegraphics[width=0.35\textwidth]{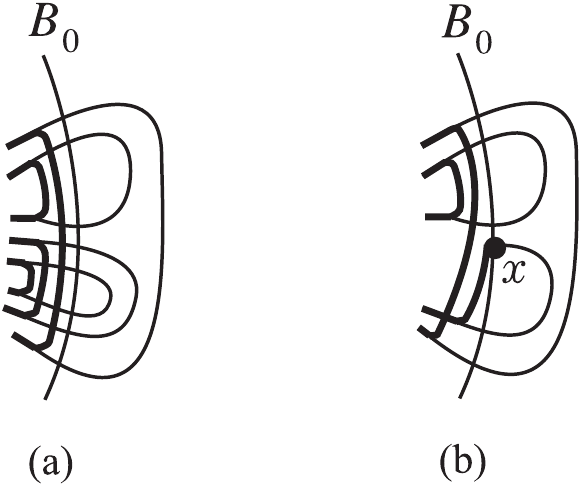}
\caption{Transforming paths $P(x,x^*)$ into paths $P'(x,x^*)$.}
 \label{Fig:4.6}
  \end{figure}


By a \emph{type} of an outer piece of a central path we mean the maximal
type of an edge of $\overline{S}_m(\lambda)$ crossed by the path.

For an outer piece $\Pi$ of a central path $P(x,x^*)$,
denote by $b(\Pi)$ the number of central vertices
bypassed by $\Pi$, and by $\Delta(\Pi)$ the number of
intersection points of
$\Pi$ and $\overline{S}_m(\lambda)$,
including the crossings at the end points of $\Pi$
(except if an end point is $x$ or $x^*$).

\begin{lem}\label{lem:4.5}
If\/ $\Pi$ is an outer piece of type $t$ of a central path $P$, then
\[
\Delta(\Pi)-b(\Pi)\geq 2t-\tau,
\]
where $\tau=1$ if\/ $\Pi$ is an end piece and $\tau=0$ otherwise.
\end{lem}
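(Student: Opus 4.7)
The plan is to lower-bound $\Delta(\Pi)$ via two kinds of forced crossings: a ``radial'' count that tracks level changes as $\Pi$ traverses the annular layers of $\overline{S}_m(\lambda)$, and a ``bypass'' count contributed by the two paths associated with each bypassed central vertex.

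First, I set up the geometry. Let $A$ denote the arc of $B_0$ containing the $b(\Pi)$ bypassed central vertices, $A'$ the complementary arc, and $R$ the closed bounded region of the simple closed curve $\Pi\cup A'$, so that $A$ and all bypassed central vertices lie in the interior of $R$. Since $t<2m$, $\Pi$ does not cross $B_m$, hence $B_m$ lies entirely in $R^{\mathrm{ext}}$.

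For the radial count, I would assign each point in the exterior of $B_0$ a \emph{level} equal to the number of type edges separating it from $B_0$ along a radial path; then $B_0$ has level $0$, $B_j$ has level $2j$, and every type-$i$ edge straddles levels $i-1$ and $i$. Since $\Pi$ begins and ends at level $0$ and crosses a type-$t$ edge (thus reaching level $t$), and since every type-edge crossing changes the level by $\pm 1$, $\Pi$ must have at least $2t$ type-edge crossings. Together with the $2-\tau$ endpoint crossings on $B_0$ and the $2\lfloor(t-1)/2\rfloor$ mandatory crossings with the cycles $B_j$ satisfying $2j<t$, this yields $\Delta(\Pi)\ge 3t-\tau$ for even $t$ and $\Delta(\Pi)\ge 3t+1-\tau$ for odd $t$, already implying the desired inequality whenever $b(\Pi)\le t$ (for even $t$) or $b(\Pi)\le t+1$ (for odd $t$).

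For larger $b(\Pi)$, I would use the two paths of $\overline{S}_m(\lambda)$ associated with each bypassed central vertex $y$ (Fig.~\ref{Fig:4.4}). Each such path joins $y\in R$ to a vertex of $B_m\subset R^{\mathrm{ext}}$ and hence must cross $\partial R=\Pi\cup A'$ at least once. Since $\overline{S}_m(\lambda)$ is a PN-graph (Lemma~\ref{lem:4.3}) and so has a unique embedding inside $\varphi$, and since each edge of $A'\subset B_0$ can be crossed at most once in the $1$-immersion, a careful analysis of how the $2b(\Pi)$ associated-path exits split between $A'$ and $\Pi$ should yield one additional $\Pi$-crossing per bypassed vertex beyond the radial threshold, completing the bound.

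The hardest part will be avoiding double-counting between the two steps: a single crossing of $\Pi$ may be both a radial type-edge crossing from the first step and an exit-crossing of an associated path from the second step. I would handle this by partitioning the crossings by their location --- those on separating cycles used in the radial step versus those on edges incident to bypassed central vertices used in the bypass step --- exploiting the fact that every central vertex of $\overline{S}_m(\lambda)$ has degree at least $10$ to ensure each bypassed vertex has enough disjoint incident edges to supply a distinct bypass crossing.
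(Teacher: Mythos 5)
Your two counting mechanisms are exactly the ones the paper uses: crossings forced by the nested separating cycles ($B_0,\dots,B_{\lfloor (t+1)/2\rfloor-1}$ together with the type-cycles $C_1,\dots,C_t$), and crossings with the two paths associated with each bypassed central vertex. Your radial count reproduces the paper's first inequality $\Delta(\Pi)\ge 2\lfloor\tfrac{t+1}{2}\rfloor+2t-\tau$, and your case threshold $b(\Pi)\approx t$ is the right one. However, the way you propose to finish the case of large $b(\Pi)$ has a genuine gap. You want ``one additional $\Pi$-crossing per bypassed vertex beyond the radial threshold'' and propose to certify disjointness by partitioning crossings into those on separating cycles versus those on edges incident to bypassed central vertices. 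That partition does not exist: the two paths associated with a central vertex consist entirely of type edges (one edge of each type), and the union of all type-$i$ edges is precisely the separating cycle $C_i$ used in your radial step. So every associated-path crossing is already a crossing of some $C_i$, and the two counts overlap in a way your location-based split cannot control. The auxiliary claim that every central vertex has degree at least $10$ is also false --- $\overline{S}_m(\lambda)$ satisfies (C1), so its vertices have degree at most $6$ (at most $7$ in $S_m(\lambda)$, counting the central-path edge) --- so there is no surplus of incident edges to draw on.

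The repair is to give up on making the two counts additive and instead keep them as two separate lower bounds that share only the $B$-cycle term: $\Delta(\Pi)\ge 2\lfloor\tfrac{t+1}{2}\rfloor+2t-\tau$ and $\Delta(\Pi)\ge 2\lfloor\tfrac{t+1}{2}\rfloor+2b(\Pi)-\tau$. The second holds because each of the $2b(\Pi)$ associated paths runs from a bypassed vertex inside your region $R$ out to $B_m$, cannot cross $A'$ (both lie in the embedded PN-graph $\overline{S}_m(\lambda)$), and hence must cross $\Pi$; these paths are pairwise edge-disjoint, and their edges are disjoint from the $B$-cycles. Averaging the two inequalities and using $2\lfloor\tfrac{t+1}{2}\rfloor\ge t$ gives $\Delta(\Pi)\ge 2t+b(\Pi)-\tau$, which is the lemma; this is exactly what the paper does. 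Equivalently, within your own case split, when $b(\Pi)\ge t$ the second inequality alone already yields $\Delta(\Pi)\ge t+2b(\Pi)-\tau\ge 2t+b(\Pi)-\tau$, so no combination with the radial count is needed at all.
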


\begin{proof}
The piece $\Pi$ crossing an edge of
type $t$ has a point separated from the interior of
$B_0$ by $\lfloor\frac{t+1}{2}\rfloor+t$
edge-disjoint cycles:
$B_0,B_1,\ldots,B_{\lfloor\frac{t+1}{2}\rfloor-1}$,
and the cycles $C_1,C_2,\ldots,C_t$, where the cycle
$C_i$ consists of all edges of type $i$ ($i=1,2,\ldots,t$).
Thus, $\Pi$ crosses each of these cycles twice,
except that for an end piece, we may miss one crossing with $C_1$.
The piece $\Pi$ bypasses $b(\Pi)$ central
vertices, hence $\Pi$ crosses $2b(\Pi)$ paths
associated with those $b(\Pi)$ vertices. Hence we obtain two inequalities

\begin{equation}\label{eq:1}
  \Delta(\Pi)\geq 2\left\lfloor\tfrac{t+1}{2}\right\rfloor+2t-\tau,
\end{equation}
and
\begin{equation}\label{eq:2}
  \Delta(\Pi)\geq
  2\left\lfloor\tfrac{t+1}{2}\right\rfloor+2b(\Pi)-\tau.
\end{equation}

\noindent Now add (\ref{eq:1}) and (\ref{eq:2}), apply $2\lfloor\tfrac{t+1}{2}\rfloor\geq t$, then divide by $2$ and rearrange to get the result.
\end{proof}

By the \emph{type} of a central path not lying (entirely) inside
$B_0$ we mean the maximal type of the outer pieces of
the path. If $t$ different central paths bypass a
central vertex $x$, then all the paths cross edges of
the same path $T$ associated with $x$ and since the
edges of $T$ have pairwise different types, we obtain
that one of the central paths crosses an edge of type
at least $t$. Hence we have:

\medskip

(D) If $t$ different central paths bypass
    the same central vertex, then one of the paths
    has type at least $t$.

\medskip

\begin{thm}\label{thm:4.1}
For every $m\geq4$ and $\lambda\in \Phi_m(n)$,
the graph $S_m(\lambda)$ is not\/ $1$-planar.
\end{thm}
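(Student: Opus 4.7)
\noindent
\textit{Proof proposal.}
The strategy is by contradiction. Assume $\varphi$ is a 1-immersion of $S_m(\lambda)$. By Lemma~\ref{lem:4.3}, $\overline{S}_m(\lambda)$ is a PN-graph, so $\varphi$ restricts to a plane embedding of it; using observation~(A) we may arrange the embedding so that $B_m$ bounds the outer face. As noted in the paragraph preceding the theorem, each central path $P(x,x^*)$ either lies entirely inside $B_0$ or has outer pieces that cross edges of $\overline{S}_m(\lambda)$.

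First I rule out the case where all $6m-1$ central paths lie inside $B_0$: joining antipodal pairs of vertices of $B_0$, they act as pairwise crossing chords of the disk bounded by $B_0$, so each path would need at least $6m-2$ crossings while its $6m-3$ edges allow at most $6m-3$. Thus at least one central path has outer pieces, and among those I choose $P=P(x,x^*)$ maximizing $\Delta(P)$, the total number of crossings of $P$ with edges of $\overline{S}_m(\lambda)$. Let $t$ be the type of $P$ and $b(P)=\sum_i b(\Pi_i)$ the total number of central vertices bypassed by the outer pieces of $P$. Summing Lemma~\ref{lem:4.5} over the outer pieces of $P$ (observing that at most two of them are end pieces) yields $\Delta(P)\geq 2t+b(P)-2$. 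Since $P$ has at most $6m-3$ crossing points, at most $6m-3-\Delta(P)$ of the other $6m-2$ central paths cross $P$, so at least $\Delta(P)+1$ of them do not. By Lemma~\ref{lem:4.4}, each non-crossing path $Q=P(y,y^*)$ either has an endpoint among the $b(P)$ vertices bypassed by $P$ (which accommodates at most $b(P)$ such $Q$) or bypasses $x$ or $x^*$. Hence at least $\Delta(P)+1-b(P)\geq 2t-1$ paths bypass $x$ or $x^*$; by pigeonhole at least $t$ of them bypass a common endpoint, say $x$, and property~(D) then produces such a path $Q$ with type $t_Q\geq t$.

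The final and crucial step is to upgrade this to $\Delta(Q)>\Delta(P)$, contradicting the maximality of $\Delta(P)$. The argument is topological: since $Q$ does not cross $P$, the outer piece $\Pi_Q$ of $Q$ bypassing $x$ must nest around every outer piece of $P$ that emerges from the arc of $B_0$ enclosed by $\Pi_Q$. When an outer piece of $P$ of type $t$ is nested in this way, $\Pi_Q$ is forced one concentric level deeper into the structure $B_0,B_1,\ldots,B_m$, so $t_Q>t$, and then Lemma~\ref{lem:4.5} applied to $\Pi_Q$ gives $\Delta(Q)>\Delta(P)$. The principal obstacle will be handling the sub-case where no type-$t$ outer piece of $P$ emerges from the arc enclosed by $\Pi_Q$ (for instance, when $P$'s unique type-$t$ piece is internal, or is an end-piece at $x^*$ while pigeonhole forces the bypass count at $x$). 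Overcoming this will likely require refining the choice of $P$ (e.g.\ lexicographically maximizing $(\Delta(P),t,b(P))$), using the symmetric pigeonhole conclusion at $x^*$, and/or iterating the construction to produce a chain of central paths with strictly increasing $\Delta$-values until the $6m-3$ crossing budget is exhausted.
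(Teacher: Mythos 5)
There is a genuine gap, and you have in fact flagged it yourself: the final step, upgrading the derived path $Q$ of type $t_Q\geq t$ to one with $\Delta(Q)>\Delta(P)$, is only sketched, and the sub-cases you list (no type-$t$ piece of $P$ emerging from the arc enclosed by $\Pi_Q$, the type-$t$ piece being an end piece at the wrong vertex, etc.) are exactly where the argument would have to do real work. The root of the difficulty is your choice of extremal object: you maximize $\Delta(P)$, so the contradiction must come from producing a path with strictly more crossings, and a path of merely equal or larger \emph{type} does not give you that. The paper instead takes $P$ of \emph{maximal type} $t$, so that any path of type $\geq t+1$ is an immediate contradiction via property (D) --- no comparison of crossing counts is ever needed.

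There is a second, quantitative problem that would bite you even after switching to maximal type. Summing Lemma~\ref{lem:4.5} over all outer pieces with the crude bound ``at most two end pieces'' gives only $\Delta(P)-b(P)\geq 2t-2$, hence only $2t-1$ paths bypassing $x$ or $x^*$, and the pigeonhole then yields just $t$ paths at a common endpoint --- enough for a path of type $\geq t$, not $\geq t+1$. The paper avoids this loss by isolating the single piece $\Pi_1$ of maximal type (using $\Delta(\Pi_i)-b(\Pi_i)\geq 0$ for all other pieces), which gives $\gamma+\varepsilon\geq 2t+1-\tau+\varepsilon$ with $\tau\in\{0,1\}$ charged to $\Pi_1$ alone. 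When $\Pi_1$ is not an end piece, or when some bypassing path also crosses $P$ ($\varepsilon\geq 1$), this yields $\geq 2t+1$ bypassing paths, hence $t+1$ at one endpoint and a type-$(t+1)$ path by (D). The one residual case ($\Pi_1$ an end piece at $x$, $\varepsilon=0$) is closed by a short geometric observation: a path bypassing $x$ without crossing $P$ must have an outer piece that goes around the type-$t$ end piece $\Pi_1$ at $x$, and therefore itself has type $\geq t+1$. This is the concrete replacement for the nesting argument you gesture at, and it works precisely because only \emph{type}, not crossing count, needs to increase. Your framework (the use of Lemmas~\ref{lem:4.4} and \ref{lem:4.5} and property (D)) is the right one, but as written the proof does not close.
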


\begin{proof}
Consider, for a contradiction, a
1-immersion $\varphi$ of $S_m(\lambda)$ and a path
$P=P(x,x^*)$ of maximal type $t>0$.

As above, let $\Delta(P)$ be the number of crossing points
of $P$ and $\overline{S}_m(\lambda)$, and let $b(P)$ be the number of distinct central vertices bypassed by $P$ and different from $x$ and $x^*$.

There are at least $6m-2-b(P)$ different pairs $\{y,y^*\}$ ($\{y,y^*\}\neq\{x,x^*\}$) of central vertices such
that $P$ does not bypass $y$ and $y^*$; denote by
$\mathcal{P}$ the set of the corresponding (at least  $6m-2-b(P)$) paths
$P(y,y^*)$. If $P(x,x^*)$ does not bypass $y$ and
$y^*$, then $P(y,y^*)$ either does not bypass $x$ and
$x^*$ (in this case $P(y,y^*)$ crosses $P(x,x^*)$
by Lemma~\ref{lem:4.4}) or bypasses at least one
of the vertices $x$ and $x^*$. Hence, we have

\begin{equation}\label{eq:3}
6m-2-b(P)\leq\beta+\gamma+\varepsilon,
\end{equation}
where: $\beta$ is the number of
paths of $\mathcal{P}$ that cross $P$ and do not
bypass $x$ or $x^*$; $\gamma$ is the number of
paths of $\mathcal{P}$ that bypass $x$ or $x^*$
and do not cross $P$; $\varepsilon$ is the number of
paths of $\mathcal{P}$ that cross $P$ and bypass $x$
or $x^*$. We are interested in the number
$\gamma+\varepsilon$ of paths of $\mathcal{P}$ that
bypass $x$ or $x^*$.

The path $P$ has at most $6m-3$ crossing points,
hence
\[
6m-3-\Delta(P)\geq\beta+\varepsilon
\]
and, by (\ref{eq:3}), we obtain
\[
\gamma\geq 6m-2-b(P)-(\beta+\varepsilon)\geq
\Delta(P)-b(P)+1,
\]
whence
\begin{equation}\label{eq:4}
\gamma+\varepsilon\geq\Delta(P)-b(P)+1+\varepsilon.
\end{equation}

Let $\Pi_1,\Pi_2,\ldots,\Pi_\ell$ ($\ell\geq 1$) be
all outer pieces of $P$, and let $\Pi_1$ be of the
maximal type $t$. We have
$\Delta(P)=\sum^\ell_{i=1}\Delta(\Pi_i)$ and
$b(P)\leq\sum^\ell_{i=1}b(\Pi_i)$ (the vertices $x$
and $x^*$ can be bypassed by $P$, and some central vertices can be bypassed by $P$ more than once). By Lemma~\ref{lem:4.5},
$\Delta(\Pi_i)-b(\Pi_i)\geq 0$ for every
$i=1,2,\ldots,\ell$. Hence, by
(\ref{eq:4}) and Lemma~\ref{lem:4.5}, we obtain
\begin{eqnarray}
 \label{eq:5}
 \gamma+\varepsilon &\geq& \sum^\ell_{i=1}\Delta(\Pi_i)
   -\sum^\ell_{i=1}b(\Pi_i)+1+\varepsilon \nonumber \\
 &\geq&
 \Delta(\Pi_1)-b(\Pi_1)+1+\varepsilon \geq (2t+1)
   -\tau+\varepsilon,
\end{eqnarray}
where $\tau=1$ if $\Pi_1$ is an end piece and
$\tau=0$ otherwise. If $\Pi_1$ is not an end
piece or $\varepsilon\geq 1$, then, by
(\ref{eq:5}), $\gamma+\varepsilon\geq 2t+1$, hence
one of the vertices $x$ and $x^*$ is bypassed by at
least $t+1$ paths of $\mathcal{P}$. Now, by (D),
one of the $t+1$ paths has type at least $t+1$, a contradiction. Now suppose that $\Pi_1$ is an end piece at the vertex
$x$ and $\varepsilon=0$. Then every path of
$\mathcal{P}$ either crosses $P$ or bypasses $x$ or
$x^*$, and at least $2t$ paths of $\mathcal{P}$
bypass $x$ or $x^*$. If no one of the $2t$ paths
bypasses $x$, then all the $2t\geq t+1$ paths bypass
$x^*$ and, by (D), one of the paths has type at
least $t+1$. If one of the $2t$ paths, say, $P'$,
bypasses $x$, then $P'$ has an outer piece $\Pi'$
that bypasses $x$ and does not cross $\Pi_1$ (since
$\varepsilon=0$, $P'$ does not cross $P$). The
piece $\Pi_1$ has type $t$ and is an end piece at
$x$, hence $\Pi'$ has type at least $t+1$, a
contradiction.
\end{proof}

We have shown that every graph $S_m(\lambda)$, where
$m\geq 4$ and $\lambda\in \Phi_m(n)$, is an MN-graph.
These graphs have order $(5m-1)(12m-2)+5n$. Clearly,
graphs $S_{m_1}(\lambda_1)$ and $S_{m_2}(\lambda_2)$,
where $\lambda_1\in\Phi_{m_1}(n_1)$ and
$\lambda_2\in\Phi_{m_2}(n_2)$, are nonisomorphic for
$m_1\neq m_2$ and for $m_1=m_2$ and $n_1\neq n_2$.

\begin{clm}\label{clm:2}
For any integers\/ $m\geq 4$ and\/ $n\geq 0$, there are
at least $\frac{1}{(24m-4)}{n+12m-3 \choose 12m-3}$
nonisomorphic MN-graphs $S_m(\lambda)$, where
$\lambda\in \Phi_m(n)$.
\end{clm}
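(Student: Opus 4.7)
The plan is to deploy a standard orbit-counting argument that compares the size of $\Phi_m(n)$ with the order of the automorphism group of the skeleton $S_m$. First, by stars-and-bars the number of $(12m-2)$-tuples of nonnegative integers summing to $n$ is
\[
|\Phi_m(n)| = \binom{n+12m-3}{12m-3},
\]
so the stated lower bound is exactly $|\Phi_m(n)|/(24m-4)$. Hence it suffices to show that for every $\lambda \in \Phi_m(n)$, at most $24m-4$ tuples $\lambda' \in \Phi_m(n)$ yield a graph isomorphic to $S_m(\lambda)$.

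Second, I would identify this bound $24m-4$ with the order of an explicit symmetry group. The graph $S_m$ in Figure~\ref{Fig:4.1} is manifestly invariant under a dihedral group $D_{12m-2}$ of order $2(12m-2)=24m-4$, generated by the rotation $x \mapsto x+1 \pmod{12m-2}$ on the central vertices together with a reflection; these symmetries permute the cycles $B_i$ and the central paths in a compatible planar manner. Each $\sigma \in D_{12m-2}$ acts on $\Phi_m(n)$ by $\sigma(\lambda)_x = \lambda_{\sigma^{-1}(x)}$, and since the substitution of Figure~\ref{Fig:4.2} is performed locally at each central vertex, $\sigma$ extends to a graph isomorphism $S_m(\lambda) \to S_m(\sigma(\lambda))$. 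Hence every orbit of $D_{12m-2}$ on $\Phi_m(n)$ consists of tuples yielding mutually isomorphic graphs.

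The principal step (and the main obstacle) is the converse rigidity: if $S_m(\lambda) \cong S_m(\lambda')$, then the isomorphism is induced by some $\sigma \in D_{12m-2}$, so that $\lambda' = \sigma(\lambda)$. The idea is that both the skeleton $S_m$ and each multiplicity $n_x$ can be recovered intrinsically from $S_m(\lambda)$: the central vertices are distinguished as those $12m-2$ vertices lying on the innermost cycle $B_0$ from which the central paths emanate, the cycles $B_1, \ldots, B_m$ are detected by their layered position in the unique planar embedding of $\overline{S}_m(\lambda)$ guaranteed by Lemma~\ref{lem:4.3}, and $n_x$ is read off from the local edge/degree pattern at $x$ produced by Figure~\ref{Fig:4.2}(b). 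Any graph isomorphism must therefore carry $B_0$ to $B_0$ and preserve its cyclic order up to reversal, placing the induced permutation of central-vertex labels in $D_{12m-2}$.

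Granted this rigidity, the isomorphism classes among $\{S_m(\lambda) : \lambda \in \Phi_m(n)\}$ are in bijection with the $D_{12m-2}$-orbits on $\Phi_m(n)$. Since every orbit has size at most $|D_{12m-2}| = 24m-4$, the number of orbits, and therefore the number of nonisomorphic MN-graphs of the form $S_m(\lambda)$, is at least
\[
\frac{|\Phi_m(n)|}{24m-4} \;=\; \frac{1}{24m-4}\binom{n+12m-3}{12m-3},
\]
as claimed. The one genuinely fiddly ingredient is the recoverability of the skeleton $S_m$ inside $S_m(\lambda)$, but once the canonical role of $B_0$ and the degree-encoding of each $n_x$ are spelled out, the remainder reduces to bookkeeping with the dihedral action.
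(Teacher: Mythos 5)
Your proposal is correct and follows essentially the same route as the paper, which simply notes that $\mathrm{Aut}(S_m)$ is the dihedral group $D_{12m-2}$ of order $24m-4$ and that $|\Phi_m(n)|=\binom{n+12m-3}{12m-3}$ by stars-and-bars. You spell out the orbit-counting and the rigidity step (that any isomorphism $S_m(\lambda)\cong S_m(\lambda')$ must be induced by an automorphism of the skeleton) in more detail than the paper, which leaves that point implicit.
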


\begin{proof}
The automorphism group of $S_m$ is the dihedral group $D_{12m-2}$
of order $24m-4$. Now the claim follows by recalling a well-known
fact that $|\Phi_m(n)|={n+12m-3 \choose 12m-3}$.
\end{proof}

\section{Testing 1-immersibility is hard}
\label{sect:NPC}

In this section we prove that testing 1-immersibility is NP-hard.
This shows that it is extremely unlikely that there exists a nice
classification of MN-graphs.

\begin{thm}
\label{thm:NPC1}
It is NP-complete to decide if a given input graph is\/ $1$-immersible.
\end{thm}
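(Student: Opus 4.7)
The plan is to establish membership in NP and then prove NP-hardness by a polynomial reduction from planar 3-satisfiability (or 3-colorability of planar graphs), as hinted by the authors. Membership in NP is routine: a 1-immersion can be certified by its combinatorial description, namely a rotation system of the graph obtained by treating crossings as subdivision points together with the pairing that records which subdivision vertices are crossings; a polynomial-time verifier checks planarity of this subdivided graph, that each original edge meets at most one crossing, and that no two crossings lie on adjacent edges.

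For NP-hardness, I would start from an instance $\varphi$ of planar 3-SAT, drawn in the plane with variable-vertices, clause-vertices, and edges joining each clause to its literals in a planar layout. The construction builds a graph $G_\varphi$ whose 1-immersibility encodes satisfiability. The backbone is made of PN-graphs (as constructed in Section~\ref{Sec:3}): since every PN-subgraph has a unique 1-immersion which is its plane embedding, inserting PN-graphs as ``frames'' rigidly constrains the placement of the remaining vertices and edges. This rigidity is what converts a topological question (``does a 1-immersion exist?'') into a combinatorial choice problem at each gadget.

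The core of the reduction consists of three kinds of gadgets. A \emph{variable gadget} is a planar subgraph enclosed by a PN-frame with a single ``free'' edge, say $e_x$, that must be drawn either inside or outside a designated face of the frame; by the rigidity of the PN-frame these two options are the only 1-immersions of the gadget, and we identify them with the truth values of $x$. A \emph{wire gadget} is a chain of linked PN-frames whose unique 1-immersion forces a cascading choice, so the truth value propagates along the wire and can be split at a ``fanout'' sub-gadget. A \emph{clause gadget} for a 3-clause $(\ell_1\lor\ell_2\lor\ell_3)$ will be a local obstruction (modeled on the non-1-planarity arguments of Sections~\ref{Sec:2} and \ref{Sec:4}, where forbidding all routings inside a PN-frame forces a crossing that is impossible if all three incoming wires carry the ``false'' value): in other words, the clause gadget admits a 1-immersion exactly when at least one of its three wires enters in the ``true'' configuration. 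Using the planar layout of $\varphi$, all gadgets and wires can be composed without unintended crossings, yielding a graph $G_\varphi$ of size polynomial in $|\varphi|$ such that $G_\varphi$ is 1-planar iff $\varphi$ is satisfiable.

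The main obstacle, and where most of the work will go, is the design and verification of the clause gadget, because we need a local non-1-planar configuration that becomes 1-planar under at least one of three independent external choices. This is precisely the kind of non-1-planarity argument that occupies Sections~\ref{Sec:2}--\ref{Sec:4}, and I would build the clause gadget by inserting, between three converging wire-ends, a slightly relaxed analogue of a chain graph or of $S_m(\lambda)$ so that any single ``true'' wire supplies the missing crossing room while three ``false'' wires reproduce the full obstruction. A secondary technical point is establishing that the PN-frames really do glue together rigidly: one must verify that the disjoint unions and edge-identifications used to connect gadgets preserve conditions (C1)--(C9), so that Theorem~\ref{thm:3.1} still applies to the composite frame. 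Once these two ingredients are in place, correctness of the reduction follows by mapping satisfying assignments to 1-immersions and vice versa, completing the NP-hardness proof.
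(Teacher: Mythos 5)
Your overall strategy---certify membership in NP combinatorially, then reduce from a planar NP-complete problem by using rigid subgraphs with unique 1-immersions as frames that turn the topological question into a discrete choice at each gadget---is indeed the strategy of the paper. But what you have written is a plan, not a proof, and the plan leaves unconstructed precisely the objects on which everything depends. The clause gadget is the crux: you need a subgraph that is non-1-planar when all three incident wires are ``false'' but becomes 1-planar as soon as any one of them is ``true,'' and you defer its construction with the remark that a ``slightly relaxed analogue of a chain graph or of $S_m(\lambda)$'' should work. That is not substantiated, and the analogy is doubtful: chain graphs and the graphs $S_m(\lambda)$ are MN-graphs, i.e.\ deleting \emph{any single edge} restores 1-planarity. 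That is a very different (and in a sense much weaker) property than ``1-planar if and only if at least one of three prescribed external routings is available,'' and nothing in Sections~\ref{Sec:2}--\ref{Sec:4} produces a gadget with the latter behaviour. Likewise the variable and wire gadgets, and the fanout, are only named, not built. The same goes for your ``secondary technical point'': the fact that PN-frames (or any rigid pieces) glue together rigidly is not secondary at all. A PN-subgraph is forced to be embedded, but the relative placement of several such frames and the routing of the edges joining them is not automatically determined; the paper spends Theorem~\ref{thm:1} and a delicate counting argument about basic paths and grids establishing exactly this for its U-supergraphs.

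For comparison, the paper sidesteps the need for a clause gadget entirely by reducing from 3-colorability of planar graphs of maximum degree four rather than from planar 3-SAT. Each vertex $v$ becomes a vertex-block $B(v)$ with three families of pending paths ($a$-, $b$-, $c$-families), engineered so that in any 1-immersion exactly one family is ``activated''; the activated family is the colour of $v$. Adjacency constraints are enforced by three pending $(h)$-paths between adjacent blocks: activating the $h$-family of $B(v)$ crosses all $(h)$-paths at $B(v)$, so the neighbouring block cannot also activate its $h$-family. The only constraint that must be encoded is thus inequality of adjacent colours, which is far easier to realize locally than a three-way disjunction. The rigid frames are not the PN-graphs of Section~\ref{Sec:3} but the U-graphs of \cite{K}, joined by $(1)$- and $(2)$-grids into U-supergraphs whose unique 1-immersion is proved separately. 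Your route is not impossible in principle, but as it stands the proof has a genuine gap: every gadget whose existence the reduction requires is asserted rather than exhibited and verified.
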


Since 1-immersions can be represented combinatorially, it is clear that 1-immersability is in NP. To prove its completeness, we shall make a reduction from a known NP-complete problem, that of 3-colorability of planar graphs of maximum degree at most four \cite{GJS}.

The rest of this section is devoted to the proof of Theorem \ref{thm:NPC1}.

Let $G$ be a given plane graph of maximum degree 4
whose 3-colorability is to be tested. We shall show how to construct, in polynomial time, a related graph $\overline{G}$ such that $\overline{G}$ is 1-immersible if and only if $G$ is 3-colorable. We may assume that $G$ has no vertices of degree less than three (since degree 1 and 2 vertices may be deleted without affecting 3-colorability).

To construct $\overline{G}$, we will use as building blocks graphs which have a unique 1-immersion. These building blocks are connected with each other by edges to form a graph which also has a unique 1-immersion. Then we add some additional paths to obtain $\overline{G}$.

We say that a 1-planar graph $G$ has a \emph{unique\/ $1$-immersion\/} if, whenever two edges $e$ and $f$ cross each other in some 1-immersion, then they cross each other in every 1-immersion of $G$, and secondly, if $G^\bullet$ is the planar graph obtained from $G$ by replacing each pair of crossing edges $e=ab$ and $f=cd$ by a new vertex of degree four joined to $a,b,c,d$, then $G^\bullet$ is 3-connected (and thus has combinatorially unique embedding in the plane -- the one obtained from 1-immersions of $G$).

It was proved in \cite{K} that for every $n\geq 6$, the graph with $4n$ vertices and $13n$ edges shown in Fig.~\ref{Fig.5.1}(a) has a unique 1-immersion. (To be precise, the paper \cite{K} considers the graph for even values of $n\geq 6$ only, but one can check that the proof does not depend on whether $n\geq 6$ is even or odd.) We call the graph a \emph{U-graph}. Fig.~\ref{Fig.5.1}(b) shows a designation of the U-graph used in what follows. In the 1-immersion of the U-graph shown in Fig.~\ref{Fig.5.1}, the vertices $1,2,3,\ldots,n-1,n$ which lie on the boundary of the outer face of the spanning embedding (the boundary is called the \emph{outer boundary cycle} of the 1-immersed U-graph) are called the \emph{boundary vertices} of the U-graph in the 1-immersion. If a graph has a U-graph as a subgraph, then the U-graph is called the \emph{U-subgraph} of the graph.

\begin{figure}
\centering
\includegraphics[width=0.90\textwidth]{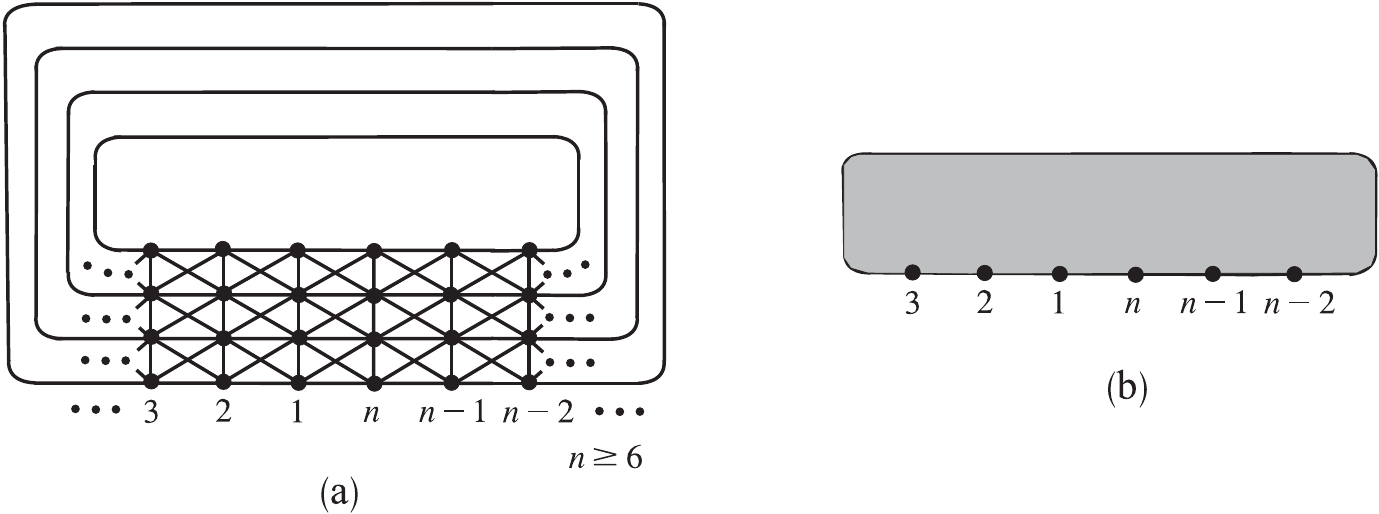}
\caption{The U-graph.}
 \label{Fig.5.1}
  \end{figure}

Take two 1-immersed U-graphs $U_1$ and $U_2$ such that each of them has the outer boundary cycle of length at least 7, and construct the 1-immersed graphs shown in Figs.~\ref{Fig.5.2}(a) and (b), respectively, where by $1,2,\ldots,7$ we denote seven consecutive vertices on the outer boundary cycle of each of the 1-immersed graphs. We say that in Fig.~\ref{Fig.5.2}(a) (resp. (b)) the U-graphs $U_1$ and $U_2$ are connected by a $(1)$-grid (resp. $(2)$-grid). The vertices labeled $1,2,\ldots,7$ are the \emph{basic vertices} of the grid and for $i=1,2,\ldots,7$, the $h$-path connecting the vertices labeled $i$ of the $(h)$-grid, $h\in\{1,2\}$, is called the \emph{basic path} of the grid connecting these vertices. Let us denote the $i$th basic path by $P_i$. The paths $P_{i-1}$ and $P_i$, $i=2,3,\ldots,7$, are \emph{neighboring basic paths} of the grid. For two basic paths $P=P_i$ and $P'=P_j$, $1\leq i<j\leq 7$, denote by $C(P,P')$ the cycle of the graph in Fig.~\ref{Fig.5.2} consisting of the two paths and of the edges $(i,i+1),(i+1,i+2),\ldots,(j-1,j)$ of the two graphs $U_1$ and $U_2$.

\begin{figure}
\centering
\includegraphics[width=0.70\textwidth]{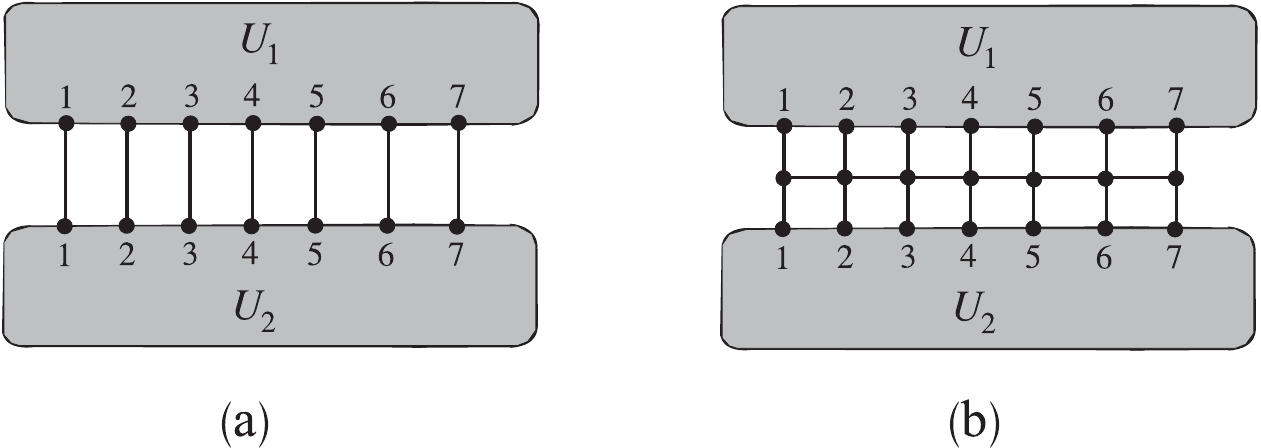}
\caption{Two U-graphs connected by a grid.}
 \label{Fig.5.2}
  \end{figure}

By a \emph{U-supergraph} we mean every graph obtained in the following way. Consider a plane connected graph $H$. Now, for every vertex $v\in V(H)$, take a 1-immersed U-graph $U(v)$ of order at least $28\cdot\deg(v)$ and for any two adjacent vertices $u$ and $w$ of the graph, connect $U(u)$ and $U(w)$ by a $(1)$- or $(2)$-grid as shown in Fig.~\ref{Fig.5.3} such that any two distinct grids have no basic vertices in common. We obtain a 1-immersed U-supergraph.

\begin{figure}
\centering
\includegraphics[width=0.46\textwidth]{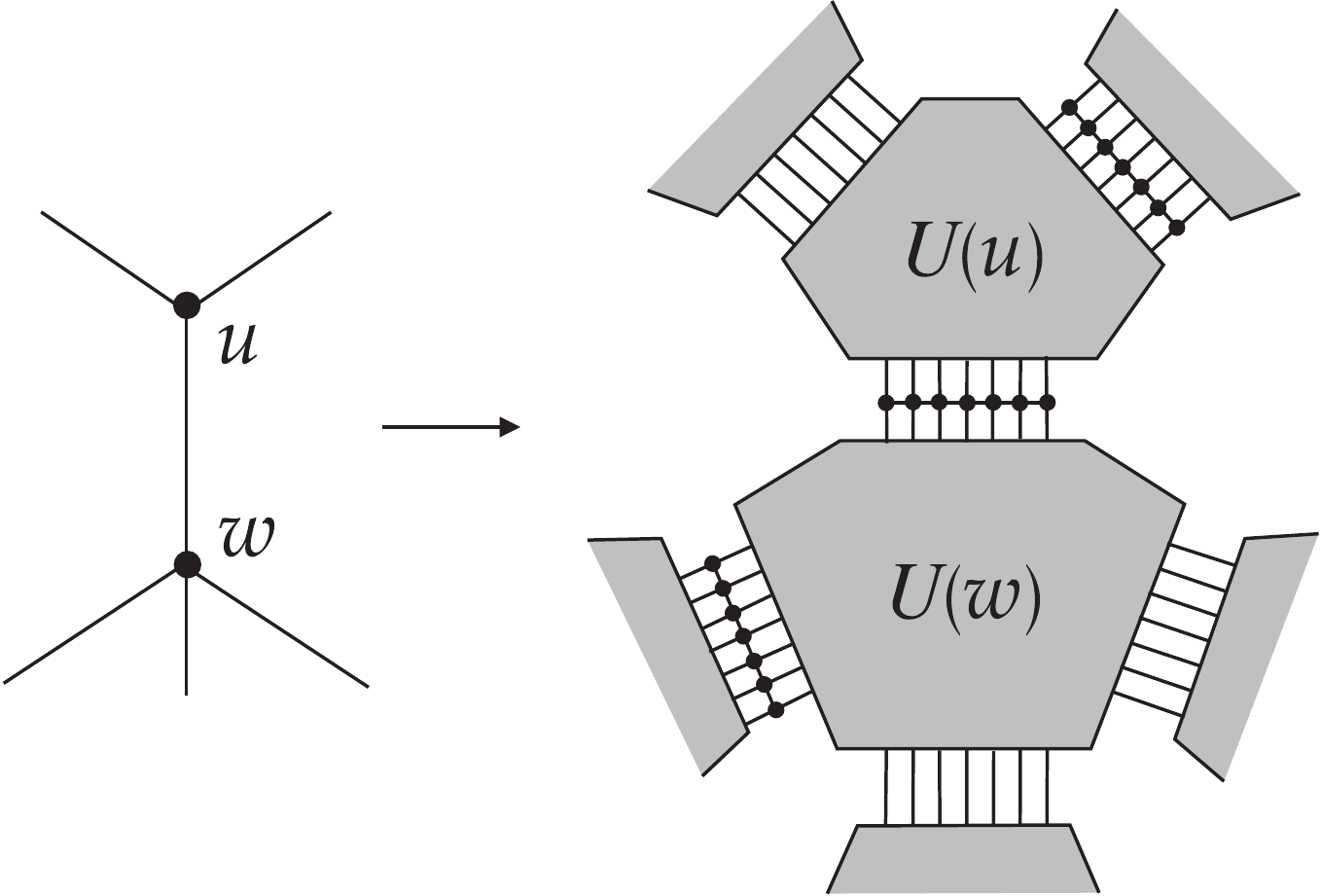}
\caption{Constructing a U-supergraph.}
 \label{Fig.5.3}
  \end{figure}

\begin{thm}
\label{thm:1}
Every U-supergraph $M$ has a unique 1-immersion.
\end{thm}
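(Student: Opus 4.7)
The plan is to leverage the fact that each U-graph has a unique 1-immersion (proved in \cite{K}) and propagate this rigidity across $M$ through the grids. Fix an arbitrary 1-immersion $\psi$ of $M$. For each vertex $v\in V(H)$, the restriction of $\psi$ to the U-subgraph $U(v)$ is a 1-immersion of $U(v)$. By the uniqueness of 1-immersions of U-graphs, the pairs of crossing edges inside $U(v)$ coincide with those shown in Fig.~\ref{Fig.5.1}, and $U(v)^\bullet$ is $3$-connected, hence (by Whitney's theorem) its planar embedding is combinatorially unique. In particular the cyclic order of the boundary vertices $1,2,\ldots,n$ along the outer boundary cycle of $U(v)$ is determined in $\psi$, up to the choice of outer face and a reflection.

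Next I would analyze how a grid connecting U-subgraphs $U(u)$ and $U(w)$ behaves in $\psi$. By construction, the seven basic vertices of the grid on $U(u)$ occupy a prescribed consecutive arc on the outer boundary cycle of $U(u)$, and similarly on $U(w)$; so once $U(u)$ and $U(w)$ are immersed, these fourteen basic vertices are pinned in place. The basic paths $P_1,\ldots,P_7$ of the grid must then join the $i$th basic vertex on $U(u)$ to the $i$th basic vertex on $U(w)$, and I would argue that they can only be routed in the face of $U(u)\cup U(w)$ that ``sees'' both arcs of basic vertices, in their natural cyclic order, with the crossings among them realized exactly as in Fig.~\ref{Fig.5.2}. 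The argument uses: (i)~the restricted crossing budget of each basic path under the 1-planarity constraint, (ii)~the triangles and short cycles inside the grid, which must be embedded, and (iii)~the separating cycles $C(P_i,P_j)$, which prevent a basic path from ``looping around'' a U-subgraph or from trading places with one of its neighbors.

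Once every grid is forced into its canonical position, $\psi$ is completely determined, except possibly for a global reflection and a global choice of outer face of $M^\bullet$ (neither operation changes the set of crossing pairs). This gives the first requirement in the definition of a unique 1-immersion. To complete the proof it remains to verify that $M^\bullet$ is $3$-connected: each $U(v)^\bullet$ is $3$-connected, and each grid contributes seven internally disjoint basic paths between $U(u)^\bullet$ and $U(w)^\bullet$, so any hypothetical $2$-vertex cut of $M^\bullet$ would have to lie entirely inside some $U(v)^\bullet$ or cut through a single grid; both possibilities are excluded by the $3$-connectivity of $U(v)^\bullet$ and by the internal redundancy of the grid shown in Fig.~\ref{Fig.5.2}.

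The main obstacle I expect is the grid-rigidity step. The subtle point is to rule out alternative routings of the basic paths: could two basic paths be swapped, could a basic path wrap around one of the U-subgraphs and approach its endpoint ``from the wrong side,'' or could a $(2)$-grid be drawn with the local crossing pattern of a $(1)$-grid together with some edges rerouted through a different face of $U(u)^\bullet$? These alternatives do not obviously violate $1$-planarity, and eliminating them requires a careful edge-crossing count using the separating cycles $C(P_i,P_j)$ together with the already-determined embeddings of the boundary cycles of $U(u)$ and $U(w)$, in the same spirit as the rigidity proof for U-graphs in \cite{K}.
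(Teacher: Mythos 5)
Your overall strategy coincides with the paper's: use the rigidity of each U-subgraph and then show that every grid is forced into the canonical position of Fig.~\ref{Fig.5.2}. The problem is that the second step is where the entire content of the theorem lives, and your proposal leaves it at the level of ``I would argue that they can only be routed \dots'' and ``eliminating them requires a careful edge-crossing count.'' You correctly name the tools (crossing budgets, embedded short cycles, the separating cycles $C(P_i,P_j)$) and even the right list of pathologies to exclude, but none of the counts is actually performed, and they are not routine. The paper's proof consists precisely of these counts: (i) if $U_1$ and $U_2$ are immersed with opposite orientations, two non-crossing basic paths $P_i,P_j$ yield an embedded cycle that must be crossed by the remaining $5$ basic paths while having only $2h\le 4$ crossable edges; (ii) if two basic paths of the same grid cross exactly once, the two dashed cycles of Fig.~\ref{Fig.5.5}(b) have only $2h-2\le 2$ crossable edges but must absorb $5$ crossings, and if a basic path of $\Gamma$ crosses a basic path $Q$ of a different grid $\Gamma'$ exactly once, then every cycle $C(Q,Q')$ must cross the embedded cycle $C(P_i,P_j)$ twice, forcing all $7$ basic paths of $\Gamma'$ to cross $P_i$ or $P_j$, which exceeds their budget; (iii) double crossings are excluded by a parity argument ($7$ is odd, so some basic path of $\Gamma$ is free), combined with the claim that the edge joining the middle vertices of two non-crossing neighboring basic paths must lie \emph{inside} $C(P_{i-1},P_i)$, which then traps the two U-subgraphs of $\Gamma'$ inside two $4$-cycles even though they are joined by at least four further basic paths. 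Without carrying out at least this level of case analysis, the claim that ``once every grid is forced into its canonical position, $\psi$ is completely determined'' is an assertion, not a proof.

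Two further points. First, before any of the grid analysis one needs the preliminary observation (the paper's Fig.~\ref{Fig.5.4} remark) that no subgraph of $M$ can be immersed inside the outer boundary cycle of a U-subgraph, so that boundary edges of U-subgraphs are never crossed; your ``pinned in place'' step tacitly assumes this, since otherwise a basic path could enter a U-subgraph or approach its endpoint from the interior side. Second, the grid of a $(2)$-grid also contains the edges joining the middle vertices of neighboring basic paths, and one must separately verify that these are uncrossed (the paper does this last, using the same trapped-inside-$C_1$-and-$C_2$ argument); your proposal does not mention them. The $3$-connectivity check of $M^\bullet$ that you add at the end is reasonable and not controversial, but it is not where the difficulty of the theorem lies.
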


\begin{proof}
It suffices to show the following:
\begin{itemize}
    \item [(a)]  The graph consisting of two U-graphs connected by an $(h)$-grid, $h=1,2$, has a unique 1-immersion.
    \item [(b)] In every 1-immersion $\varphi$ of $M$, the edges of distinct grids do not intersect.
\end{itemize}
Note that $M$ contains no subgraph which can be 1-immersed inside the boundary cycle of a 1-immersed U-subgraph of $M$ in a 1-immersion of $M$  as shown in Fig.~\ref{Fig.5.4}  in dashed line. Hence, in every 1-immersion of $M$, the boundary edges of the U-subgraphs of $M$ are not crossed.

\begin{figure}
\centering
\includegraphics[width=0.22\textwidth]{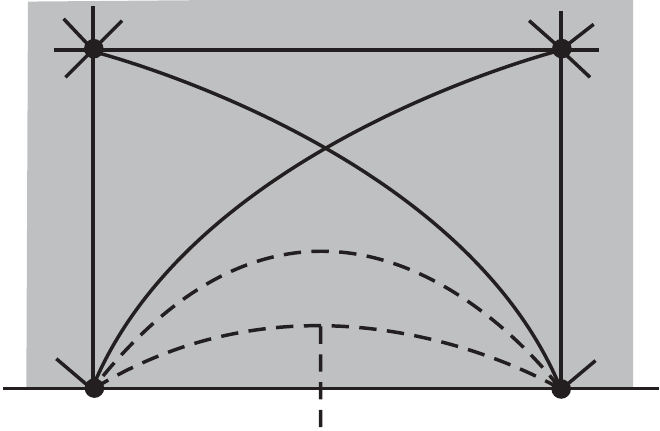}
\caption{A 1-immersion of a subgraph.}
 \label{Fig.5.4}
  \end{figure}

We prove (a) and (b) in the following way. We consider a 1-immersed subgraph $W$ of $M$ (cf.\ Fig.~\ref{Fig.5.2}) consisting of two U-graphs $U_1$ and $U_2$ connected by an $(h)$-grid $\Gamma$, $h\in\{1,2\}$, and we show that in every 1-immersion $\varphi$ of $M$, the graph $W$ has the same 1-immersion and the edges of $\Gamma$ are not crossed by edges of other grids.

Suppose, for a contradiction, that $U_1$ and $U_2$ are 1-immersed under $\varphi$ as shown in Fig.~\ref{Fig.5.5}(a) (the situation described in the figure arises when $U_1$ is drawn clockwise and $U_2$ counter-clockwise (or vice versa)). Clearly, there are two basic paths $P_i$ and $P_j$ of $\Gamma$, $1\leq i<j\leq 7$, which do not intersect. Then the cycle shown in Fig.~\ref{Fig.5.5}(a) in thick line is embedded in the plane, a contradiction, since the cycle is crossed by 5 other basic paths of $\Gamma$, but the cycle has only $2h\leq 4$ edges that can be crossed by other edges. Hence, $U_1$ and $U_2$ are 1-immersed as shown in Fig.~\ref{Fig.5.2}.

\begin{figure}
\centering
\includegraphics[width=1.00\textwidth]{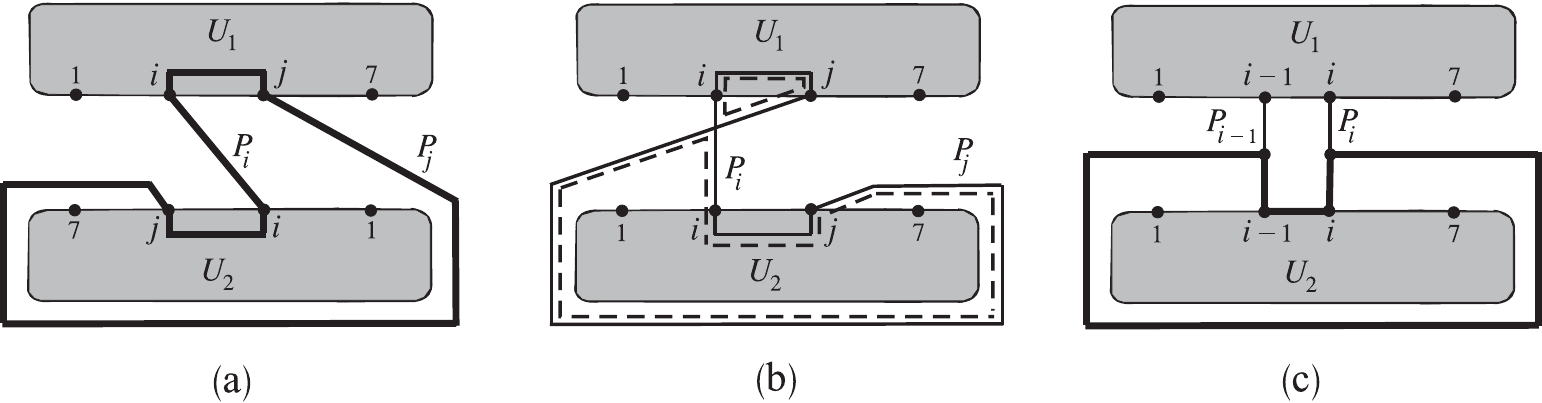}
\caption{Cycles of two adjacent 1-immersed U-subgraphs.}
 \label{Fig.5.5}
  \end{figure}

 Suppose that in $\varphi$, a basic path $P_i$ of $\Gamma$ crosses a basic path $Q$ of some grid of $M$ exactly once. If $Q=P_j$ is a basic path of $\Gamma$, $j\not=i$ (see Fig.~\ref{Fig.5.5}(b)), then the closed curves $C_1$ and $C_2$ shown in Fig.~\ref{Fig.5.5}(b) by dashed cycles, are embedded in the plane and each of the other five basic paths of $\Gamma$ crosses an edge of $C_1$ or $C_2$, a contradiction, since $C_1$ and $C_2$ have $2h-2\leq 2$ edges in total which can be crossed by other edges. If $Q$ is a basic path of a grid $\Gamma'$ different from $\Gamma$, then there is a basic path $P_j$, $j\not= i$, of $\Gamma$ such that $P_j$ is not crossed by $P_i$ and $Q$. Hence, the cycle $C(P_i,P_j)$ is embedded and $Q$ crosses the edges of the cycle exactly once. Then for every other basic path $Q'$ of $\Gamma'$, the cycle $C(Q,Q')$ crosses $C(P_i,P_j)$ at least twice and $Q'$ crosses $P_i$ or $P_j$ (the edges of different U-subgraphs do not intersect). We have that the 7 basic paths of $\Gamma'$ cross $P_i$ and $P_j$, a contradiction. Hence, in $\varphi$, if two basic paths intersect, then they intersect twice. In particular, only basic paths of (2)-grids can intersect.

 Now we claim the following:

 (E) If in $\varphi$ two neighboring basic paths $P_{i-1}$ and $P_i$ of the $(2)$-grid $\Gamma$ do not intersect, then the edge $e$ joining the middle vertices of $P_{i-1}$ and $P_i$ lies inside the embedded cycle $C(P_{i-1},P_i)$.

 Indeed, if $e$ lies outside $C(P_{i-1},P_i)$, then the 4-cycle shown in Fig.~\ref{Fig.5.5}(c) in thick line is crossed by 5 basic paths $P_r$, $r\not=i-1,i$, a contradiction.

 Suppose that in $\varphi$, a basic path of $\Gamma$ crosses a basic path of a grid $\Gamma'$ twice (that is, $\Gamma$ and $\Gamma'$ are $(2)$-grids). Since the number of basic paths of $\Gamma$ is odd (namely, 7), it can not be that every basic path of $\Gamma$ crosses some other basic path of $\Gamma$ twice. Then there is a basic path of $\Gamma$ which is not crossed by other basic paths of $\Gamma$. Hence, if $\Gamma=\Gamma'$ (resp. $\Gamma\not=\Gamma'$), then there are two neighboring basic paths $P_{i-1}$ and $P_i$ of $\Gamma$ such that one of them, say, $P_{i-1}$, is crossed twice by some basic path $Q$ of $\Gamma$ (resp. $\Gamma'$), and the other basic path $P_i\not=Q$ is not crossed by $Q$. Then the cycle $C(P_{i-1},P_i)$ is embedded. By (E), the edge $e$ joining the middle vertices of $P_{i-1}$ and $P_i$ lies inside $C(P_{i-1},P_i)$. Denote by $C_1$ and $C_2$ the two embedded adjacent 4-cycles each of which consists of $e$ and edges of the 6-cycle $C(P_{i-1},P_i)$. The middle vertex of $Q$ lies outside $C(P_{i-1},P_i)$ and the two end vertices of $Q$ lie inside $C_1$ and $C_2$, respectively. The end vertices of $Q$ belong to two U-subgraphs connected by $\Gamma'$. Since the edges of the two U-subgraphs do not cross the edges of $C_1$ and $C_2$, we obtain that one of the U-subgraphs lies inside $C_1$ and the other lies inside $C_2$, a contradiction, since the two U-subgraphs are connected by at least four basic paths different from $Q$, $P_{i-1}$, and $P_i$. Hence, no basic path of $\Gamma$ crosses some other basic path twice.

 We conclude that the basic paths of the grids connecting U-subgraphs do not intersect.

 Now it remains to show that if $\Gamma$ is a $(2)$-grid, then the edges joining the middle vertices of the basic paths of $\Gamma$ are not crossed. Consider any two neighboring basic paths $P_{i-1}$ and $P_i$ of $\Gamma$. The cycle $C(P_{i-1},P_i)$ is embedded and, by (E), the edge $e$ joining the middle vertices of $P_{i-1}$ and $P_i$ lies inside $C(P_{i-1},P_i)$. It is easy to see that for every edge $e'$ of $G$ not belonging to U-subgraphs and different from $e$ and the edges of $C(P_{i-1},P_i)$, in the graph $G-e'$ the end vertices of $e'$ are connected by a path which consists of edges of U-subgraphs and basic paths of grids and which does not pass through the vertices of $C(P_{i-1},P_i)$. Now, if the edge $e$ is crossed by some other edge $e'$, then $e'$ is not an edge of a U-subgraph, the end vertices of $e'$ lie inside the cycles $C_1$ and $C_2$, respectively (where $C_1$ and $C_2$ are defined as in the preceding paragraph) whose edges are not crossed by edges of U-subgraphs and basic paths, a contradiction.

 Therefore, the edges of $M$ do not intersect and that the graph $W$ has a unique 1-immersion. This completes the proof of the theorem.
\end{proof}

Now, given a plane graph $G$ every vertex of which has degree 3 or 4, we construct a graph $\overline{G}$ such that $G$ is 3-colorable if and only if $\overline{G}$ is 1-immersible. To obtain $\overline{G}$, we proceed as follows. First we construct a subgraph $G^{(1)}$ \textbf{of} $\overline{G}$ such that $G^{(1)}$ has a unique 1-immersion. The graph $G^{(1)}$ is obtained from a U-supergraph $W$ by adding some additional vertices and edges. By inspection of the subsequent figures which illustrate the construction of $G^{(1)}$ and its 1-immersion, the reader will easily identify the additional vertices and edges:
they do not belong to U-subgraphs and grids. Then one can easily check that given the 1-immersion of $W$, the additional vertices and edges can be placed in the plane in a unique way to obtain a 1-immersion of $G^{(1)}$, hence $G^{(1)}$ has a unique 1-immersion also. Now, given the unique 1-immersion of $G^{(1)}$, to construct $\overline{G}$ we place some new additional paths ``between" 1-immersed U-subgraphs of $G^{(1)}$. Notice that due to circle inversion one may assume that in a 1-planar drawing of $\overline{G}$ each U-subgraph is drawn in such a way that the outer boundary cycle is containing all other vertices of the U-subgraph inside the region it bounds.

\begin{figure}
\centering
\includegraphics[width=0.90\textwidth]{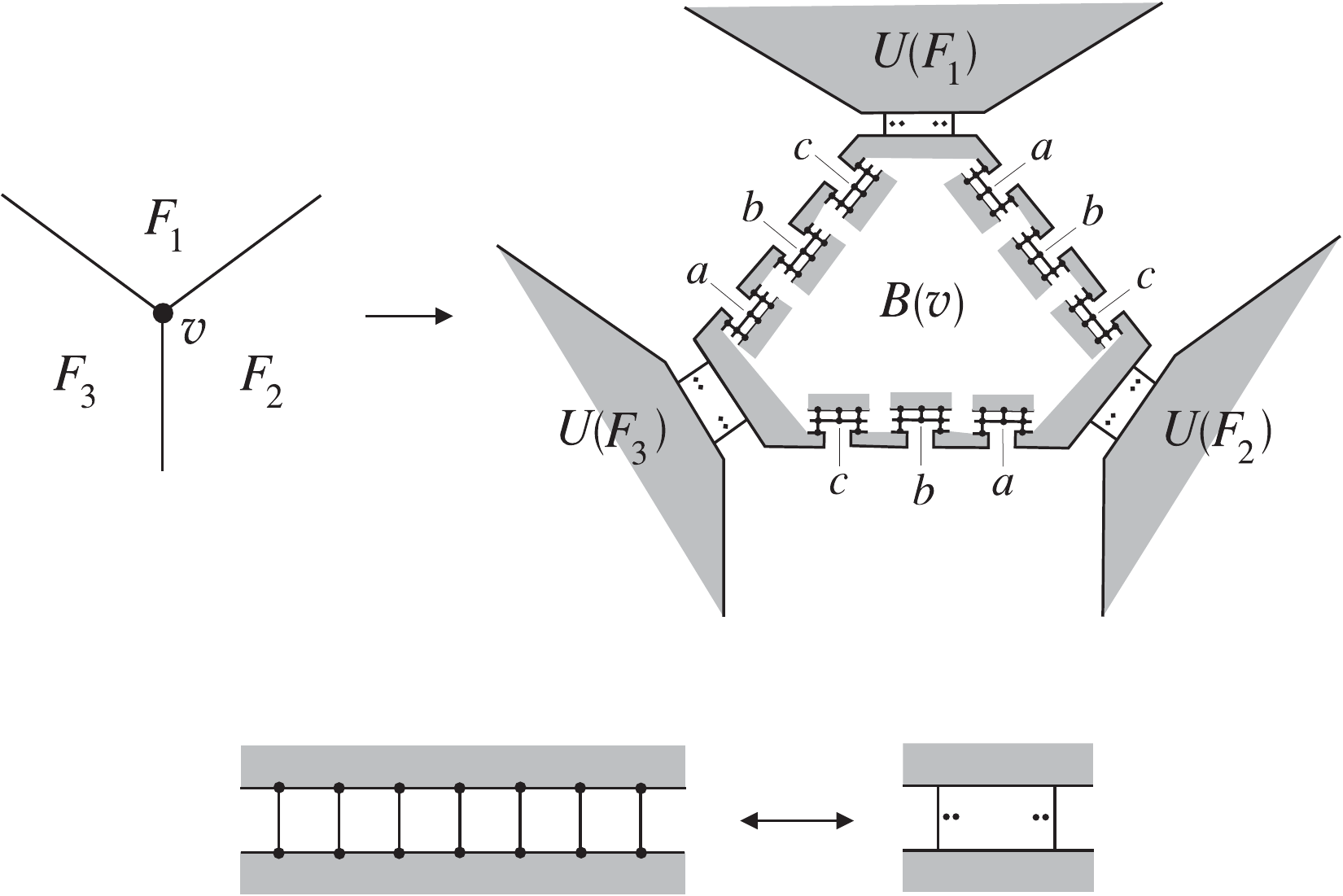}
\caption{Constructing the graph $G^{(1)}$.}
 \label{Fig.5.6}
\end{figure}

The graph $G^{(1)}$ is obtained from the plane graph $G$ if we replace every face $F$ of the embedding of $G$ by a U-graph $U(F)$ and replace every vertex $v$ by a \emph{vertex-block} $B(v)$ as shown at the top of Fig.~\ref{Fig.5.6}. At the bottom of Fig.~\ref{Fig.5.6} we show the designation of a (1)-grid used at the top of the figure and at what follows. The vertex-block $B(v)$ has a unique 1-immersion and is obtained from a U-supergraph by adding some additional vertices and edges. Fig.~\ref{Fig.5.6} shows schematically the boundary of $B(v)$ and Fig.~\ref{Fig.5.9}   shows $B(v)$ in more detail. For a $k$-valent vertex $v$ of $G$, $3\leq k\leq 4$, the vertex-block $B(v)$ has $3k$ \emph{boundary vertices} labeled clockwise as $a,b,c,a,b,c,\ldots,a,b,c$; these vertices do not belong to U-subgraphs of $B(v)$. In Figs.~\ref{Fig.5.6} and \ref{Fig.5.9} we only show the case of a 3-valent vertex $v$; for a 4-valent vertex the construction is analogous -- there are three more boundary vertices labeled $a,b,c$, respectively.

We say that vertex-blocks $B(v)$ and $B(w)$ are adjacent if $v$ and $w$ are adjacent vertices of $G$.

\begin{figure}[htb]
\centering
\includegraphics[width=0.90\textwidth]{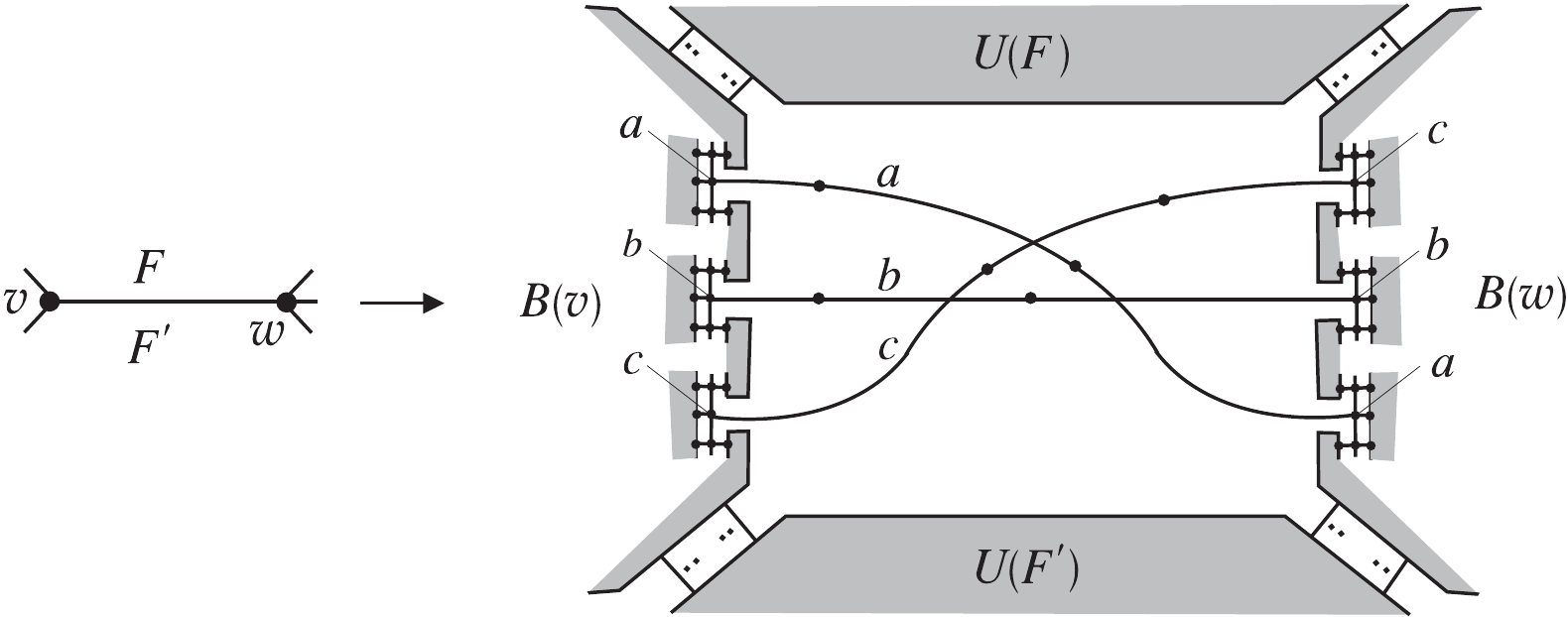}
\caption{The pending paths connecting adjacent vertex-blocks.}
 \label{Fig.5.7}
  \end{figure}

The graph $\overline{G}$ is obtained from $G^{(1)}$ if we take a collection of additional disjoint paths of length $\geq 1$ (they are called the \emph{pending paths}) and identify the end vertices of every path with two vertices, respectively, of $G^{(1)}$. The graph $G^{(1)}$ has a unique 1-immersion and the edges of the U-subgraphs of $G^{(1)}$ can not be crossed by the pending paths, hence the 1-immersed $G^{(1)}$ restricts the ways in which the pending paths can be placed in the plane to obtain a 1-immersion of $\overline{G}$.
Every pending path connects either boundary vertices of adjacent vertex-blocks or vertices of the same vertex-block.

The graph $\overline{G}$ is such that for any two adjacent vertex-blocks, there are exactly three pending paths connecting the vertices of the vertex-blocks. The paths have length 3 and are shown in Fig.~\ref{Fig.5.7}; we say that these pending paths are incident with the two vertex-blocks. Each of the three pending paths connects the boundary vertices labeled by the same letter: $a$, $b$, or $c$. For $h\in\{a,b,c\}$, the pending path connecting vertices labeled $h$ is called the \emph{$(h)$-path\/} connecting the two vertex-blocks. In Fig.~\ref{Fig.5.7} the $(h)$-path, $h\in\{a,b,c\}$, is labeled by the letter $h$.

 Denote by $G^{(2)}$ the graph obtained from $G^{(1)}$ if we add all triples of pending paths connecting vertex-blocks $B(v), B(w)$, for all edges $vw\in E(G)$.

\begin{figure}[htb]
\centering
\includegraphics[width=0.70\textwidth]{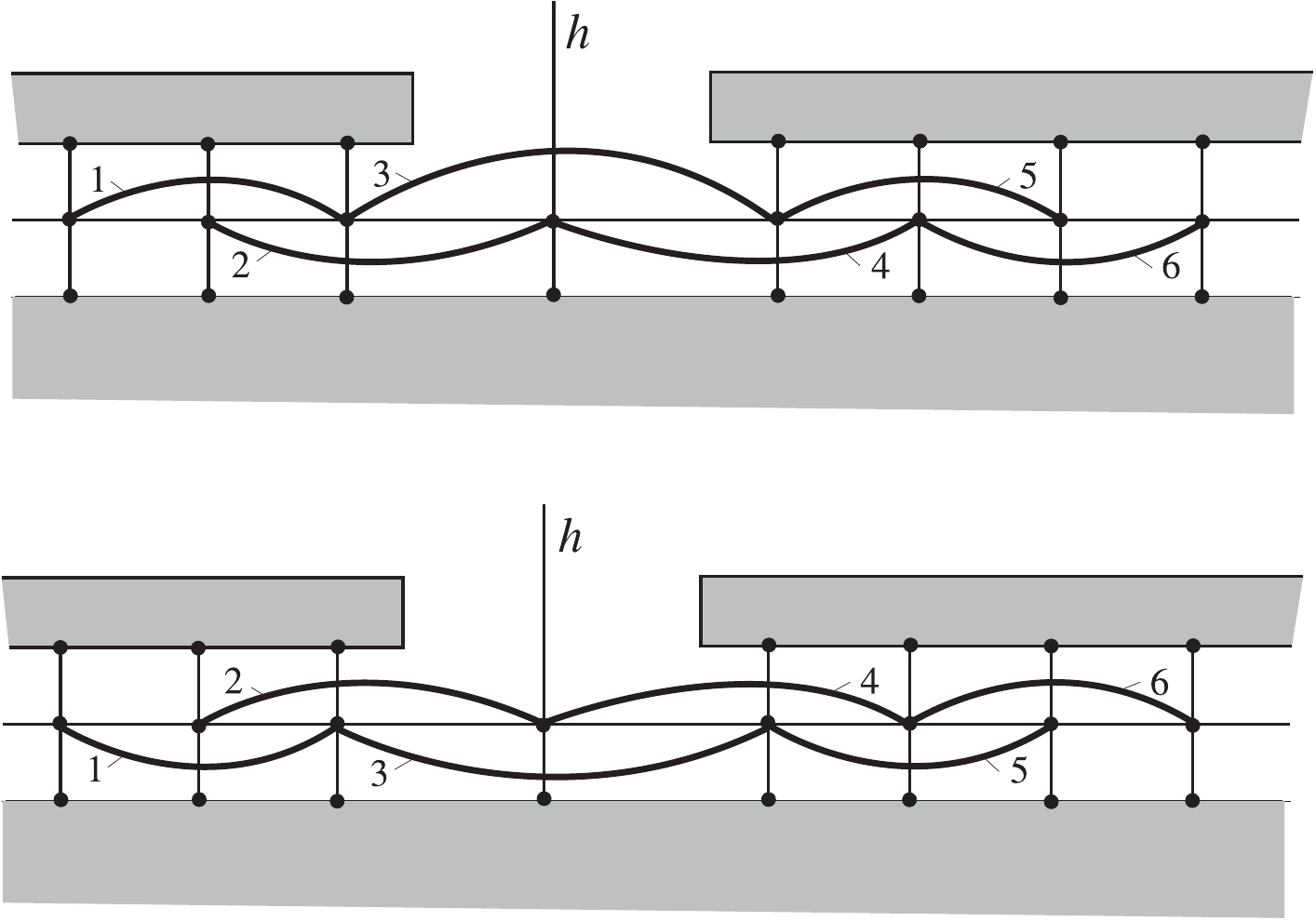}
\caption{Pending paths of an $h$-family.}
 \label{Fig.5.8}
  \end{figure}

The graph $\overline{G}$ also satisfies the properties stated below. The pending paths connecting vertices of the same vertex-block $B(v)$ are divided into three families called, respectively, the $a\,$-, $b\,$-, and \emph{$c$-families} of $B(v)$. Given the 1-immersion of $G^{(1)}$, for every $h\in\{a,b,c\}$, the $h$-family of $B(v)$ has the following properties:
 \begin{itemize}
   \item [(i)] Every path $P$ of the $h$-family admits exactly two embeddings in the plane such that we obtain a 1-immersion of $G^{(2)}\cup P$.
   \item [(ii)] The $h$-family consists of paths $P_1,P_2,\ldots,P_n$ such that the graph $G^{(2)}\cup P_1\cup P_2\cup\cdots\cup P_n$ has exactly two 1-immersions. In the two 1-immersions, every path $P_i$ uses its two embeddings. In one of the 1-immersions, paths of the $h$-family cross all $(h)$-paths incident with $B(v)$. In the other 1-immersion, the paths of the $h$-family do not cross any $(h)$-path incident with $B(v)$.
       {\sloppy

       }
 \end{itemize}

 Fig.~\ref{Fig.5.8} shows fragments of the two 1-immersions of the union of $G^{(2)}$ and the pending paths of an $h$-family. In the figure, each of the depicted (in thick line) six edges of the family, they are labeled by $1,2,\ldots,6$, respectively, uses its two embeddings in one of the two 1-immersions.

 If in a 1-immersion of $\overline{G}$, paths of an $h$-family of $B(v)$, $h\in\{a,b,c\}$, cross $(h)$-paths incident with $B(v)$, then we say that the $h$-family of $B(v)$ is \emph{activated} in the 1-immersion of $\overline{G}$.

\begin{figure}[htb]
\centering
\includegraphics[width=0.85\textwidth]{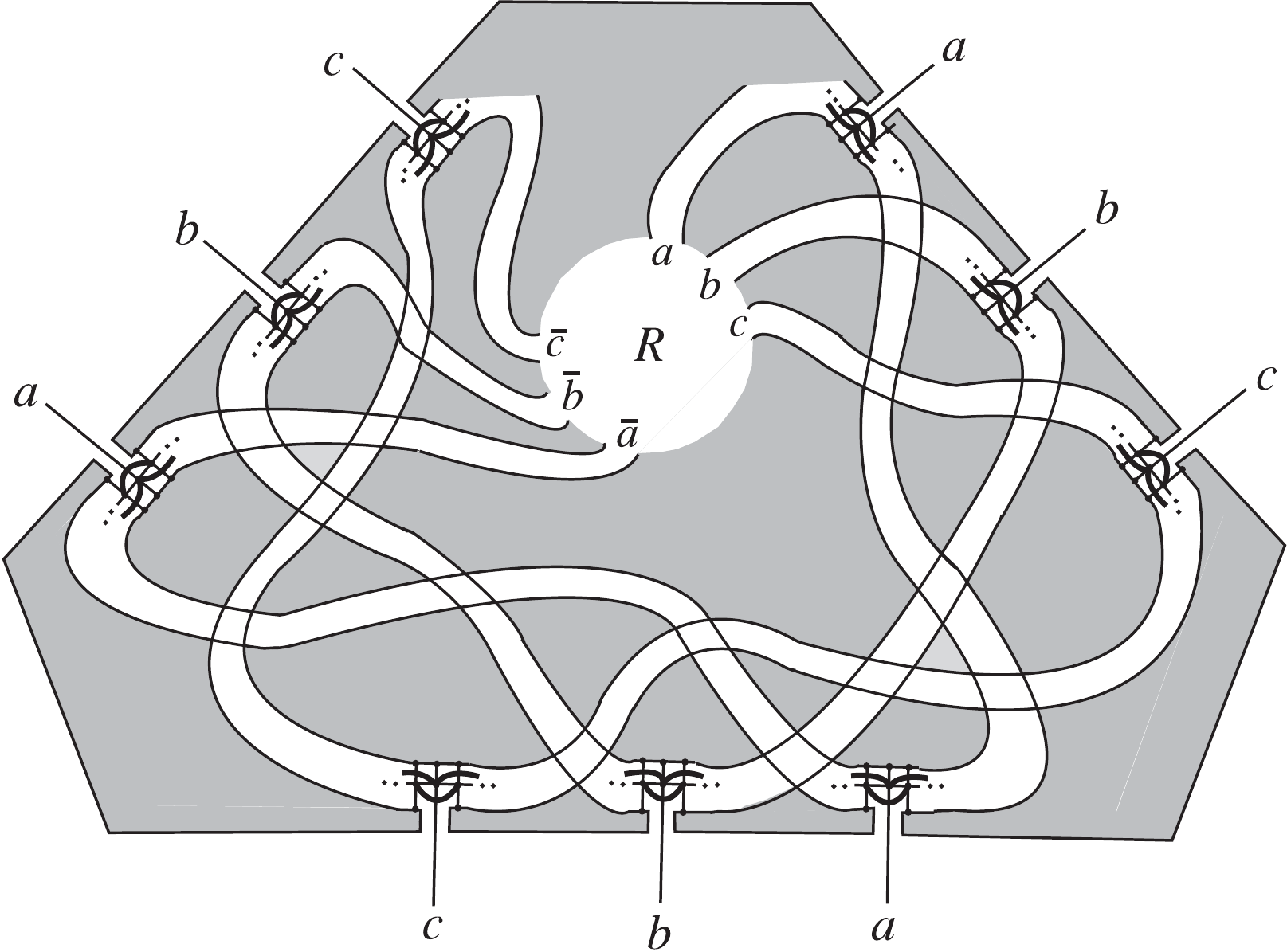}
\caption{A vertex-block and the activated $h$-families.}
 \label{Fig.5.9}
  \end{figure}

 Figs.~\ref{Fig.5.9} and \ref{Fig.5.10} show a vertex-block $B(v)$ and the $h$-families of the vertex-block ($h=a,b,c$) in the case where $v$ is 3-valent (the generalization for a 4-valent vertex $v$ is straightforward). The pending paths of the three $h$-families are shown by thick lines and the three families are activated. To avoid cluttering a figure, Fig.~\ref{Fig.5.9} contains a fragment denoted by $R$ which is given in more detail in Fig.~\ref{Fig.5.10}. Recall that the grey areas in Figs.~\ref{Fig.5.9} and \ref{Fig.5.10} represent U-graphs.

\begin{figure}[t!]
\centering
\includegraphics[width=0.7\textwidth]{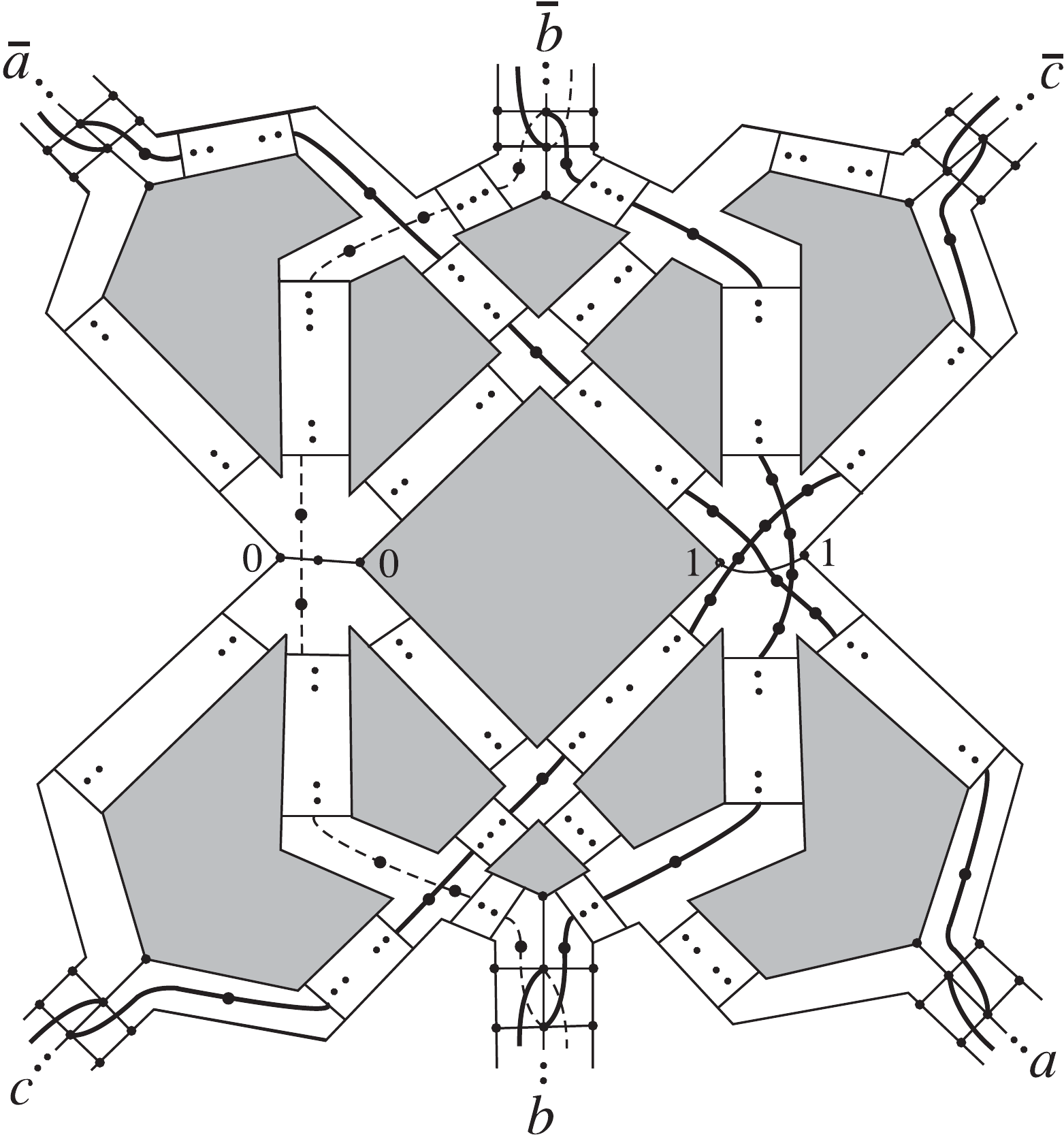}
\caption{The fragment $R$ of the vertex-block in Fig.~\ref{Fig.5.9}.}
 \label{Fig.5.10}
  \end{figure}

In Figs.~\ref{Fig.5.9} and \ref{Fig.5.10} we use designations of some fragments of $B(v)$; the designations are given at the left of Fig.~\ref{Fig.5.11}  and the corresponding fragments are given at the right of Fig.~\ref{Fig.5.11} (that is, the connections between the grey areas in Figs.~\ref{Fig.5.9} and \ref{Fig.5.10} consist of seven basic paths).  The reader can easily check that for every pending path $P$ of the three families, there are exactly two ways to embed the path so that we obtain a 1-immersion of $G^{(2)}\cup P$. The vertex-block $B(v)$ contains a 2-path connecting the vertices labeled 0 in Fig.~\ref{Fig.5.10} and a 1-path connecting the vertices labeled 1 in Fig.~\ref{Fig.5.10}; we call the paths the (0)- and (1)-blocking paths, respectively). For every $h\in\{a,b,c\}$, exactly one pending path of the $h$-family of $B(v)$ crosses a blocking path: the pending path has length 33, crosses the (1)-blocking (resp.\ (0)-blocking) path when the $h$-family is activated (resp.\ not activated), and the pending path in each of its two embeddings crosses exactly one pending path of each of the other two families. Fig.~\ref{Fig.5.10} shows the two embeddings of the pending 33-path of the $b$-family (one of them is in thick line, the other, when the family is not activated, is in dashed line). Note that Fig.~\ref{Fig.5.10} shows something that is not a 1-immersion, since all three families of paths are activated, and the $(1)$-blocking path is crossed three times.

\begin{figure}[t!]
\centering
\includegraphics[width=0.78\textwidth]{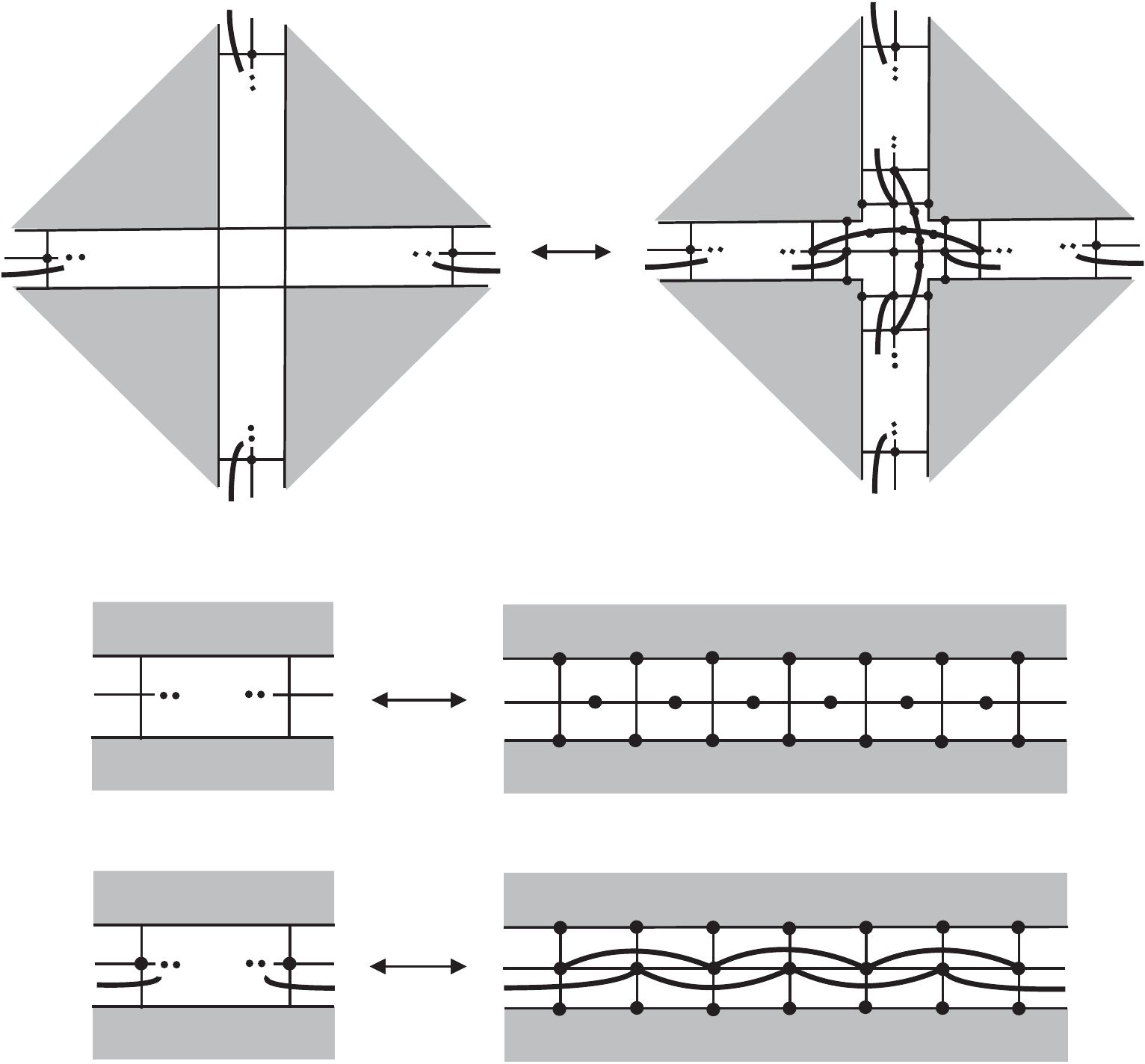}
\caption{The designations of fragments of the vertex-block $B(v)$.}
 \label{Fig.5.11}
  \end{figure}

Denote by $G^{(2)}_v$ the union of $G^{(2)}$ and the paths of all three $h$-families of $B(v)$. Now the reader can check that $B(v)$ and the $h$-families of $B(v)$ are constructed in such a way that the following holds:

\medskip

  (F) In every 1-immersion of $G^{(2)}_v$ (and, hence, of $\overline{G}$) exactly one $h$-family of $B(v)$ is activated, and for each $h\in\{a,b,c\}$, there is a 1-immersion of $G^{(2)}_v$ in which the $h$-family of $B(v)$ is activated.

\medskip

By construction of $\overline{G}$, if $\overline{G}$ has a 1-immersion, then in the 1-immersion for every vertex $v$ of $G$, exactly one $h$-family ($h\in\{a,b,c\}$) of $B(v)$ is activated and taking Fig.~\ref{Fig.5.7} into account we obtain that in the 1-immersion the $h$-families of the vertex-blocks adjacent to $B(v)$ are not activated.

Now take a 1-immersion of $\overline{G}$ (if it exists) and assign every vertex $v$ of $G$ a color $h\in\{a,b,c\}$ such that the $h$-family of $B(v)$ is activated in the 1-immersion of $\overline{G}$. We obtain a proper 3-coloring of $G$ with colors $\{a,b,c\}$.

Take a proper 3-coloring of $G$ (if it exists) with colors $\{a,b,c\}$ and for every vertex $v$ of $G$, if $h(v)$ is the color of $v$, take the  $h(v)$-family of $B(v)$ to be activated and the other two families not to be activated. By the construction of $\overline{G}$, and by the mentioned properties of 1-immersions of its subgraphs $B_v^{(2)}$, it follows that we obtain a 1-immersion of $\overline{G}$.

When constructing $\overline{G}$, we choose the order of every U-subgraph such that every boundary vertex of the U-subgraph is incident with an edge not belonging to the U-subgraph. This implies that for every face $F$ of size $k$ of the plane embedding of $G$, the number of edges in the U-graph $U(F)$ is bounded by a constant multiple of $k$. Similarly, for each $v\in V(G)$, the union of $B(v)$ and its three $h$-families has constant size. Therefore, the whole construction of $\overline{G}$ can be carried over in linear time. This completes the proof of Theorem~\ref{thm:NPC1}.

\section{$k$-planarity testing for multigraphs}
\label{Sec:6}

A graph drawn in the plane is $k$-\emph{immersed} in the plane ($k\geq 1$) if any edge is crossed by at most $k$ other edges (and any pair of crossing edges cross only once). A graph is $k$-\emph{planar} if it can be $k$-immersed into the plane.

It appears that we can slightly modify the proof of Theorem~\ref{thm:NPC1} so as to obtain a proof that $k$-planarity testing ($k\geq 2$) for multigraphs is NP-complete. Below we give only a sketch of the proof, the reader can easily fill in the missing details.

Denote by $G(k)$, $\overline{G}(k)$, and $G^{(1)}(k)$, respectively, the multigraphs obtained from the graphs $G$, $\overline{G}$, and $G^{(1)}$ if we replace every edge by $k$ parallel edges. For an edge $e$ of the multigraphs denote by $H(e)$ the set consisting of $e$ and all other $k-1$ edges parallel to $e$. Denote by $\varphi$ the unique plane 1-immersion of $G^{(1)}$, and by $\varphi_k$ the plane $k$-immersion of $G^{(1)}(k)$ obtained from $\varphi$ if we replace every edge of $G^{(1)}$ by $k$ parallel edges.

\begin{lem}
\label{lem:6.1}
The multigraph $G^{(1)}(k)$, $k\geq 2$, has a unique plane $k$-immersion.
\end{lem}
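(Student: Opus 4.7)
The plan is to reduce the problem to the uniqueness of the $1$-immersion $\varphi$ of $G^{(1)}$ via a bundle-contraction argument. Let $\psi$ be an arbitrary plane $k$-immersion of $G^{(1)}(k)$; the goal is to show that $\psi$ is combinatorially equivalent to $\varphi_k$, i.e.\ that the overall crossing pattern coincides with that of $\varphi_k$.

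The first step is local: any two of the $k$ parallel edges of a bundle $H(e)$ share both endpoints, hence are adjacent, and so by the no-adjacent-crossings convention (which is part of the definition of $k$-immersions) they do not cross in $\psi$. Therefore $H(e)$ forms a ``pencil'' from $u$ to $v$ bounding $k-1$ internal bigonal regions. I would next argue that no vertex of $G^{(1)}(k)$ lies in any such internal bigon: a vertex $w$ sitting there could only reach the remainder of the graph via edges crossing one of the two boundary edges of its bigon, which in the underlying simple graph $G^{(1)}$ would make $\{u,v\}$ a $2$-vertex cut, contradicting the $3$-connectivity inherited from the U-supergraph and vertex-block structure assembled in Section~\ref{sect:NPC} (and used in the proof of Theorem~\ref{thm:1}).

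The second step is the crucial rigidity: in $\psi$, whenever a bundle $H(e)$ is crossed by edges from another bundle $H(f)$, all $k$ edges of $H(f)$ must traverse the fan of $H(e)$ in the same ``slot,'' so that each edge $e_i$ of $H(e)$ is crossed either by no edges of $H(f)$ or by all of them. Topologically, each edge of $H(f)$ is a curve from $x$ to $y$ meeting each $e_i$ at most once, and since the internal bigons contain no vertices, such a curve, upon entering a bigon, must exit through an adjacent boundary edge. A priori the $k$ edges of $H(f)$ could take topologically different transversals around the fan, but this is ruled out by the dense surrounding structure: alternative transversals force some neighboring edge or bundle of $G^{(1)}(k)$ (in particular inside the tightly packed U-subgraphs) to exceed its crossing budget of $k$. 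Once this ``all or nothing'' principle is established, each edge $e_i$ has its full budget consumed by $H(f)$, so no third bundle can cross $H(e)$, and each bundle meets at most one other bundle in $\psi$.

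Given these two steps, collapse each bundle $H(e)$ to a single curve (say its median edge); by Step~1 the bigons are vertex-free so the contraction is topologically well-defined, and by Step~2 the resulting drawing $\widetilde{\psi}$ of $G^{(1)}$ has each edge crossed by at most one other, i.e.\ is a genuine $1$-immersion of $G^{(1)}$. Theorem~\ref{thm:1} together with the rigidity of the vertex-blocks and pending paths added to the U-supergraph to form $G^{(1)}$ yields that $G^{(1)}$ has a unique $1$-immersion, so $\widetilde\psi$ is combinatorially equivalent to $\varphi$. Re-inflating each contracted curve into its bundle shows that $\psi$ is combinatorially equivalent to $\varphi_k$, proving the lemma.

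The main obstacle is clearly Step~2, the ``uniform traversal'' of fans. The delicate point is that different edges of $H(f)$ are not a priori forced to be parallel in the drawing and so could potentially cross different edges of the pencil $H(e)$, or even go around the pencil on different sides; ruling this out requires carefully invoking the $k$-crossing budget together with the rigid $3$-connected neighborhoods supplied by the surrounding U-subgraphs. This is the step that fully uses the heavy combinatorial machinery developed in Section~\ref{sect:NPC}, whereas Steps~1 and~3 are essentially formal consequences.
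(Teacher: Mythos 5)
Your overall architecture --- reduce to the uniqueness of the $1$-immersion of $G^{(1)}$ by showing that bundles cross each other in an all-or-nothing fashion, then pass to one representative per bundle --- is exactly the paper's route. But the step you yourself single out as the crux (your Step~2) is the one you do not prove: you only assert that ``alternative transversals force some neighboring edge or bundle to exceed its crossing budget,'' without exhibiting which edge and why. The paper closes this with a short, localized argument that is absent from your write-up. Suppose $e_1\in H(e)$ crosses $e_2\in H(f)$ but not $e_2'\in H(f)$. Since $e_1$ may cross $e_2$ only once and cannot cross $e_2'$ at all, $e_1$ enters the bigon $D$ bounded by $e_2\cup e_2'$ and cannot leave it, so an endpoint of $e$ is trapped inside $D$. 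The boundary of $D$ admits at most $2k$ crossings in total, and every adjacency of $G^{(1)}$ between a vertex inside $D$ and a vertex outside $D$ consumes $k$ of them; hence at most two outside vertices are adjacent to the inside of $D$. This configuration produces a second, combinatorially different plane $1$-immersion of $G^{(1)}$ (the edge of $G^{(1)}$ joining the trapped endpoint to its neighbour can be placed in two positions), contradicting the uniqueness of $\varphi$. The contradiction thus comes from the uniqueness of $\varphi$ itself, not from $3$-connectivity or from overloading edges inside the U-subgraphs as you suggest.

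Two further points. First, your justification of Step~1 is incorrect as stated: a vertex $w$ sitting inside an internal bigon of the pencil $H(e)$ is still joined to the rest of the graph by the edges that cross the bigon's boundary, so $\{u,v\}$ is not thereby a $2$-vertex cut of $G^{(1)}$; the correct obstruction is again the crossing-budget count combined with the uniqueness of $\varphi$, and in the paper this case is not treated separately --- it is subsumed by the bigon argument above. Second, your all-or-nothing statement is weaker than what the conclusion requires: if some edge of $H(e)$ crossed all of $H(f)$ while another crossed none, then different choices of representative in your Step~3 would yield different $1$-immersions of $G^{(1)}$, and you could not conclude that the crossing pattern of $\psi$ agrees with that of $\varphi_k$. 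The paper proves the stronger statement that a single crossing between two bundles forces all $k^2$ pairs to cross; after that, deleting $k-1$ edges from each bundle gives $\varphi$ regardless of the choice, and $\psi=\varphi_k$ follows.
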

\begin{proof}
We consider an arbitrary plane $k$-immersion $\psi$ of $G^{(1)}(k)$ and show that $\psi$ is $\varphi_k$.

\begin{figure}[t!]
\centering
\includegraphics[width=0.78\textwidth]{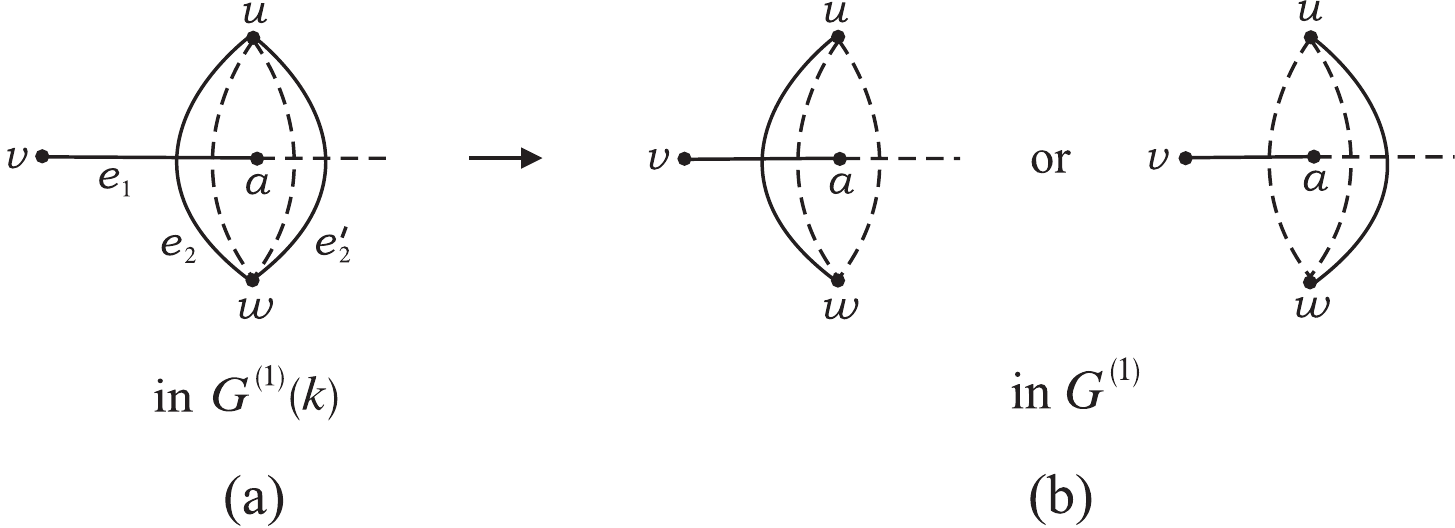}
\caption{Different plane 1-immersions of $G^{(1)}$.}
 \label{Fig.6.1}
  \end{figure}

First we show that if edges $e_1$ and $e_2$ of $G^{(1)}(k)$ cross in $\psi$, then each edge of $H(e_1)$ intersects every edge of $H(e_2)$. Suppose, for a contradiction, that an edge $e'_2$ of $H(e_2)$ does not intersect $e_1$ (see Fig.~\ref{Fig.6.1}(a)). Consider the 2-cell $D$ whose boundary consists of the edges $e_2$ and $e'_2$. Since $e_2$ and $e'_2$ can have at most $2k$ crossings in total, there are at most two vertices lying outside $D$ that are adjacent to vertices inside $D$. This means (see Fig.~\ref{Fig.6.1}(b)) that $G^{(1)}$ has two different plane 1-immersions (the edge of $G^{(1)}$ joining $u$ and $w$ has different positions in the two different plane 1-immersions), a contradiction. Hence, each edge of $H(e_1)$ intersects every edge of $H(e_2)$. Delete $k-1$ edges from every $k$ parallel edges. We obtain a plane 1-immersion of $G^{(1)}$, that is, $\varphi$. Hence $\psi$ is $\varphi_k$.
\end{proof}

The graph $\overline{G}(k)$ is obtained from $G^{(1)}(k)$ if we add the pending paths of $\overline{G}$ where every edge is replaced by $k$ parallel edges. Now, considering a pending path of an $h$-family, we have (see Fig.~\ref{Fig.6.2}, where each thick edge represents $k$ parallel edges) that if $e'\in H(e)$, then each of the edges $e$ and $e'$ is already crossed by $k$ edges of $H(e_1)$ and $H(e_2)$, respectively, thus the edges of the pending path incident with the vertex $v$ can not cross edges $e$ and $e'$, a contradiction. Hence all edges of $H(e)$ cross either $H(e_1)$ or $H(e_2)$. As a result, the $h$-families and the other pending paths of $\overline{G}(k)$ ``behave" in the same way as in $\overline{G}$. We conclude that $\overline{G}$ has a plane 1-immersion if and only if $\overline{G}(k)$ has a plane $k$-immersion. Since $\overline{G}$ has a plane 1-immersion if and only if $G$ has a proper 3-coloring, we get that $k$-planarity testing for multigraphs is NP-complete.

\begin{figure}[t!]
\centering
\includegraphics[width=0.6\textwidth]{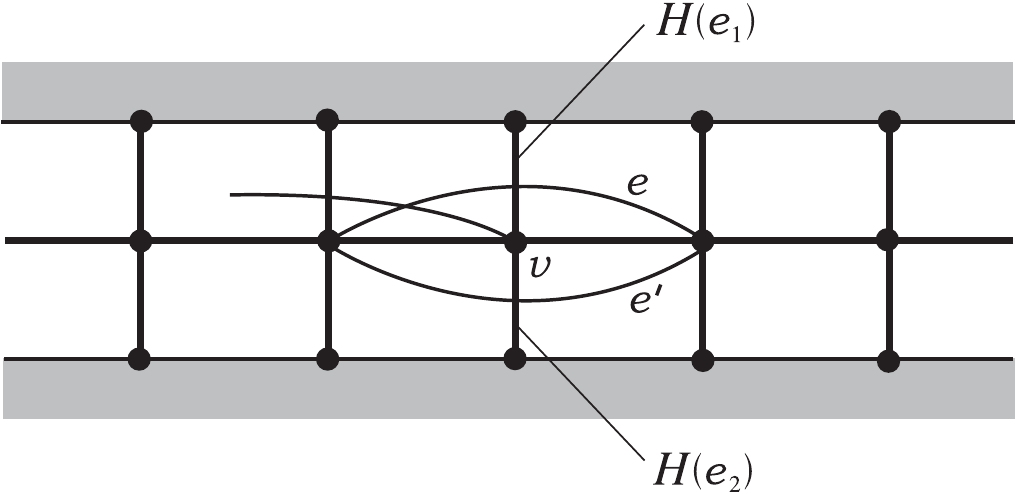}
\caption{Edges of an $h$-family in $\overline{G}(k)$.}
 \label{Fig.6.2}
  \end{figure}

If we restrict ourselves to simple graphs only, then to have a proof analogous to the proof of Theorem~\ref{thm:NPC1} we need simple graphs that have a unique plane $k$-immersion ($k\geq 2$), but the construction of such graphs seems to be nontrivial and does not readily follow from the construction of U-graphs in Sect.~\ref{sect:NPC}.

\subsection*{Acknowledgement.} The authors are grateful to anonymous referee for pointing out that our proof in Section \ref{sect:NPC} might be used to derive a corresponding result for $k$-immersions.


\begin{thebibliography}{99}

\small
\setlength\itemsep{0pt}

\bibitem{B1} O.V. Borodin, Solution of Ringel's problem
about vertex bound colouring of planar graphs and
colouring of 1-planar graphs, Metody Discret. Analiz.
41 (1984) 12--26. 

\bibitem{B2} O.V. Borodin, A new proof of the 6-color theorem,
J. Graph Th.~19 (1995) 507--521.

\bibitem{BKRS} O.V. Borodin, A.V. Kostochka, A.
Raspaud, E. Sopena, Acyclic colouring of 1-planar
graphs, Discrete Analysis and Operations Researcher 6
(1999) 20--35.

\bibitem{BSW} R. Bodendiek, H. Schumacher, and K. Wagner, Bemerkungen zu einen Sechsfarbenproblem von G. Ringel, Abh. Math. Sem. Univ. Hamburg 53 (1983) 41--52.

\bibitem{Ch1} Zhi-Zhong Chen, Approximation algorithms
for independent sets in map graphs, Journal of
Algorithms 41 (2001) 20--40.

\bibitem{Ch2} Zhi-Zhong Chen, New bounds on the number
of edges in a $k$-map graph, Lecture Notes in
Computer Science 3106 (2004) 319--328.

\bibitem{CGP} Zhi-Zhong Chen, Enory Grigni, Christos H. Papadimitriou,
Planar map graphs,
Proceedings of the thirtieth annual ACM symposium on Theory of computing,
May 24--26, 1998, Dallas, Texas, pp.~514--523.

\bibitem{CGP2} Zhi-Zhong Chen, M. Grigni, and C.~H. Papadimitriou,
Map graphs, J. ACM 49 (2002) 127--138.

\bibitem{CK} Zhi-Zhong Chen and M. Kouno, A linear-time
algorithm for 7-coloring 1-plane graphs, Algorithmica
43 (2005) 147--177.

\bibitem{FM} I. Fabrici and T. Madaras, The structure
of 1-planar graphs, Discrete Math. 307 (2007)
854--865.

\bibitem{GJS} M. R. Garey, D. S. Johnson, and L. Stockmeyer,
Some simplified NP-complete graph problems,
Theor.\ Comp.\ Sci. 1 (1976) 237--267.

\bibitem{Hl} P. Hlin\v{e}n\'y,
Crossing number is hard for cubic graphs,
J. Combin.\ Theory, Ser.~B 96 (2006) 455--471.

\bibitem{K} V.P. Korzhik, Minimal non-1-planar graphs,
 Discrete Math. 308 (2008) 1319--1327.

\bibitem{GD08} V.P. Korzhik and Bojan Mohar, Minimal obstructions for 1-immersions and hardness of 1-planarity testing, GD 2008, Lecture Notes in Computer Science 5417 (2009) 302-312.

\bibitem{R} G. Ringel, Ein Sechsfarbenproblem auf der
Kugel, Abh. Sem. Univ. Hamburg 29 (1965) 107--117.

\bibitem{S} H. Schumacher, Zur Struktur 1-planarer Graphen,
 Math. Nachr. 125 (1986) 291--300.

\end{thebibliography}
\end{document}